\newtheorem{theorem}{Theorem}[section]
\newtheorem{proposition}{Proposition}[section]
\newtheorem{remark}{Remark}[section]
\newtheorem{definition}{Definition}[section]
\title{A pseudospectral method for Option Pricing with Transaction Costs under Exponential Utility\thanks{Research supported by Spanish AEI/MINECO
under grant MTM2016-78995-P and by Spanish Junta de Castilla y Le\'{o}n (cofinanced by FEDER funds) under grant VA105G18. The second author acknowledges the support of Junta de Castilla y Le\'{o}n through grant LE103G18.}}
\author{Javier de Frutos\thanks{Instituto de Matem\'{a}ticas (IMUVA), Universidad de Valladolid, Paseo de Bel\'{e}n 7, Valladolid, Spain. e-mail:frutos@mac.uva.es} and V\'{\i}ctor Gat\'{o}n\thanks{Universidad de Leon and Instituto de Matem\'{a}ticas (IMUVA),  Callej\'{o}n Campus Vegazana, s/n, Le\'{o}n, Spain. e-mail:vgatb@unileon.es}}
\begin{document}

\maketitle

\begin{abstract} This paper concerns the design of a Fourier based pseudospectral numerical method for the model of European Option Pricing with transaction costs under Exponential Utility  derived by Davis, Panas and Zariphopoulou in \cite{Davis2}. Computing the option price involves solving two stochastic optimal control problems. With a Exponential Utility function, the dimension of the problem can be reduced, but one has to deal with high absolute values in the objective function. In this paper, we propose two changes of variables that reduce the impact of the exponential growth. We propose a Fourier pseudospectral method to solve the resulting non linear equation. Numerical analysis of the stability, consistency, convergence and localization error of the method are included. Numerical experiments support the theoretical results. The effect of incorporating transaction costs is also studied.

\

\textbf{Keywords:} {Option Pricing, Exponential Utility, Transaction costs, Spectral method.}
\end{abstract}

\section{Introduction}

This paper concerns the design of a pseudospectral numerical method for the model of European Option Pricing with transaction costs under Exponential Utility derived by Davis, Panas and Zariphopoulou in \cite{Davis2}. Let us consider a market form by a risky stock and a riskless bank account (or bond). When transaction costs are considered, the Black-Scholes strategy of a replicating portfolio, \cite{Black}, is unfeasible because it requires a continuous portfolio rebalancing with unbounded costs.

From the point of view of the seller, we can price the Option using a technique referred as ``Indifference Pricing'', \cite{Carmona} or \cite{Davis2}. We define an adequate function (strictly increasing and concave), which allows us to measure the utility of the wealth. For a fixed initial amount of money, we build two scenarios. In the first one, only the stock and the bond are considered and we solve an Optimal Investment problem under transaction costs. In the second one, we receive a certain amount $p_w$ for selling an option and, with the new total amount of money, we solve again the Optimal Investment problem including this time the obligation acquired when selling the option. The quantity $p_w$ that equals the expected terminal utility of both scenarios will be the price of the contract. The technique is also interesting because it reflects the no-linearity of the price in relation with the number of contracts negotiated, in contrast to the Black-Scholes model \cite{Carmona}.

Proportional transaction costs were first introduced in \cite{Magill}. In \cite{Davis2}, authors price European Options with transaction costs under Exponential Utility. This utility function gives tractable equations and it allows to reduce one of the dimensions of the problem, but it may give numerical difficulties in lognormal models due to the growth of the utility function. In the present paper, we propose two changes of variables to reduce the impact of the exponential growth. In spite of being non-linear, the resulting equation can be numerically solved efficiently with a Fourier pseudospectral method.

As it is well known, spectral methods (see \cite{Canuto}), are a class of spatial discretizations for partial differential equations with an order of convergence that depends only on the regularity of the function to be approximated. Several papers (see, for example, \cite{Breton}, \cite{Frutos} or \cite{FrutosGaton1}) have used spectral methods for problems in Finance with good results. For instance,  in  \cite{Chiarella} a Fourier-Hermite procedure to the valuation of american options is presented. In \cite{Breton2} the authors use an adaptive method with Chebyshev polynomials coupled with a dynamic programing procedure for contracts with early exercise features. A spectral procedure coupled with a reduced basis method is used in \cite{FrutosGaton2} to calibrate a high dimensional GARCH model. In \cite{Oosterlee} a very efficient procedure for asian options defined on arithmetic averages has been proposed. In all cases, the spectral-based methods have been proved to be competitive with other alternatives in terms of precision versus computing time needed to compute the numerical solution.

Theoretical results analyzing stability, consistency, convergence and localization error of the pseudospectral method are included. When transaction costs disappear, and all risks become again hedgeable, the replication price, i.e., the Black-Scholes price, must be recovered, \cite{Carmona}, \cite{Davis2}. We use this fact to check the precision and efficiency of the pseudospectral method.

The outline of the paper is as follows. In Section \ref{OPTCEUtM} a description of the model as it can be found in \cite{Davis2} is presented. In Section \ref{Ch4intrbankstate}, the problem is equivalently reformulated for technical reasons. Section \ref{OPTCEUtPm} is devoted to the two changes of variables and the development of a Fourier pseudospectral method to solve the new non-linear partial differential equation. A theoretical analysis of the method is included. Section \ref{OPTCEUmda} is devoted to the numerical analysis. The effect of incorporating transaction costs will also be studied. In order to not overload the paper, the proof of all the theoretical results are included in the appendix.

\section{The model}\label{OPTCEUtM}

We consider the European Option pricing problem with transaction costs \cite{Davis2}. Let $(\Omega,\mathcal{F},P)$ be a filtered probability space. Let us consider an investor who holds amount $\bar{X}(t)$ in the bank account and $\bar{y}(t)$ shares of a certain stock $\bar{S}(t)$. The dynamics of the processes is
\begin{equation}\label{Ch4modeldynamecu}
\left\{
\begin{aligned}
d\bar{X}(t)&=r\bar{X}(t)dt-(1+\lambda)\bar{S}(t)dL(t)+(1-\mu)\bar{S}(t)dM(t), \\
d\bar{y}(t)&=dL(t)-dM(t), \\
d\bar{S}(t)&=\bar{S}(t)\alpha dt+\bar{S}(t)\sigma dz_t,
\end{aligned}
\right.
\end{equation}
where $r$ denotes the constant risk-free rate, $\alpha$ is the constant expected rate of return of the stock, $\sigma$ > 0 is the constant volatility of the stock, and $z_t$ is a is a standard Brownian motion such that $\mathscr{F}^{z}_t\subseteq\mathscr{F}$ where $\mathscr{F}^{z}_t$ is the natural filtration induced by $z_t$. We suppose that $L(t)$ and $M(t)$ are adapted, right-continuous, nonnegative and nondecreasing processes representing the cumulative number of shares bought and sold respectively. $\lambda\ge 0$ and $0\le\mu<1$, represent the constant proportional transaction costs incurred on the purchase or sale of the stock.

The investor may borrow from the bank at interest rate $r$ and $y\in \mathbb{R}$, so long and short positions are both accepted. The liquidated cash value of a portfolio, denoted by $c(y,S)$, is given by:
\begin{equation}
c(y,S)=(1-\mu)Sy, \quad \text{if} \ y\geq0,
\end{equation}
if the investor is long in the stock or
\begin{equation}
c(y,S)=(1+\lambda)Sy, \quad \text{if} \ y<0,
\end{equation}
in case the investor is short. Let $T$ be a fixed maturity, when our investor has to liquidate his portforlio. We consider \cite{Davis2}, two different scenarios.

In \textbf{Scenario $j=1$}, the investor holds money in the bank account and in shares, but he/she has not sold an option. At maturity, the net wealth of the investor $W_1(T)$ is given by:
\begin{equation}\label{Ch4problnumcontratos1}
W_1(T)=\bar{X}(T)+c(\bar{y}(T),\bar{S}(T)).
\end{equation}

In \textbf{Scenario $j=w$}, prior to enter into the market, the investor has sold an European Option with strike $K$ and maturity $T$. The net wealth of the investor at maturity, $W_w(T)$, is:
\begin{equation}\label{Ch4problnumcontratos2}
W_w(T)=
\left\{
\begin{aligned}
& \bar{X}(T)+c(\bar{y}(T),\bar{S}(T)),  &&  \text{if} \ \bar{S}(T)<K, \\
& \bar{X}(T)+ K +c(\bar{y}(T)-1,\bar{S}(T)), &&  \text{if} \ \bar{S}(T)\geq K,
\end{aligned}
\right.
\end{equation}
which corresponds to the net value of the portfolio if the option is not exercised (respectively the net value minus one share plus the strike value if the option is exercised).

Given an election of an utility function $U(w)$, that is a continuous, strictly increasing and concave function, and for a position $(\bar{X}(t),\bar{y}(t))=(X,y)$, the optimal value function  is given by:
\begin{equation}\label{Ch4defvaluefunfor}
V_j(t,X,y,S)=\underset{\pi\in\tau(X,y)}{\sup}\mathbb{E}\left\{\left.U(W_j(T)\right|\left(\bar{X}(t),\bar{y}(t),\bar{S}(t)\right)=\left(X,y,S\right)\right\},
\end{equation}
where $(t,X,y,S)\in [0,T]\times \mathbb{R}\times \mathbb{R}\times \mathbb{R^{+}}$ and $j\in\{1,w\}$. From now on, we assume \cite{Davis2}, that $U(w)$ is the exponential utility function
\begin{equation}\label{Ch4utilfun}
U(x)=1-\exp(-\gamma x).
\end{equation}
for some $\gamma>0$ and where we note that $\gamma=-\frac{U''(x)}{U'(x)}$, the index of risk aversion, is independent of the investor's wealth. Set $\tau(X,y)$ corresponds to the set of admissible trading strategies and it is defined in Subsection \ref{OPTCEUtMExandun}, where we discuss the existence and uniqueness of a solution.

The Optimal Investment problems $V_j, \ j\in\{1,w\}$ can be solved for any initial position but, when we want to price an option, for simplicity we assume, \cite{Davis2}, that prior to enter into the market, the position of an investor is always a certain amount of money in the bank account $\bar{X}(t^-)=X$ and no holdings in the stock $\bar{y}(t^{-})=0$.

The indifferent price $p_w(X,t,S)$ of one European Option for an investor with an initial position $(\bar{X}(t^{-}),\bar{y}(t^{-}))=(X,0)$ is the price which leaves him indifferent between not selling an option ($j=1$) or selling one option ($j=w$) for an amount $p_w(X,t,S)$, i.e. the quantity which equals
\begin{equation}\label{Ch4indftpr}
V_1(t,X,0,S)=V_w\left(t,X+p_w(X,t,S),0,S\right).
\end{equation}

\begin{remark}
\normalfont{
The investor may sell one or $n$ European Options. The indifferent price is not linear in the number of contracts (see \cite{Carmona}) but, for simplicity, the problem is solved for just one contract.

The development is identical substituting 1 by $n$ in formula (\ref{Ch4problnumcontratos2}).}
\end{remark}

After obtaining the Hamilton-Jacobi-Bellman equations (see \cite{Davis2}) associated with the two stochastic control problems $j\in\{1,w\}$, the results suggest that the optimization problem is a free boundary problem given by
\begin{equation}\label{Ch4ecupdeDaviscomple}
\begin{aligned}
\max & \left\{\frac{\partial V_j}{\partial y}-(1+\lambda)S\frac{\partial V_j}{\partial X},-\left(\frac{\partial V_j}{\partial y}-(1-\mu)S\frac{\partial V_j}{\partial X}\right),\right. \\
& \left. \frac{\partial V_j}{\partial t}+rX\frac{\partial V_j}{\partial X}+\alpha S\frac{\partial V_j}{\partial S}+\frac{1}{2}\sigma^2 S^2 \frac{\partial V_j}{\partial S^2}\right\}=0,
\end{aligned}
\end{equation}
subject to
\begin{equation}
V_j(T,X,y,S)=U\left(W_j(T)\right),
\end{equation}
where $(t,X,y,S)\in [0,T]\times \mathbb{R}\times \mathbb{R}\times \mathbb{R^{+}}$. The existence and uniqueness of a solution is discussed in Subsection \ref{OPTCEUtMExandun}.

Under the Exponential Utility, it can be proved (see \cite{Davis2}) that the value function given by (\ref{Ch4defvaluefunfor}) can be rewritten as:
\begin{equation}\label{Ch4obtencQ}
V_j(t,X,y,S)=1-\exp\left(-\gamma\frac{X}{\delta(T,t)}\right)Q_j(t,y,S),
\end{equation}
where $Q_j(t,y,S)$ is a convex nonincreasing continuous function in $y$ and $S$ given by
\begin{equation}\label{relorigyqdavis}
Q_j(t,y,S)=1-V_j(t,0,y,S).
\end{equation}

This result has a very important interpretation: ``The amount invested in the risky asset is independent of the total wealth.''

The indifferent price $p_w(X,t,S)$ given by (\ref{Ch4indftpr}) can be explicitly computed with (\ref{Ch4obtencQ}) and is given by
\begin{equation}\label{Ch4indffpricenocosts}
p_w(X,t,S)=\frac{\delta(T,t)}{\gamma}\log\left(\frac{Q_w(t,0,S)}{Q_1(t,0,S)}\right).
\end{equation}
where note that it is independent of the initial wealth $p_w(X,t,S)=p_w(t,S)$.

Substituting (\ref{Ch4obtencQ}) into the partial differential equation (\ref{Ch4ecupdeDaviscomple}), we obtain:
\begin{equation}\label{Ch4ecupdedefin}
\begin{aligned}
\min & \left\{\frac{\partial Q_j}{\partial y}+\frac{\gamma(1+\lambda)S}{\delta(T,t)}Q_j,-\left(\frac{\partial Q_j}{\partial y}+\frac{\gamma(1-\mu)S}{\delta(T,t)}Q_j\right), \right. \\
& \left. \frac{\partial Q_j}{\partial t}+\alpha S\frac{\partial Q_j}{\partial S}+\frac{1}{2}\sigma^2 S^2 \frac{\partial Q_j}{\partial S^2}\right\}=0,
\end{aligned}
\end{equation}
defined in $[0,T]\times\mathbb{R}\times \mathbb{R^{+}}$. The terminal conditions are given by:
\begin{equation}\label{Ch4ecupdedefinmatcon1}
Q_1(T,y,S)=\exp(-\gamma c(y,S)),
\end{equation}
and
\begin{equation}\label{Ch4ecupdedefinmatcon2}
Q_w(T,y,S)=\exp\left(-\gamma \left(I_{(S< K)}c(y,S)+I_{(S\geq K)}\left[c(y-1,S)+K\right]\right)\right).
\end{equation}

We conjecture, as in \cite{Davis2}, that the space is divided by (\ref{Ch4ecupdedefin}) in three regions:
\noindent \textbf{1.} The Buying Region (BR), where the value function satisfies
\begin{equation}\label{Ch4ecdeffBR}
\frac{\partial Q_j}{\partial y}+\frac{\gamma(1+\lambda)S}{\delta(T,t)}Q_j=0,
\end{equation}
\noindent \textbf{2.} The Selling Region (SR), where the value function satisfies
\begin{equation}\label{Ch4ecdeffSR}
-\left(\frac{\partial Q_j}{\partial y}+\frac{\gamma(1-\mu)S}{\delta(T,t)}Q_j\right)=0,
\end{equation}
\noindent \textbf{3.} The No Transactions Region (NT), where the value function is the solution of the following partial differential equation:
\begin{equation}\label{Ch4ecdeffNT}
\frac{\partial Q_j}{\partial t}+\alpha S\frac{\partial Q_j}{\partial S}+\frac{1}{2}\sigma^2 S^2 \frac{\partial Q_j}{\partial S^2}=0.
\end{equation}

The Buying and Selling regions do not intersect, since it is not optimal to buy and sell shares at the same time, laying the No Transactions region between them. The Buying (resp. Selling) frontier is denoted by $y^{\mathscr{B}}_j(t,S)$ (resp. $y^{\mathscr{S}}_j(t,S)$), $j\in\{1,w\}$.

If we are located inside the Buying (resp. Selling) Region, the optimal trading strategy is to immediately buy (resp. sell) shares until reaching the Buying (resp. Selling) frontier.

If the Buying $y^{\mathscr{B}}_j(t,S)$ and Selling $y^{\mathscr{S}}_j(t,S)$ frontiers are known, we can compute the value function  $Q_j(t,y,S), \ j\in\{1,w\}$ explicitly by a simple integration of equations (\ref{Ch4ecdeffBR}) and (\ref{Ch4ecdeffSR}) respectively. If $y\leq y^{\mathscr{B}}_j(t,S)$,
\begin{equation}\label{Ch4ecuzonacompra}
Q_j(t,y,S)=Q_j(t,y^{\mathscr{B}}_j(t,S),S)\exp\left(-\frac{\gamma(1+\lambda)S}{\delta(T,t)}(y-y^{\mathscr{B}}_j(t,S))\right),
\end{equation}
and if $y\geq y^{\mathscr{S}}_j(t,S)$
\begin{equation}\label{Ch4ecuzonaventa}
Q_j(t,y,S)=Q_j(t,y^{\mathscr{S}}_j(t,S),S)\exp\left(\frac{\gamma(1-\mu)S}{\delta(T,t)}(y^{\mathscr{S}}_j(t,S)-y)\right).
\end{equation}
where note that $Q_j(t,y,S), \ j\in\{1,w\}$ is determined in BR (resp. SR) upon the knowledge of $Q_j(t,y^{\mathscr{B}}_j(t,S),S)$ (resp. $Q_j(t,y^{\mathscr{S}}_j(t,S),S)$).

\subsection{Existence and uniqueness of a viscosity solution}\label{OPTCEUtMExandun}

Let $t\in[t_0,T]$. The set of admissible strategies $\tau_E(X_{t_0},y_{t_0})$ consists of the two dimensional, right-continuous, measurable processes $(X^{\pi}(t),y^{\pi}(t))$ which are the solution of (\ref{Ch4modeldynamecu}), corresponding to some pair of right-continuous, measurable $\mathcal{F}_t$-adapted, increasing processes $(L(t),M(t))$ such that
\begin{equation*}
\left\{
\begin{aligned}
& \bar{X}(t_0^{-})=X_{t_0}, \ \bar{y}(t_0^{-})=y_{t_0}, \\
& (X^{\pi}(t),y^{\pi}(t),\bar{S}(t))\in \mathscr{E}_{E}, \quad \forall t\in[t_0,T]
\end{aligned}
\right.
\end{equation*}
where  $E>0$ is a constant which may depend on the policy $\pi$ and
\begin{equation}\label{constraint}
\mathscr{E}_{E}=\left\{(X,y,S)\in \mathbb{R} \times \mathbb{R} \times \mathbb{R}^{+}: \left(x+c(y-1,S)\right)e^{r(T-t)}>-E, \ t\in[t_0,T]\right\},
\end{equation}
where $X$, $y$ and $S$ respectively denote the money in the bank account, the number of shares and the stock price. By convention, $L(t_0^{-})=M(t_0^{-})=0$ but $L(t_0)$ or $M(t_0)$ may be positive.

\begin{remark}
\normalfont{
$\mathscr{E}_{E}$ was originally defined as $\mathscr{E}^{*}_{E}$ by \cite[(4.6)]{Davis2}. Our definition does not alter the results from \cite{Davis2}, but we have been a bit more restrictive ($\mathscr{E}_{E}\subset\mathscr{E}^{*}_{E}$), just to ensure that the trading strategies
\begin{equation*}
\left\{
\begin{aligned}
& y^{\pi}(t)\equiv 0, \quad \text{(no option was sold)}, \\
& y^{\pi}(t)\equiv 1, \quad \text{(one option was sold)},
\end{aligned}
\right.
\end{equation*}
are both admissible for any initial position $(t,X,y,S)\in[t_0,T]\times\mathscr{E}_{E}$. }
\end{remark}

For $(t,X,y,S)\in [0,T]\times{\mathscr{E}_{E}}$, we define the value function as:
\begin{equation}\label{Ch4defvaluefunresdom}
V^{\mathscr{E}_{E}}_j(t,X,y,S)=\underset{\pi\in\tau_E(X,y)}{\sup}\mathbb{E}\left\{\left.U(W_j(T)\right|\left(\bar{X}(t),\bar{y}(t),\bar{S}(t)\right)=\left(X,y,S\right)\right\}.
\end{equation}

In \cite{Davis2} it is proved that for $(t,X,y,S)\in [0,T]\times{\mathscr{E}_{E}}$,  (\ref{Ch4defvaluefunresdom}) is the unique viscosity solution of $(\ref{Ch4ecupdeDaviscomple})$.

We assume, as in \cite{Davis2}, that fixed an initial position $(t_0,X_0,y_0,S_0)$, the value of $V^{\mathscr{E}_{E}}_j(t_0,X_0,y_0,S_0)$ does not depend on the particular choice of $E$ for $E\geq E_0$ big enough. This means that, although the set of allowed trading strategies increases with the value of $E$, we obtain the same result. In \cite{Davis2} it was argued that this occurred because constraint $\mathscr{E}_{E}$ only ruled out suboptimal trading strategies. This may be a consequence of the particular choice of Exponential Utility, which makes strategies wealth-independent (something that can be explicitly checked when there are no transaction costs).

Furthermore, it can be proved that for $(t_0,X_0,y_0,S_0)$ fixed, the value of $V^{\mathscr{E}_{E}}_j(t_0,X_0,y_0,S_0)$ is an increasing but bounded function of $E$. Without entering in technical details, we sketch the idea of the proof. It is an increasing function since the set of allowed trading strategies increases with the value of $E$, so the result will be equal or better. It is a bounded function because the value function of the no transaction costs model (which is explicitly computable and finite) is always an upper bound.

This result, and the numerical experiments, strongly suggest that the assumption made in \cite{Davis2} is correct.

Under this assumption, for $E\geq E_0$ big enough, $V_j(t,X,y,S)$ can be unambiguously defined for any $(t,X,y,S)\in[t_0,T]\times \mathbb{R}\times \mathbb{R}\times \mathbb{R}^{+}$. Based on this, for simplicity in the numerical scheme, we drop the dependance on ${\mathscr{E}_{E}}$ in definition (\ref{Ch4defvaluefunfor}), although for the theoretical results we need to employ (\ref{Ch4defvaluefunresdom}).

\section{Restatement of the problem: Bankruptcy state}\label{Ch4intrbankstate}

In order to analyze the localization error of the pseudospectral method that we are going to propose, we need functions $V^{\mathscr{E}_{E}}_j, \ j\in\{1,w\}$ to be defined in $[0,T]\times\mathbb{R}\times\mathbb{R}\times\mathbb{R}^{+}$. In order to achieve this, we restate the problem, but in a way which preserves the original development.

When an European option is signed (or other derivative), the market (Clearing House), acts as a central counterparty which mediates between the seller and the buyer of the option. The Clearing House checks if the seller of the option can afford all the potential loses that he might have incurred between $[0,t]$, even if the European option cannot be exercised prior to time $T$. Furthermore, if the seller has gone into theoretical bankruptcy at any time $t\in[0,T]$, the Clearing House can confiscate his goods and expel him from the market (see, for example, \cite{BME}).

Simplifying the situation, constraint ${\mathscr{E}_{E}}$ could be understood as a bankruptcy constraint. We allow any trading strategy to the seller of the option but, if at any time $t$ his strategy has led him out outside ${\mathscr{E}_{E}}$, he is automatically expelled from the market, not allowing him to return, and he remains with a residual bankruptcy utility forever. Retaining the previous definitions, we introduce two new value functions.

Let $E>0$, $t\in[0,T]$ and $j\in\{1,w\}$. The value functions are given by
\begin{equation}\label{Ch4defvaluefunbank}
V^{B_E}_j(t,X,y,S)= \underset{\pi\in\tau(X,y)}{\sup}\mathbb{E}\left\{\left.U(W_j(T)\right|\left(\bar{X}(t),\bar{y}(t),\bar{S}(t)\right)=(X,y,S)\right\}, \end{equation}
if $(X,y,S)\in {\mathscr{E}_{E}}$ and by
\begin{equation}\label{Ch4defvaluefunbank2}
V^{B_E}_j(t,X,y,S)=1-\exp(\gamma E),
\end{equation}
otherwise. Set $\tau(X,y)$ denotes that we allow any trading strategy. These new value functions are defined in $[0,T]\times\mathbb{R}\times\mathbb{R}\times\mathbb{R}^{+}$ and they do not alter the model thanks to the following result (the proofs are in the appendix).

\begin{proposition}\label{Ch4equivalencia}
If $(t,X,y,S)\in[0,T]\times \mathscr{E}_{E}$, it holds that
\begin{equation*}
V^{B_E}_j(t,X,y,S)=V^{\mathscr{E}_{E}}_j(t,X,y,S).
\end{equation*}
\end{proposition}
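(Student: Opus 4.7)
The proof plan is to establish the two inequalities. For $V^{B_E}_j \geq V^{\mathscr{E}_E}_j$ on $\mathscr{E}_E$, I would note that $\tau_E(X,y)\subseteq \tau(X,y)$; moreover, for any $\pi\in\tau_E(X,y)$ the trajectory $(X^{\pi}(t),y^{\pi}(t),S(t))$ remains in $\mathscr{E}_E$ throughout $[t_0,T]$, so the bankruptcy branch (\ref{Ch4defvaluefunbank2}) is never triggered and the expectation in (\ref{Ch4defvaluefunbank}) coincides with the one in (\ref{Ch4defvaluefunresdom}). Taking the supremum over the larger set $\tau(X,y)$ then yields the inequality.

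For the reverse inequality I would use a stop-and-retreat modification. Given $\pi\in\tau(X,y)$, let $\tau_E^{\pi}=\inf\{t\in[t_0,T]:(X^{\pi}(t),y^{\pi}(t),S(t))\notin\mathscr{E}_E\}$ (with $\tau_E^{\pi}=+\infty$ if no exit occurs). Define $\tilde\pi$ to coincide with $\pi$ on $[t_0,\tau_E^{\pi})$ and, from $\tau_E^{\pi}$ onward, to execute a single rebalancing trade removing exposure to $S$ and then to halt all further trading (i.e.\ $dL=dM=0$, so $y$ is frozen and $X$ accrues interest at rate $r$). On $\{\tau_E^{\pi}\geq T\}$ the two strategies agree and the terminal wealths are identical. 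On $\{\tau_E^{\pi}<T\}$ I would argue that $\tilde\pi$ stays inside $\mathscr{E}_E$ until $T$, so at the terminal time the defining inequality $(X(T)+c(y(T)-1,S(T)))>-E$ together with a short case analysis on $j\in\{1,w\}$ and on whether $S(T)\geq K$ forces $W_j(T)\geq -E$, hence $U(W_j(T))\geq 1-e^{\gamma E}$. This shows that the expected utility produced by $\tilde\pi$ dominates the $V^{B_E}$-expectation of $\pi$ path by path, and passing to the supremum over $\pi$ closes the argument.

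The main obstacle is the verification that the retreat actually keeps $\tilde\pi\in\tau_E$: after $\tau_E^{\pi}$ the stock still diffuses and the discount factor $e^{r(T-t)}$ continues to shrink the margin in the definition of $\mathscr{E}_E$, so even a frozen position can in principle be pushed out again by an unfavourable move of $S$. The cleanest way to neutralize this is to choose the rebalancing so as to zero out the exposure to $S$ (for instance, moving to $y=0$ in scenario $j=1$ and to $y=1$ in scenario $j=w$, after which $c(y-1,S)$ no longer depends on $S$), and then to invoke the paper's standing assumption that $V_j^{\mathscr{E}_E}$ is insensitive to $E$ for $E\geq E_0$ large enough: any constant-size margin lost in the rebalancing can be absorbed into a harmless enlargement of $E$, which does not alter the value function. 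This reduces the statement to a purely algebraic comparison at $t=T$, exactly the one performed in the previous paragraph.
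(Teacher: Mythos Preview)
Your two-inequality plan is exactly the paper's argument, only spelled out more carefully: the paper simply observes that a fixed ``safe'' strategy ($y\equiv 0$ for $j=1$, $y\equiv 1$ for $j=w$) lies in $\tau_E(X,y)$ and yields terminal utility strictly above the bankruptcy level $1-e^{\gamma E}$, so any $\pi\in\tau(X,y)$ that triggers bankruptcy on some event is dominated. Your stop-and-retreat construction makes this domination explicit path by path, which is a genuine improvement in rigour over the paper's terse ``is always suboptimal''.

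One concrete slip in your obstacle paragraph: moving to $y=0$ does \emph{not} make $c(y-1,S)$ independent of $S$, since $c(-1,S)=-(1+\lambda)S$. It is the choice $y=1$ that kills the $S$-dependence, because $c(0,S)=0$. With that correction the obstacle dissolves cleanly and the detour through the $E$-insensitivity assumption is unnecessary (and would be mildly circular, since that insensitivity is itself only an assumption in the paper). Indeed, rebalancing from $(X,y,S)\in\mathscr{E}_E$ to $y=1$ changes cash to $X_{\mathrm{new}}=X+c(y-1,S)$, so the constraint value $(X_{\mathrm{new}}+c(0,S))e^{r(T-t)}=(X+c(y-1,S))e^{r(T-t)}$ is exactly the pre-trade value, hence $>-E$; thereafter $X_{\mathrm{new}}e^{r(T-t)}$ is constant in $t$. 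Thus the retreat to $y=1$ keeps $\tilde\pi$ in $\tau_E$ for both $j\in\{1,w\}$, and a direct check gives $W_j(T)>-E$ in either scenario, which is the pathwise comparison you need.
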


Thanks to Proposition \ref{Ch4equivalencia}, we inherit all the existence and uniqueness results of the original development of the model in \cite{Davis2}. We mention that the state space which corresponds to (\ref{Ch4defvaluefunbank})-(\ref{Ch4defvaluefunbank2}) is divided in four regions, not in three as in \cite{Davis2}. The forth state corresponds to the bankruptcy state but, since the investor has been expelled from the market, in this region no trading strategy has to be obtained.

Similar to the model presented in \cite{Davis2}, we are interested in the limit value of the functions when $E\rightarrow \infty$. Again, thanks to Proposition \ref{Ch4equivalencia}, we can make the same assumption as before, i.e. that the value of the objective functions does not depend of $E$ for $E>E_0$ big enough.

Since the option price is independent of the initial wealth, we will work numerically with a function $Q_j(t,y,S)$ derived of formula (\ref{Ch4obtencQ}) from $V_j=V^{\mathscr{E}_{E}}_j=V^{B_E}_j, \ j \in\{1,w\}$, when $E$ is considered big enough. Let us fix $X=X_0$. We apply formula (\ref{Ch4obtencQ}) to functions $V^{B_E}_j$ in order to obtain functions that we will denote by  $Q^{B_{E,X_0}}_j$. It is clear that $Q_j=Q^{\mathscr{E}_{E,X_0}}_j=Q^{B_{E,X_0}}_j, \ j\in\{1,w\}$ when $E$ is considered big enough. The following result will be employed in the analysis of the localization error in Subsection \ref{OPTCEUCSCE}.

\begin{proposition}\label{Ch4Qbounded}
For $X=X_0$ and $E=E_0$ fixed, it exists $M=M(X_0,E_0)\geq 0$ such that $\forall (t,y,S),\in[0,T]\times\mathbb{R}\times\mathbb{R}^{+}$ it holds
\begin{equation*}
0< Q^{B_{E_0,X_0}}_j \leq M, \quad j\in\{1,w\}.
\end{equation*}
\end{proposition}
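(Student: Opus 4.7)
The plan is to exploit the piecewise definition of $V^{B_{E_0}}_j$ together with Proposition \ref{Ch4equivalencia} to bound
$$Q^{B_{E_0,X_0}}_j(t,y,S) = \exp\left(\gamma\frac{X_0}{\delta(T,t)}\right)\bigl(1 - V^{B_{E_0}}_j(t,X_0,y,S)\bigr)$$
separately in the bankruptcy region and its complement. The prefactor $\exp(\gamma X_0/\delta(T,t))$ is continuous on the compact interval $[0,T]$ with $\delta(T,t)$ bounded and bounded away from zero, so it admits a uniform bound $C(X_0)>0$ on $[0,T]$; the problem therefore reduces to showing $0 < 1-V^{B_{E_0}}_j \le \exp(\gamma E_0)$ uniformly in $(t,y,S)$.

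Outside $\mathscr{E}_{E_0}$, definition (\ref{Ch4defvaluefunbank2}) immediately yields $1-V^{B_{E_0}}_j = \exp(\gamma E_0)$ identically, giving both bounds for free. Inside $\mathscr{E}_{E_0}$, Proposition \ref{Ch4equivalencia} lets me replace $V^{B_{E_0}}_j$ by $V^{\mathscr{E}_{E_0}}_j$. For the upper bound on $1-V^{\mathscr{E}_{E_0}}_j$, I will pick any admissible $\pi\in\tau_{E_0}(X_0,y)$ (the set is nonempty by the remark following (\ref{constraint})) and use the admissibility constraint evaluated at $t=T$, namely $\bar X(T)+c(\bar y(T)-1,\bar S(T))>-E_0$. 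Since $c(\cdot,S)$ is nondecreasing in its first argument and $K\ge 0$, a short case analysis on the formulas (\ref{Ch4problnumcontratos1})--(\ref{Ch4problnumcontratos2}) for $W_j(T)$ gives $W_j^{\pi}(T)>-E_0$ almost surely, hence $U(W_j^{\pi}(T)) > 1-\exp(\gamma E_0)$ a.s., so $\mathbb{E}[U(W_j^{\pi}(T))]\ge 1-\exp(\gamma E_0)$, and taking the supremum delivers $V^{\mathscr{E}_{E_0}}_j \ge 1-\exp(\gamma E_0)$.

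For the strict positivity $1-V^{\mathscr{E}_{E_0}}_j>0$, I will compare with the unconstrained value function of (\ref{Ch4defvaluefunfor}): since $\tau_{E_0}\subseteq\tau$, we have $V^{\mathscr{E}_{E_0}}_j \le V_j$, and the exponential-utility decomposition (\ref{Ch4obtencQ})--(\ref{relorigyqdavis}) together with the positivity of $Q_j$ established in \cite{Davis2} gives $V_j<1$ strictly. Combining the two regions then yields $0 < Q^{B_{E_0,X_0}}_j \le C(X_0)\exp(\gamma E_0)$, so one may set $M(X_0,E_0) := C(X_0)\exp(\gamma E_0)$. The main technical obstacle I expect is the almost-sure bound $W_j^{\pi}(T)>-E_0$, which requires splitting by the sign of $\bar y(T)$ in the definition of $c$ and by $\{\bar S(T)\ge K\}$ versus $\{\bar S(T)<K\}$ in scenario $j=w$; the rest of the argument is bookkeeping.
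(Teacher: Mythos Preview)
Your proof is correct and follows essentially the same route as the paper's: split by whether $(X_0,y,S)\in\mathscr{E}_{E_0}$, use the definition outside, bound $V^{B_{E_0}}_j\ge 1-\exp(\gamma E_0)$ inside, and then invoke formula~(\ref{Ch4obtencQ}) together with the strict positivity of $Q_j$ from \cite{Davis2}. The only difference is in how the lower bound on $V$ is obtained inside $\mathscr{E}_{E_0}$: the paper fixes the specific ``safe'' strategies $y^{\pi^s}\equiv 0$ (for $j=1$) and $y^{\pi^s}\equiv 1$ (for $j=w$) and computes that their terminal wealth exceeds $-E_0$, whereas you read the bound $W_j^{\pi}(T)>-E_0$ directly off the admissibility constraint (\ref{constraint}) evaluated at $t=T$, which works for \emph{every} $\pi\in\tau_{E_0}$ once one observes $c(\cdot,S)$ is nondecreasing and $K\ge 0$. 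Your variant is arguably a touch cleaner since it avoids tracking the cash evolution under $\pi^s$, but the two arguments are equivalent in spirit and yield the same constant $M=C(X_0)\exp(\gamma E_0)$.
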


\section{Numerical Method}\label{OPTCEUtPm}

The procedure is as follows: First, we perform two changes of variables and compute the corresponding equations. The second step is the localization of the problem. We fix a finite domain and perform an odd-even extension, imposing periodic boundary conditions. Finally, we propose a Fourier Pseudospectral method to solve the partial differential equation. All the steps are summarized in the numerical algorithm in Subsection $\ref{OPTCEUtPmDPNV}$. For finishing, we include a theoretical analysis of the stability and convergence of the pseudospectral method as well as an analysis of the localization error.

\subsection{Change of variables.}

First, we change the stock price to logarithmic scale.
\begin{equation}\label{Ch4logscale}
\hat{x}=\log(S).
\end{equation}
and then consider a new function $H_j(t,y,\hat{x})$ defined by:
\begin{equation}\label{Ch4logfunc}
H_j(t,y,\hat{x})=\log \left(Q_j(t,y,\hat{x})\right), \quad j\in\{1,w\},
\end{equation}
which is admissible after Proposition \ref{Ch4Qbounded}.

In the Buying region, $y\leq y^{\mathscr{B}}_j(t,\hat{x})$, equation (\ref{Ch4ecdeffBR}) becomes
\begin{equation}\label{Ch4ecuzonacompranolin}
H_j(t,y,\hat{x}) =H_j(t,y^{\mathscr{B}}_j(t,\hat{x}),\hat{x})+\left(-\frac{\gamma(1+\lambda)\exp(\hat{x})}{\delta(T,t)}(y-y^{\mathscr{B}}_j(t,\hat{x}))\right),
\end{equation}
and in the Selling region,  $y\geq y^{\mathscr{S}}_j(t,\hat{x})$, equation (\ref{Ch4ecdeffSR}) becomes
\begin{equation}\label{Ch4ecuzonaventanolin}
H_j(t,y,\hat{x}) =H_j(t,y^{\mathscr{S}}_j(t,\hat{x}),\hat{x})+\left(\frac{\gamma(1-\mu)\exp(\hat{x})}{\delta(T,t)}(y^{\mathscr{S}}_j(t,\hat{x})-y)\right),
\end{equation}

Equation (\ref{Ch4ecdeffNT}), which corresponds to not performing transactions, has to be numerically solved and is given by
\begin{equation}\label{Ch4ecunolinealdef}
\frac{\partial H_j}{\partial t}+\left(\alpha-\frac{\sigma^2}{2}\right)\frac{\partial H_j}{\partial \hat{x}}+\frac{1}{2}\sigma^2  \frac{\partial^2 H_j}{\partial \hat{x}^2}+\frac{1}{2}\sigma^2  \left(\frac{\partial H_j}{\partial \hat{x}}\right)^2=0, \quad j\in\{1,w\},
\end{equation}

The value function at maturity is given by $H_1(T,y,\hat{x})=$
\begin{equation}\label{Ch4pde2camfin1}
-\gamma c(y,\exp(\hat{x})),
\end{equation}
and $H_w(T,y,\hat{x})=$
\begin{equation}\label{Ch4pde2camfin2}
-\gamma \left(I_{(\exp(\hat{x})< K)}c(y,\exp(\hat{x}))+I_{(\exp(\hat{x})\geq K)}\left[c(y-1,\exp(\hat{x}))+K\right]\right).
\end{equation}

\begin{figure}[h]
\centering
\includegraphics[width=12cm,height=5 cm]{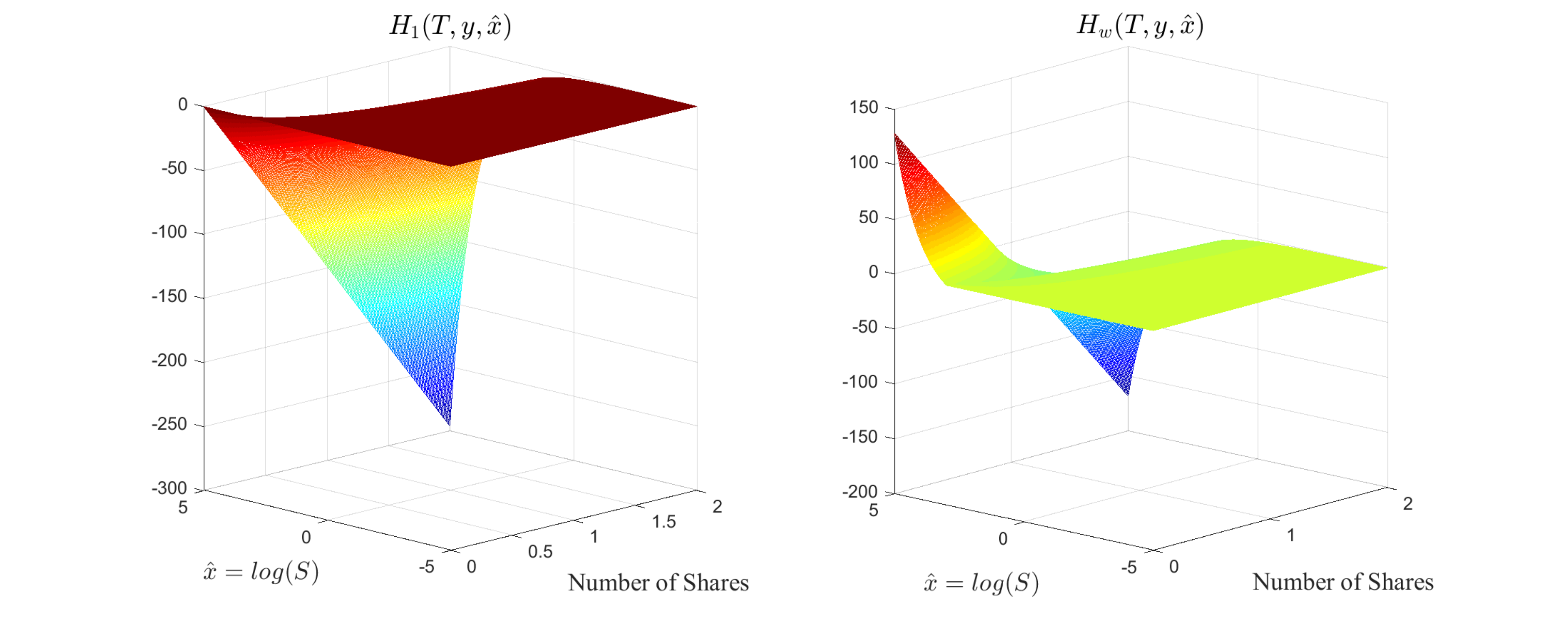}
\caption[$H_j$ terminal values for $\lambda=\mu=0.002$ and $\log(\text{Strike})=3$.]{\label{Ch4valortermhs} Graph of $H_1(T,y,\hat{x})$ (left) and  $H_w(T,y,\hat{x})$ (right),  $\hat{x}\in[-5,5]$, $y\in[0,2]$, $\lambda=\mu=0.002$, $\gamma=1$, $\log(\text{Strike})=3$.}
\end{figure}

We remark that function $H_w(T,y,\hat{x})$ takes much smaller values (absolute value) than function $Q_w(T,y,x)=\exp(H_w(T,y,\exp(\hat{x})))$. In Figure \ref{Ch4valortermhs} we plot the values of function $H_1(T,y,\hat{x})$ (left) and function $H_w(T,y,\hat{x})$ (right) for $\hat{x}\in[-5,5]$, $y\in[0,2]$, $\lambda=\mu=0.002$, $\gamma=1$ and $\log(\text{Strike})=3$.

\subsection{Localization of the problem}\label{OPTCEUtPmnlpde}

The localization procedure of the problem is similar to the one in \cite{Breton}.

We denote by $[L_{\min},L_{\max}]\subset \mathbb{R}$ the \textit{approximation domain}, which is a finite interval large enough to cover the relevant logarithmic stock prices.

We denote by $[\hat{x}_{\min},\hat{x}_{\max}]\subset \mathbb{R}$ the \textit{computational domain}, which is a finite interval such that $L_{\min}>\hat{x}_{\min}$ and $L_{\max}<\hat{x}_{\max}$. The convergence in $[L_{\min},L_{\max}]\subset \mathbb{R}$ of computed prices are obtained by taking $\hat{x}_{\min}\rightarrow-\infty$ and $\hat{x}_{\max}\rightarrow\infty$.

We define the intervals
\begin{equation}\label{Ch4definterv}
\begin{aligned}
& I_1=[\hat{x}_{\min},\hat{x}_{\max}], && I_2=[\hat{x}_{\max},2\hat{x}_{\max}-\hat{x}_{\min}],\\
& I_3=[2\hat{x}_{\max}-\hat{x}_{\min},4\hat{x}_{\max}-3\hat{x}_{\min}], && I=[\hat{x}_{\min},4\hat{x}_{\max}-3\hat{x}_{\min}],
\end{aligned}
\end{equation}
where we note that $I=I_1\cup I_2\cup I_3$.

We define function $H^e_j(t,y,\hat{x}), \ j\in\{1,w\}$ as the odd-even extension oh $H_j$. More precisely,
\begin{equation}\label{Ch4extenfor1}
H^e_j(t,y,\hat{x})=\left\{\begin{aligned}
& H_j(t,y,\hat{x}), \ \text{if} \  \hat{x}\in I_1, \\
& 2H_j(t,y,\hat{x}_{\max})-H_j(t,y,2\hat{x}_{\max}-\hat{x}), \ \text{if} \  \hat{x} \in I_2, \\
& H^e_j(t,y,(4\hat{x}_{\max}-2\hat{x}_{\min})-\hat{x}), \ \text{if} \  \hat{x} \in I_3, \\
& H^e_j(t,y,z), \ \text{if} \ \hat{x}\notin I.
\end{aligned}\right.
\end{equation}
where $z=-\hat{x}+\hat{x}_{\min}+k(4\hat{x}_{\max}-4\hat{x}_{\min})$ and $k\in\mathbb{Z}$ is such that $z\in[\hat{x}_{\min},4\hat{x}_{\max}-3\hat{x}_{\min}]$.
\begin{figure}[h]
\centering
\includegraphics[width=12cm,height=5 cm]{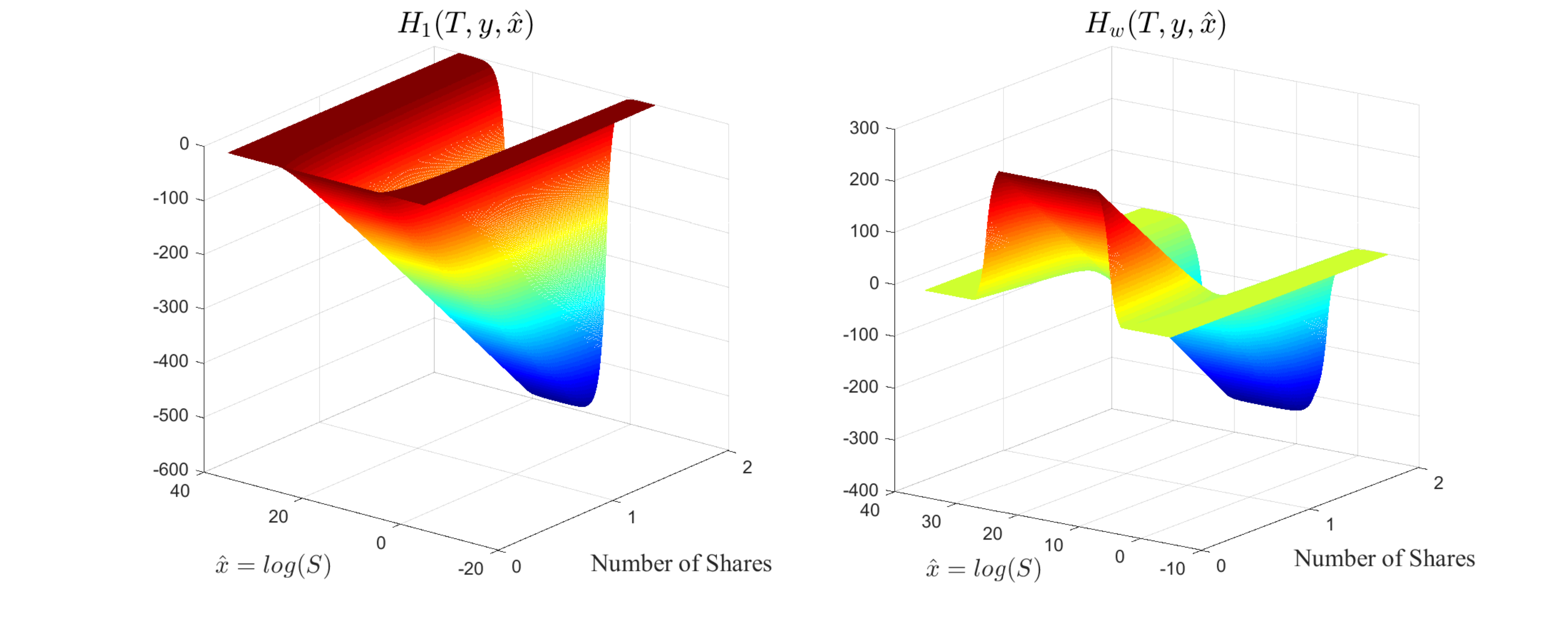}
\caption[$H^e_j$ terminal values for $\lambda=\mu=0.002$ and $\log(\text{Strike})=3$.]{\label{Ch4valortermhsext2} Graph of $H^e_1(T,y,\hat{x})$ (left) and $H^e_w(T,y,\hat{x})$ (right), $\hat{x}\in[-5,35]$, $y\in[0,2]$, $\lambda=\mu=0.002$, $\gamma=1$,  $\log(\text{Strike})=3$.}
\end{figure}

In Figure \ref{Ch4valortermhsext2} we plot function $H^e_1(T,y,\hat{x})$ (left) and function $H^e_w(T,y,\hat{x})$ (right) for $\hat{x}\in[-5,35]$, $y\in[0,2]$, $\lambda=\mu=0.002$, $\gamma=1$ and $\log(\text{Strike})=3$. Functions $H^e_j(T,y,\hat{x}), \ j\in\{1,w\}$ correspond to those of Figure \ref{Ch4valortermhs} after the odd-even extension defined by (\ref{Ch4extenfor1}).

The truncation of the domain that we have proposed induces the so called localization error, due to the extension of the function and the imposition of periodic boundary conditions, since the original function is not periodic. In Subsection \ref{OPTCEUCSCE}, we will prove that the localization error can be made arbitrary small in a fixed approximation domain $[L_{\min},L_{\max}]$ taking the computational domain large enough.

Although periodic boundary conditions can be directly imposed, in order to avoid the Gibbs effect, we have performed first an odd-even extension and then imposed periodic conditions.

Fix a grid $\bar{t}=\{t_m\}_{m=0}^{N}, \ 0=t_0<...<t_{m}<t_{m+1}<...<t_{N}={T}$.

For $t\in[t_{m},t_{m+1}]$, $\hat{x}\in[\hat{x}_{\min},4\hat{x}_{\max}-3\hat{x}_{\min}]$, we define an approximate function $H^p_j, \ j\in\{1,w\}$ as the solution of equation (\ref{Ch4ecunolinealdef}) supplemented with periodic boundary conditions:
\begin{equation*}
\begin{aligned}
& H^p_j(t,y,\hat{x}_{\min})=H^p_j(t,y,4\hat{x}_{\max}-3\hat{x}_{\min}), \\
& \frac{\partial H^p_j}{\partial x}(t,y,\hat{x}_{\min})=\frac{\partial H^p_j}{\partial x}(t,y,4\hat{x}_{\max}-3\hat{x}_{\min}),
\end{aligned}
\end{equation*}
and with the final condition $H^p_j(t_{m+1},y,\hat{x})=H^e_j(t_{m+1},y,\hat{x})$.

The value function $H_j(t_{m+1},y,\hat{x}), \ \hat{x}\in[x_{\min},x_{\max}]$ employed in (\ref{Ch4extenfor1}) is substituted by an approximation computed in the previous step of the numerical procedure,(see Subsection \ref{OPTCEUtPmDPNV}). Finally, and for notational convenience, we change the spatial domain to $x\in[0, 2\pi]$ defining:
\begin{equation}\label{Ch4ecunolinealdeftercam}
u_j(t,y,x)=H^p_j\left(t,y,\hat{x}_{\min}+\frac{4\hat{x}_{\max}-4\hat{x}_{\min}}{2\pi}x\right).
\end{equation}

Therefore, equation (\ref{Ch4ecunolinealdef}) becomes
\begin{equation}\label{Ch4ecunolinealdeftercercam}
\frac{\partial u_j}{\partial t}+A\frac{\partial u_j}{\partial x}+B \frac{\partial^2 u_j}{\partial x^2}+C\left(\frac{\partial u_j}{\partial x}\right)^2=0, \quad j\in\{1,w\},
\end{equation}
supplemented with periodic boundary conditions $u(0,t)=u(2\pi,t), \ u_{x}(0,t)=u_{x}(2\pi,t)$ and where
\begin{equation}\label{Ch4ecunolinealdeftercercamcons}
A=\left(\frac{2\pi}{4\hat{x}_{\max}-4\hat{x}_{\min}}\right)\left(\alpha-\frac{\sigma^2}{2}\right), \quad B=C=\left(\frac{2\pi}{4\hat{x}_{\max}-4\hat{x}_{\min}}\right)^2\frac{1}{2}\sigma^2.
\end{equation}

\subsection{A Pseudospectral method.}

For $N\in\mathbb{N}$, let $S_N$ be the space of trigonometric polynomials
\begin{equation}\label{Ch4spacetrigpol}
S_N=\text{span}\left\{e^{ikx} \ \middle| \ -N\leq k \leq N-1\right\}.
\end{equation}

Let $u(x,t)$ defined in $[0,2\pi]\times[0,T]$ be a continuous function. We define the set of nodes $\{x_j\}_{j=0}^{2N-1}$ by
\begin{equation}\label{Ch4setofnodes}
x_j=j\frac{\pi}{N}, \ j=0,1,...,2N-1,
\end{equation}

The Discrete Fourier Transform (DFT) coefficients $\left\{\hat{u}_k(t)\right\}_{k=-N}^{N-1}$ are
\begin{equation}\label{Ch4dft}
\hat{u}_k(t)=\frac{1}{2N}\sum^{2N-1}_{j=0}u(x_j,t)e^{-ikx_j}, \quad k=-N,...,N-1.
\end{equation}
and the trigonometric interpolant of function $u(x,t)$ at $\{x_j\}_{j=0}^{2N-1}$ is given by
\begin{equation}
I_N(u(x,t))=\sum^{N-1}_{k=-N}\hat{u}_k(t)e^{ikx}
\end{equation}
where the $\left\{\hat{u}_k(t)\right\}_{k=-N}^{N-1}$ are given by (\ref{Ch4dft}).

Let $u^N\in S_N$. The polynomial $u^N$ is unambiguously defined by its values at the nodes $\{x_j\}_{j=0}^{2N-1}$ given by (\ref{Ch4setofnodes}). We denote
\begin{equation}
\boldsymbol{U}_N=\left[u^N(x_0),...,u^N(x_{2N-1})\right]^{T}.
\end{equation}

The Discrete Fourier Transform (DFT) is an invertible, linear transformation $\mathfrak{F}_N:\mathbb{C}^{2N}\longrightarrow\mathbb{C}^{2N}$. We define
\begin{equation}\label{Ch4coeftransfour}
\hat{\boldsymbol{U}}_N=[\hat{u}^N_{-N},...,\hat{u}^N_0,...,\hat{u}^N_{N-1}]=\mathfrak{F}_N\boldsymbol{U}_N,
\end{equation}

The spectral derivative, \cite{Canuto}, is given by:
\begin{equation*}
D_N\boldsymbol{U}_N={\mathfrak{F}^{-1}_N\Delta_N \mathfrak{F}_N}\boldsymbol{U}_N \quad (\text{recursively} \  D^k_N={\mathfrak{F}^{-1}_N\Delta^k_N \mathfrak{F}}_N),
\end{equation*}
where $\Delta_N$ is a diagonal matrix given by $\Delta_N=\text{diag}(i n:-N\leq n \leq N-1)$.

For the rest of the work, given a complex function $u(x,t)$ defined in $[0,2\pi]\times[0,T]$, the notation $u(t)$ refers to a function $u(\cdot,t)\in L^2\left([0,2\pi],\mathbb{C}\right)$.

Let $u_T(x)$ be a given function. The Fourier collocation method, \cite{Canuto}, for equation (\ref{Ch4ecunolinealdeftercercam}) supplemented with periodic boundary conditions and subject to $u(x,T)=u_T(x)$ consists in finding a trigonometric polynomial $u^N(t)\in S_N$ such that $\forall j=0,1,...,2N-1$:
\begin{equation}\label{Ch4fourcolmeth}
\begin{aligned}
\frac{\partial u^N(x_j,t)}{\partial t}+A\frac{\partial u^N(x_j,t)}{\partial x}+B \frac{\partial^2 u^N(x_j,t)}{\partial {x}^2}+C  \left(\frac{\partial u^N(x_j,t)}{\partial {x}}\right)^2=0, \\
u^N(x_j,T)=u_T(x_j).
\end{aligned}
\end{equation}

The partial differential equation can be written as
\begin{equation*}
\frac{\partial \boldsymbol{U}_N}{\partial t}+A D_N\boldsymbol{U}_N+BD_N^2\boldsymbol{U}_N+ C \left(D_N\boldsymbol{U}_N\circ D_N\boldsymbol{U}_N\right)=0,
\end{equation*}
where $\circ$ denotes the Hadamard (entrywise) product.

Alternatively, using that $\hat{\boldsymbol{U}}_N=\mathfrak{F}_N\boldsymbol{U}_N$,
\begin{equation}
\frac{\partial \hat{\boldsymbol{U}}_N}{\partial t}+A\Delta_N\hat{\boldsymbol{U}}_N+B\Delta^2_N\hat{\boldsymbol{U}}_N+C\mathfrak{F}_N\left(\mathfrak{F}_N^{-1}\Delta_N \hat{\boldsymbol{U}}_N\circ\mathfrak{F}_N^{-1}\Delta_N \hat{\boldsymbol{U}}_N\right)=0,
\end{equation}
which is condensed as
\begin{equation}\label{Ch4expresfour}
\frac{\partial \hat{\boldsymbol{U}}_N}{\partial t}=\text{L}\left(\hat{\boldsymbol{U}}_N\right)+\text{NL}\left(\hat{\boldsymbol{U}}_N\right),
\end{equation}
where
\begin{equation}
\begin{aligned}
& \text{L}\left(\hat{\boldsymbol{U}}_N\right)=-\left[A\Delta_N+B\Delta^2_N\right]\hat{\boldsymbol{U}}_N, \\
& \text{NL}\left(\hat{\boldsymbol{U}}_N\right)=-C\mathfrak{F}_N\left(\mathfrak{F}_N^{-1}\Delta_N \hat{\boldsymbol{U}}_N\circ\mathfrak{F}_N^{-1}\Delta_N \hat{\boldsymbol{U}}_N\right).
\end{aligned}
\end{equation}

Expression (\ref{Ch4expresfour}) is equivalent to the collocation equation (\ref{Ch4fourcolmeth}). For recovering the function values at the nodes, we just apply the inverse operator $\boldsymbol{U}_N=\mathfrak{F}^{-1}_N\hat{\boldsymbol{U}}_N$ when necessary.

The numerical solution of (\ref{Ch4ecunolinealdeftercercam}) subject to $u(x,T)=u_T(x)$ is the polynomial $u^N(x,t)$ such that
\begin{equation}
\boldsymbol{U}_N(t)=\left[u^{N}(x_0,t),...,u^{N}(x_{2N-1},t)\right]^{T},
\end{equation}
which satisfies
\begin{equation}
\begin{aligned}
\frac{\partial \hat{\boldsymbol{U}}_N}{\partial t}=\text{L}\left(\hat{\boldsymbol{U}}_N\right)+\text{NL}\left(\hat{\boldsymbol{U}}_N\right), \\
\boldsymbol{U}_N(T)=\left[u_T(x_0),...,u_T(x_{2N-1})\right]^{T}.
\end{aligned}
\end{equation}

We refer to Subsection \ref{OPTCEUCSCE} for the theoretical analysis of the stability and convergence of the pseudospectral method. We now proceed to give the computational algorithm.

\subsection{Numerical algorithm}\label{OPTCEUtPmDPNV}

Suppose that we want to compute option prices for $\hat{x}$ in the \textit{approximation domain}, $\hat{x}\in[L_{\min},L_{\max}]$.

Therefore, we want to obtain a numerical solution for:
\begin{equation*}
H_j(t,y,\hat{x}):[0,T]\times[y_{\min},y_{\max}]\times[\hat{x}_{\min},\hat{x}_{\max}]\longrightarrow \mathbb{R}.
\end{equation*}
where $y_{\min}$, $y_{\max}$, $\hat{x}_{\min}$ and $\hat{x}_{\max}$ are chosen to be big enough. We refer to Section \ref{OPTCEUmda} for the empirical error analysis (localization error/number of shares).

\begin{definition}\label{Ch4defindiscretvar}
Given $\textbf{N}=(N_t, \ N_y, \ N_{\hat{x}})\in\mathbb{N}^3$, we define:
\begin{equation}
\Delta y = \frac{y_{\max}-y_{\min}}{N_y}, \quad \Delta \hat{x} = \frac{\hat{x}_{\max}-\hat{x}_{\min}}{N_{\hat{x}}}, \quad \Delta t = \frac{T}{N_t},
\end{equation}
and the sets of points
\begin{equation}
\begin{aligned}
& \{y_l\}^{N_y}_{l=0}, &&y_l=y_{\min}+l\Delta y, \\
& \{\hat{x}_k\}^{N_{\hat{x}}}_{k=0},  &&\hat{x}_k=\hat{x}_{\min}+k\Delta \hat{x}, \\
& \{t_m\}^{N_t}_{m=0},  &&t_m=m\Delta t. \\
\end{aligned}
\end{equation}
\end{definition}

For the localization procedure, we define two auxiliary sets of points.

\begin{definition}\label{Ch4defindiscretvar2}
We define $N_{x}=4N_{\hat{x}}$ and denote $\textbf{N}_e=(N_t, \ N_y, \ N_{x})\in\mathbb{N}^3$. We define:
\begin{equation}
\Delta \hat{x}^e= \frac{4\hat{x}_{\max}-3\hat{x}_{\min}}{N_{x}}=\left(\Delta \hat{x}\right), \quad \Delta x= \frac{2\pi}{N_{x}},
\end{equation}
and the sets of points
\begin{equation}
\begin{aligned}
& \{\hat{x}^e_s\}^{N_{x}}_{s=0}, && \hat{x}^e_s=\hat{x}_{\min}+s\Delta \hat{x}^e, \\
& \{x_s\}^{N_{x}}_{s=0}, && x_s=s\Delta x. \\
\end{aligned}
\end{equation}
\end{definition}

We note that $\hat{x}_k=\hat{x}^e_k, \ k=0,1,...,N_{\hat{x}}$. The set of spatial nodes $\{\hat{x}^e_s\}^{N_{x}}_{s=0}$ is needed in order to define the odd-even extension given by (\ref{Ch4extenfor1}).

The numerical solution is denoted by $H^{\textbf{N}}_j, \quad j\in\{1,w\}$. This solution is only computed for the discrete values included in $\{y_l\}^{N_y}_{l=0}$ and $\{t_m\}^{N_t}_{m=0}$.

We remark that $H^{\textbf{N}}_j, \ j\in\{1,w\}$ is the numerical approximation to the function value just in $[\hat{x}_{\min},\hat{x}_{\max}]$ but, for a particular choice of $y_{l_0}$ and $t_{m_0}$, the functions $H^{\textbf{N}}_j(t_{m_0},y_{l_0},\hat{x}), \quad j\in\{1,w\}$ are a $N_{x}=4N_{\hat{x}}$ degree trigonometric polynomial defined in $[\hat{x}_{\min},4\hat{x}_{\max}-3\hat{x}_{\min}]$ by its values at $\{\hat{x}^e_s\}^{N_{x}}_{s=0}$ after performing the odd-even extension given in (\ref{Ch4extenfor1}).

The algorithm is:

\

\noindent\textbf{{Step 0:}} Set $m=N_t$ ($t_{N_t}=T$).

For each $y_l\in\{y_l\}^{N_y}_{l=0}$ and for each ${\hat{x}_k}\in\{\hat{x}_k\}^{N_{\hat{x}}}_{k=0}$ compute
\begin{equation*}
H^{\textbf{N}}_j(T,y_l,{\hat{x}_k})=H_j(T,y_l,{\hat{x}_k}), \ j\in\{1,w\}
\end{equation*}
 with formulas (\ref{Ch4pde2camfin1})-(\ref{Ch4pde2camfin2}).

\

\noindent\textbf{{Step 1:}} For each $y_l\in\{y_l\}^{N_y}_{l=0}$, extend the function $H^{\textbf{N}}_j(t_m,y_l,\hat{x})$ defined in $[\hat{x}_{\min},\hat{x}_{\max}]$ to the trigonometric polynomials $\mathbb{H}^{\textbf{N}_e}_j(t_m,y_l,\hat{x})$ defined in $[\hat{x}_{\min}, 4\hat{x}_{\max}-3\hat{x}_{\min}]$ as in Subsection \ref{OPTCEUtPmnlpde}.

For each $y_l\in\{y_l\}^{N_y}_{l=0}$ and for $s=0,1,...,4N_{\hat{x}}-1$ define
\begin{equation*}
\mathbb{H}^{\textbf{N}_e}_j(t_m,y_l,\hat{x}^e_s)=\left\{
\begin{aligned}
& H^{\textbf{N}}_j(t_m,y_l,\hat{x}^e_s), \ \text{if} \ \ \hat{x}^e_s \in I_1, \\
& 2H^{\textbf{N}}_j(t_m,y_l,\hat{x}_{\max})-H^{\textbf{N}}_j(t_m,y_l,2\hat{x}_{\max}-\hat{x}^e_s), \ \text{if} \ \ \hat{x}^e_s \in I_2, \\
& \mathbb{H}^{\textbf{N}}_j(t_m,y_l,(4\hat{x}_{\max}-2\hat{x}_{\min})-\hat{x}^e_s), \ \text{if} \ \ \hat{x}^e_s \in I_3, \\
& \mathbb{H}^{\textbf{N}}_j(t_m,y_l,z), \ \text{if} \ \ \hat{x}^e_s \notin I,
\end{aligned}\right.
\end{equation*}
where $z=-\hat{x}^e_s+\hat{x}_{\min}+k(4\hat{x}_{\max}-4\hat{x}_{\min})$ and $k\in\mathbb{Z}$ is such that $z\in[\hat{x}_{\min},4\hat{x}_{\max}-3\hat{x}_{\min}]$. We recall that the intervals were given by (\ref{Ch4definterv}).

For each $y_l\in\{y_l\}^{N_y}_{l=0}$ and each $x_s\in\{x_s\}^{N_{x}}_{s=0}$ write
\begin{equation*}
u^{N_x}_j(t_m,y_l,x_s)=\mathbb{H}^{\textbf{N}_e}_j\left(t_m,y_l,\hat{x}_{\min}+\frac{4\hat{x}_{\max}-4\hat{x}_{\min}}{2\pi}x_s\right)
\end{equation*}
to obtain trigonometric polynomials defined for $x\in[0,2\pi]$. Set:
\begin{equation*}
\boldsymbol{U}^{y_l}_{N_x}=\left[u^{N_x}_j(t_m,y_l,x_0), u^{N_x}_j(t_m,y_l,x_1), ..., u^{N_x}_j(t_m,y_l,x_{N_x-1}) \right]^T.
\end{equation*}

Then, for each $y_l\in\{y_l\}^{N_y}_{l=0}$ compute the approximated No Transactions function value $u^{N_x}_j(t_{m-1},y_l,x)$ as the numerical solution of the Fourier pseudospectral method:
\begin{equation*}
\left\{
\begin{aligned}
& \frac{\partial \hat{\boldsymbol{U}}}{\partial t}+A\Delta\hat{\boldsymbol{U}}+B\Delta^2\hat{\boldsymbol{U}}+C\mathfrak{F}\left(\mathfrak{F}^{-1}\Delta \hat{\boldsymbol{U}}\circ\mathfrak{F}^{-1}\Delta \hat{\boldsymbol{U}}\right)=0, \\
&  \hat{\boldsymbol{U}}(t_m)=\mathfrak{F}\left(u^{N}_j(t_{m},y_l,\bar{\mathrm{x}})\right),
\end{aligned}
\right.
\end{equation*}
where $\hat{\boldsymbol{U}}=\hat{\boldsymbol{U}}^{y_l}_{N_x}$, $\Delta\hat{\boldsymbol{U}}=\Delta_{N_x}\hat{\boldsymbol{U}}^{y_l}_{N_x}$, $\mathfrak{F}=\mathfrak{F}_{N_{x}}$, constants $A,B,C$ are given by formula (\ref{Ch4ecunolinealdeftercercamcons}) and $\bar{\mathrm{x}}=\{x_k\}_{k=0}^{N_x-1}$.

For each $y_l\in\{y_l\}^{N_y}_{l=0}$, and each $\hat{x}_k\in\{\hat{x}_k\}^{N_{\hat{x}}}_{k=0}$ define
\begin{equation*}
H^p_j(t_{m-1},y_l,\hat{x}_k)=u^{N}_j(t_{m-1},y_l,x_k),
\end{equation*}
which corresponds to the function values if no transactions are realized. We remark that $\hat{x}_k\in[\hat{x}_{\min},\hat{x}_{\max}]$, the values that correspond to the computational domain.

\

\noindent\textbf{Step 2:} Search the location of the buying/selling frontiers for each $\hat{x}_k\in \{\hat{x}_k\}^{N_{\hat{x}}}_{k=0}$ at $t=t_{m-1}$.

We assume that the state space remains divided in three Regions (Buying/Selling/No Transactions).

The location of the frontiers is done through the discrete counterpart of equation (\ref{Ch4ecupdedefin}) after the changes (\ref{Ch4logscale})-(\ref{Ch4logfunc}). We search the biggest/smallest value for which it is not optimal to respectively buy/sell shares. The numerical approximation to the Buying frontier is $y^{\mathscr{B}}_j(t_{m-1}, \hat{x}_k)=$ \small
\begin{equation*}
\underset{y\in\{y_l\}^{N_y}_{l=0}}{\min}\left\{\frac{\gamma(1+\lambda)\exp({\hat{x}_k})}{\delta(T,t_{m-1})}\Delta y+H^p_j(t_{m-1},y_l+\Delta y,\hat{x}_k)-H^p_j(t_{m-1},y_l,\hat{x}_k)>0\right\},
\end{equation*}\normalsize
and to the Selling frontier is $y^{\mathscr{S}}_j(t_{m+1}, \hat{x}_k)=$ \small
\begin{equation*}
\underset{y\in\{y_l\}^{N_y}_{l=0}}{\max}\left\{-\frac{\gamma(1-\mu)\exp({\hat{x}_k})}{\delta(T,t_{m-1})}\Delta y+H^p_j(t_{m-1},y_l-\Delta y,\hat{x}_k)-H^p_j(t_{m-1},y_l,\hat{x}_k)>0\right\}.
\end{equation*} \normalsize

With this definition, the discrete frontier is a point of the mesh $\{y_l\}^{N_y}_{l=0}$, so that the time evolution is piecewise constant.

\

\noindent\textbf{Step 3:} Obtain, for each $\hat{x}_k\in\{\hat{x}_k\}^{N_{\hat{x}}}_{k=0}$ and each $y_l\in\{y_l\}^{N_y}_{l=0}$ the value of $H^{\textbf{N}}_j(t_{m-1},y_l,\hat{x}_k)$ employing the explicit formulas (\ref{Ch4ecuzonacompranolin}) and (\ref{Ch4ecuzonaventanolin}).

In the Buying Region $\left(y_{l}< y^{\mathscr{B}_{\textbf{N}}}_j(t_{m-1},\hat{x}_k)\right)$, function $H^{\textbf{N}}_j(t_{m-1},y_l,\hat{x}_k)=$
\begin{equation*}
H^p_j(t_{m-1},y^{\mathscr{B}_{\textbf{N}}}_j(t_{m-1},\hat{x}_k),\hat{x}_k)+\left(-\frac{\gamma(1+\lambda)\exp({\hat{x}_k})}{\delta(T,t_{m-1})}(y_l-y^{\mathscr{B}_{\textbf{N}}}_j(t_{m-1},\hat{x}_k,))\right),
\end{equation*}

In the No Transactions $\left(y^{\mathscr{B}_{\textbf{N}}}_j(t_{m-1},\hat{x}_k) \leq y_l\leq y^{\mathscr{S}_{\textbf{N}}}_j(t_{m-1},\hat{x}_k)\right)$
\begin{equation*}
H^{\textbf{N}}_j(t_{m-1},y_l,\hat{x}_k) =H^p_j(t_{m-1},y_l,\hat{x}_k),
\end{equation*}

In the Selling Region $\left(y_l> y^{\mathscr{S}_{\textbf{N}}}_j(t_{m-1},\hat{x}_k)\right)$, function $H^{\textbf{N}}_j(t_{m-1},y,\hat{x}_k)=$
\begin{equation*}
H^p_j(t_{m-1},y^{\mathscr{S}_{\textbf{N}}}_j(t_{m-1},\hat{x}_k,),\hat{x}_k)+\left(\frac{\gamma(1-\mu)\exp({\hat{x}_k})}{\delta(T,t_{m-1})}(y^{\mathscr{S}_{\textbf{N}}}_j(t_{m-1},\hat{x}_k,)-y)\right).
\end{equation*}

\noindent\textbf{Step 4:} If $t_{m-1}=0$ end. Otherwise, $m=m-1$ and proceed to \textbf{Step 1}.

For each $y_l\in\{y_l\}^{N_y}_{l=0}$ and each $t_m\in\{t_m\}^{N_t}_{m=0}$, redefine $H^{\textbf{N}}_j(t_{m},y_l,\hat{x})$ as the trigonometric
polynomial defined $[\hat{x}_{\min},4\hat{x}_{\max}-3\hat{x}_{\min}]$ by its values at $\hat{x}^e_s\in\{\hat{x}^e_s\}^{N_{x}}_{s=0}$ with the odd-even extension given by (\ref{Ch4extenfor1}).

\

The numerical approximation to the option price $\forall \hat{x}\in[\hat{x}_{\min},\hat{x}_{\max}]$ and for each $ t_m\in\{t_m\}^{N_t}_{m=0}$  is computed through
\begin{equation}\label{Ch4numericalopprice}
p^{\textbf{N}}_w(t_m,\hat{x})=\frac{\delta(T,t_m)}{\gamma}\left(H^{\textbf{N}}_w(t_m,0,\hat{x})-H^{\textbf{N}}_1(t_m,0,\hat{x})\right).
\end{equation}

\subsection{Stability, consistency and convergence. Localization error.}\label{OPTCEUCSCE}

We will follow the lines presented in \cite{Frutos2} to study the stability and convergence of the Fourier pseudospectral method. Since partial differential equation (\ref{Ch4ecunolinealdeftercercam}) is solved backwards, for simplicity, we perform the change of variable $\tau=T-t$, so that we deal with the non-linear periodic problem:
\begin{equation}\label{ecu1}
\begin{aligned}
& \frac{\partial u}{\partial \tau}=A\frac{\partial^2 u}{\partial {x}^2}+B\frac{\partial u}{\partial x}+C\left(\frac{\partial u}{\partial {x}}\right)^2, \\
& u(0,\tau)=u(2\pi,\tau), \quad u_x(0,\tau)=u_x(2\pi,\tau), \\
& u(x,0)=u_0(x)
\end{aligned}
\end{equation}
where $u_0(x)$ is given and constants $A,B,C$ are the same as in (\ref{Ch4ecunolinealdeftercercamcons}).

For the analysis, we assume that the regularity conditions upon $u_0(x)$ are the same regularity conditions required upon $u$ in the different Theorems and Propositions. The particular initial conditions of the financial problem that we are dealing with will be discussed after the theoretical development.

Let $L^2=L^2(0,2\pi)$ denote the space of the Lebesgue-measurable functions $u:(0,2\pi)\rightarrow\mathbb{C}$. We denote by $\|.\|$ the usual $L^2$ norm \cite[(2.1.11)]{Canuto}.

We define the norm $||u||_{\infty}$ (see \cite[5.1.3]{Canuto})  by $||u||_{\infty}=\underset{0\leq x \leq 2\pi}{\sup}\left|u(x)\right|$.

For any function $u(\tau)\in L^2([0,2\pi])$, let $P_Nu(\tau) \in S_N$ be the orthogonal projection \cite[(2.1.8)]{Canuto} of $u(\tau)$ over $S_N$.

For $u\in S_N$, we denote by $||u||_N$ the usual discrete norm \cite[(2.1.34)]{Canuto}. We note that if $u\in S_{N}$, it holds $\|u\|_N=\|u\|$ (see \cite[(2.1.33)]{Canuto}).

Let $H^s=H^s(0,2\pi)$ denote the usual Sobolev space of order $s$ and $||.||_{H^s}$ its norm \cite[A.11]{Canuto}. We consider \cite{Canuto} the subspace $H^s_p\subset H^s$ defined by $H^{s}_p(0,2\pi)=$
\begin{equation*}
\begin{aligned}
\left\{v\in L^2(0,2\pi): \right. & \left. \text{for} \ 0\leq k\leq s, \ \text{the derivative} \ \frac{\text{d}^k v}{\text{d}x^k} \  \text{in the} \right. \\
& \left. \text{sense of periodic distributions belongs to} \ L^2(0,2\pi)\right\}.
\end{aligned}
\end{equation*}

\subsubsection{Stability, consistency and convergence.}

We recall that in the proposed collocation method, we search for a function $u^N(\tau)\in S_N$ such that $\forall j=0,... \ ,2N-1$:
\begin{equation}\label{Ch4colloc2}
\begin{aligned}
& \frac{\partial u^{N}}{\partial \tau}{(x_j,\tau)}=A\frac{\partial^2 u^{N}}{\partial {x}^2}{(x_j,\tau)}+B\frac{\partial u^{N}}{\partial x}{(x_j,\tau)}+C\left(\frac{\partial u^{N}}{\partial {x}}{(x_j,\tau)}\right)^2, \\
& u^{N}(0,\tau)=u^{N}(2\pi,\tau), \\
& u^{N}(x_j,0)=u_0(x_j).
\end{aligned}
\end{equation}

Fix T>0. Let $V(x,\tau), W(x,\tau)$ be two $2\pi$-periodic and smooth functions defined in $[0, \ 2\pi]\times[0, \ T]$. These functions will be seen as perturbed solutions of equation (\ref{ecu1}).

Let $V^N(\tau)=I_N(V(\tau))$ and $W^N(\tau)=I_N(W(\tau))$. We define the residuals $F^N(x,\tau)$, $G^N(x,\tau)\in S_N$, as the trigonometric polynomials such that for $j=0,... \ ,2N-1$  satisfy:
\small
\begin{equation*}
\begin{aligned}
F^N(x_j,\tau) &= \frac{\partial V^N}{\partial \tau}{(x_j,\tau)} -A\frac{\partial^2 V^N}{\partial {x}^2}{(x_j,\tau)}-B\frac{\partial V^N}{\partial x}{(x_j,\tau)}-C\left(\frac{\partial V^N}{\partial {x}}{(x_j,\tau)}\right)^2, \\
G^N(x_j,\tau) &= \frac{\partial W^N}{\partial \tau}{(x_j,t)} -A\frac{\partial^2 W^N}{\partial {x}^2}{(x_j,\tau)}-B\frac{\partial W^N}{\partial x}{(x_j,\tau)}-C\left(\frac{\partial W^N}{\partial {x}}{(x_j,\tau)}\right)^2.
\end{aligned}
\end{equation*}
\normalsize

The proofs of the following results can be found in the appendix.

\begin{theorem}\label{estabilidad}(Stability)
Let T>0 be fixed and $V^N, \ W^N, \ F^N, \ G^N$ defined above.

Let $M  \geq 0$ such that threshold condition (justified in Proposition \ref{threshold}) holds:
\begin{equation}\label{Ch4threshcondstab}
\|(V^N)_x\|_{\infty}, \|(W^N)_x\|_{\infty} \leq M, \ \ \tau\in[0,T].
\end{equation}

Then, it exists a constant $R=R\left(M\right)$ such that
\begin{equation*}
\underset{0\leq \tau \leq T}{\max} \|e^{N}(\tau)\|^2 +\frac{A}{2}\int_0^T\|e^{N}_x(\tau)\|^2d\tau  \leq R\left(\|e^{N}(0)\|^2+ \int_0^T \|J^{N}(\tau)\|^2 d\tau \right),
\end{equation*}
where $e^{N}(\tau)=V^{N}(\tau)-W^{N}(\tau)$ and $J^{N}(\tau)=F^N(\tau)-G^N(\tau)$.
\end{theorem}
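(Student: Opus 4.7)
My plan is to prove a discrete energy estimate for the error $e^N=V^N-W^N$ on the collocation grid, following the standard pseudospectral analysis in \cite{Frutos2}. Subtracting the collocation identities defining the residuals $F^N$ and $G^N$ gives, pointwise at each node $x_j$,
\[
e^N_\tau(x_j,\tau) - A\,e^N_{xx}(x_j,\tau) - B\,e^N_x(x_j,\tau) - C\bigl((V^N_x)^2 - (W^N_x)^2\bigr)(x_j,\tau) = J^N(x_j,\tau).
\]
The starting point is to take the discrete $L^2$ inner product of this identity with $e^N$, that is, to multiply by $e^N(x_j,\tau)$, sum over $j$ with the trapezoidal weights, and use the pointwise relation $\|v\|_N=\|v\|$ for $v\in S_N$.

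For the three linear terms I use the fact that the trapezoidal rule on $2N$ equispaced nodes is exact for $S_{2N-1}$, so whenever the integrand lies in $S_{2N-1}$ the discrete and continuous inner products coincide. This gives $\langle e^N_\tau, e^N\rangle_N = \tfrac{1}{2}\tfrac{d}{d\tau}\|e^N\|^2$; integration by parts together with periodicity yields $A\langle e^N_{xx},e^N\rangle_N = -A\|e^N_x\|^2$ and $B\langle e^N_x,e^N\rangle_N = 0$. The energy identity becomes
\[
\tfrac{1}{2}\tfrac{d}{d\tau}\|e^N\|^2 + A\|e^N_x\|^2 = C\,\langle (V^N_x)^2-(W^N_x)^2, e^N\rangle_N + \langle J^N, e^N\rangle_N.
\]

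The main obstacle is the nonlinear term, because $(V^N_x)^2$ lies in $S_{2N-1}$ rather than $S_N$ and one must work at the grid level. I factor $(V^N_x)^2-(W^N_x)^2 = (V^N_x+W^N_x)\,e^N_x$ and apply the discrete Cauchy--Schwarz inequality nodewise, using the threshold hypothesis (\ref{Ch4threshcondstab}) to bound $\|V^N_x+W^N_x\|_\infty\le 2M$:
\[
\bigl|\langle(V^N_x+W^N_x)\,e^N_x,\,e^N\rangle_N\bigr|\;\le\;2M\,\|e^N_x\|_N\|e^N\|_N \;=\; 2M\,\|e^N_x\|\,\|e^N\|,
\]
where the last equality again uses $\|v\|_N=\|v\|$ for $v\in S_N$. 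Young's inequality with a weight tuned to absorb $\|e^N_x\|^2$ into the dissipative term bounds this by $\tfrac{A}{2}\|e^N_x\|^2 + K_1(M)\|e^N\|^2$ with $K_1(M)=2C^2M^2/A$. The residual contribution is handled analogously: $|\langle J^N,e^N\rangle_N|\le\tfrac{1}{2}\|J^N\|^2+\tfrac{1}{2}\|e^N\|^2$.

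Combining these estimates collapses the identity into the differential inequality
\[
\tfrac{d}{d\tau}\|e^N\|^2 + A\|e^N_x\|^2 \;\le\; K(M)\,\|e^N\|^2 + \|J^N\|^2,
\]
with $K(M)$ depending only on $A$, $C$, $M$. Gronwall's lemma then gives $\|e^N(\tau)\|^2\le e^{K(M)T}\bigl(\|e^N(0)\|^2+\int_0^T\|J^N(s)\|^2\,ds\bigr)$, and feeding this bound back and integrating the original inequality over $[0,T]$ controls $\tfrac{A}{2}\int_0^T\|e^N_x(\tau)\|^2\,d\tau$ by the same right-hand side. Absorbing all constants (depending on $T$, $A$, $B$, $C$, $M$) into a single $R=R(M)$ yields the claimed stability estimate.
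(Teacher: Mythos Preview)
Your proof is correct and follows essentially the same route as the paper's: subtract the two collocation identities, take the (discrete) $L^2$ inner product with $e^N$, use the coincidence of discrete and continuous inner products on $S_N$ together with periodicity to handle the linear terms, factor $(V^N_x)^2-(W^N_x)^2=(V^N_x+W^N_x)e^N_x$ and invoke the threshold bound, then apply Young's inequality and Gronwall. The only differences are cosmetic (slightly different weights in Young's inequality, hence slightly different constants in $K(M)$), and your final step of integrating the differential inequality to recover the $\tfrac{A}{2}\int_0^T\|e^N_x\|^2$ term is exactly what the paper packages into its citation of Gronwall's lemma.
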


\begin{proposition}\label{Ch4lemmainterpderiv}
Let $u(\tau)\in H^{s+r}_p, \ s,r\geq 1$ for $\tau\in[0,T]$ continuous.

Then it exists a constant $M=M\left(\underset{0\leq\tau\leq T}{\max}\left\|\frac{\partial^{s+1} u}{\partial x^{s+1}}\right\|\right)\geq0$ such that for any $N\in\mathbb{N}$, it holds:
\begin{equation*}
\left\|\frac{\partial^s P_N(u(\tau))}{\partial x^s}\right\|_{\infty} \leq M, \quad \left\|\frac{\partial^s I_N(u)}{\partial x^s}\right\|_{\infty} \leq M, \quad \tau\in[0,T].
\end{equation*}
\end{proposition}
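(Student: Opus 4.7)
The plan is to reduce the $L^\infty$ bounds on derivatives of $P_N u$ and $I_N u$ to $L^2$ estimates, via a one-dimensional Sobolev-type inequality on zero-mean trigonometric polynomials, and then exploit the standard stability properties of the orthogonal projection $P_N$ and the trigonometric interpolant $I_N$. The key observation is that, since $s\ge 1$ and every function involved is $2\pi$-periodic, the operator $\partial^s$ annihilates the $k=0$ Fourier mode: $\partial^s u$, $\partial^s P_N u=P_N\partial^s u$, and $\partial^s I_N u$ all have zero mean on $[0,2\pi]$. For any zero-mean trigonometric polynomial $v=\sum_{k\ne 0}\hat v_k e^{ikx}$, Cauchy--Schwarz gives
\[
\|v\|_\infty\le\sum_{k\ne 0}|\hat v_k|\le\Bigl(\sum_{k\ne 0}k^{-2}\Bigr)^{1/2}\Bigl(\sum_{k\ne 0}k^{2}|\hat v_k|^{2}\Bigr)^{1/2}=C\|v_x\|,
\]
so it suffices to bound $\|\partial^{s+1}P_N u\|$ and $\|\partial^{s+1}I_N u\|$ in $L^2$ by $\|\partial^{s+1}u\|$.

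For the projection, combining the identity $\partial P_N=P_N\partial$ on $H^1_p$ with the $L^2$-contractivity $\|P_N\|_{L^2\to L^2}\le 1$ gives at once $\|\partial^{s+1}P_N u(\tau)\|=\|P_N\partial^{s+1}u(\tau)\|\le\|\partial^{s+1}u(\tau)\|$, whence $\|\partial^s P_N u(\tau)\|_\infty\le C\,\max_{0\le\tau\le T}\|\partial^{s+1}u(\tau)\|$ uniformly in $\tau$.

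For the interpolant, write $\tilde u(x,\tau)=u(x,\tau)-\bar u(\tau)$ with $\bar u(\tau)=\frac{1}{2\pi}\int_0^{2\pi}u(\cdot,\tau)\,dx$, so that $I_N u=I_N\tilde u+\bar u$ and hence $\partial^{s+1}I_N u=\partial^{s+1}I_N\tilde u$. Because $u(\tau)\in H^{s+r}_p\subset H^{s+1}_p$ with $s+1>\tfrac12$, the standard stability estimate for the trigonometric interpolant (Canuto \emph{et al.} \cite[Thm.~5.1.4]{Canuto}) yields $\|I_N\tilde u\|_{H^{s+1}}\le C\|\tilde u\|_{H^{s+1}}$; while iterated Wirtinger--Poincar\'e inequalities applied to the zero-mean $\tilde u$ give $\|\tilde u\|_{H^{s+1}}\le C\|\partial^{s+1}\tilde u\|=C\|\partial^{s+1}u\|$. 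Chaining these,
\[
\|\partial^s I_N u(\tau)\|_\infty\le C\|\partial^{s+1}I_N u(\tau)\|\le C\|I_N\tilde u(\tau)\|_{H^{s+1}}\le C\|\partial^{s+1}u(\tau)\|,
\]
again uniformly in $\tau\in[0,T]$, yielding the desired constant $M$.

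The main obstacle is invoking the correct stability bound for $I_N$ in $H^{s+1}_p$, which rests on the aliasing analysis of discrete Fourier coefficients and is the reason the slightly stronger regularity $H^{s+r}_p$ with $r\ge 1$ (so that $s+r>\tfrac12$ comfortably and the interpolant is well defined as a continuous operator) appears in the hypotheses; once this is accepted from \cite{Canuto}, everything else amounts to Sobolev embedding, projection contractivity, and iterated Poincar\'e, which are routine in the periodic setting.
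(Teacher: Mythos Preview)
Your argument is correct but proceeds differently from the paper. The paper's proof uses a triangle-inequality decomposition with $u$ as intermediary,
\[
\left\|\partial^s P_N u\right\|_\infty \le \left\|\partial^s (P_N u - u)\right\|_\infty + \left\|\partial^s u\right\|_\infty,
\]
then applies the Sobolev embedding $\|\cdot\|_\infty\le C_1\|\cdot\|_{H^1}$ and controls the difference term via the standard projection/interpolation error and aliasing bounds from \cite{Canuto}, obtaining $\left\|\partial^s(P_N u - u)\right\|_{H^1}\le K_1 N^{1-r}\|\partial^{s+r}u\|$ (and analogously for $I_N$); taking $r=1$ yields the constant depending only on $\|\partial^{s+1}u\|$. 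Your route bypasses approximation error entirely and relies instead on operator stability: commutation $\partial P_N = P_N\partial$ plus $L^2$-contractivity settles the projection case in one line, while for the interpolant you invoke $H^{s+1}$-stability of $I_N$ together with iterated Poincar\'e on the mean-subtracted function. Your approach is more elementary for $P_N$ and arguably cleaner overall; the paper's is more uniform in its treatment of $P_N$ and $I_N$ and makes transparent where the extra derivative of regularity ($r\ge 1$) is consumed. The $H^{s+1}$-stability of $I_N$ you rely on is of course equivalent in strength to the interpolation-error estimate the paper uses, so neither method is fundamentally cheaper.
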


\begin{proposition}\label{Ch4Consistproposition}(Consistency)
Let $u(x,\tau)$ be the solution of equation (\ref{ecu1}). Suppose that $\forall \tau\in [0,T]$, function $u(\tau)\in H^{s+2}_p$ and $u_{\tau}(\tau)\in H^s_p$.

Define $F^N(\tau)\in S_N, \ \forall \tau\in[0,T]$ and $j=0,... \ ,2N-1$ by
\begin{equation}\label{Ch4exprthconsistencia}
F^N(x_j,\tau) = \left.\left[\frac{\partial I_N(u)}{\partial \tau} -A\frac{\partial^2 I_N(u)}{\partial {x}^2}-B\frac{\partial I_N(u)}{\partial x}-C\left(\frac{\partial I_N(u)}{\partial {x}}\right)^2\right]\right|_{{(x_j,\tau)}}.
\end{equation}

Then it exists a constant
\begin{equation*}
M=M\left(\underset{0\leq \tau\leq T}{\max}\left\{\left\|\frac{\partial^{s+1} u}{\partial x^{s+1}}\right\| , \left\|\frac{\partial^{s+2} u}{\partial x^{s+2}}\right\| , \ \left\|\frac{\partial^{s} u_{\tau}}{\partial x^{s}}\right\| \right\}\right),
\end{equation*}
such that
\begin{equation*}
\underset{0\leq \tau\leq T}{\max}\|F^N\|\leq MN^{-s}.
\end{equation*}

\end{proposition}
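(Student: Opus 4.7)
The plan is to exploit that $u$ solves (\ref{ecu1}) pointwise and that at the collocation nodes $x_j$ the interpolant satisfies $I_N(u)(x_j)=u(x_j)$ and $(\partial_\tau I_N u)(x_j)=I_N(u_\tau)(x_j)=u_\tau(x_j)$, so that in the definition (\ref{Ch4exprthconsistencia}) the time derivative can be replaced by the spatial operator acting on $u$. Writing the interpolation residual $e=u-I_N u$ and $g=u_x+(I_N u)_x$, and using the factorization $u_x^2-((I_N u)_x)^2=e_x\,g$ for the quadratic term, this collapses to
\[
F^N(x_j,\tau)=A\,e_{xx}(x_j,\tau)+B\,e_x(x_j,\tau)+C\,e_x(x_j,\tau)\,g(x_j,\tau).
\]
Since $F^N\in S_N$, we have $\|F^N\|=\|F^N\|_N$, and the problem reduces to bounding the $L^2$ norm of the trigonometric interpolant of the three summands on the right.

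For the linear summands I would bound $\|I_N(e_{xx})\|$ and $\|I_N(e_x)\|$. As $(I_N u)_x$ and $(I_N u)_{xx}$ already lie in $S_N$ and are therefore fixed by $I_N$, we have $I_N(e_x)=I_N(u_x)-(I_N u)_x$ and similarly for $e_{xx}$. These are the standard aliasing quantities, controlled by inserting the orthogonal projection $P_N$:
\[
\|I_N(u_x)-(I_N u)_x\|\le\|I_N(u_x)-P_N(u_x)\|+\|\partial_x(P_N u-I_N u)\|,
\]
where the first term is $O(N^{-s}\|u\|_{H^{s+1}})$ by the standard $L^2$ interpolation error estimate, and the second is controlled via the inverse inequality $\|v_x\|\le N\|v\|$ on $S_N$ together with $\|(P_N-I_N)u\|\le CN^{-(s+1)}\|u\|_{H^{s+1}}$. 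Applying the same argument with one additional derivative gives $\|I_N(u_{xx})-(I_N u)_{xx}\|\le CN^{-s}\|u\|_{H^{s+2}}$, which is precisely where the hypothesis $u\in H^{s+2}_p$ is used.

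For the quadratic summand I would rely on the pointwise-times-uniform bound
\[
\|I_N(e_x\,g)\|^2=\|I_N(e_x\,g)\|_N^2\le\|g\|_\infty^2\,\|e_x\|_N^2=\|g\|_\infty^2\,\|I_N(e_x)\|^2,
\]
so that the already-established estimate on $\|I_N(e_x)\|$ is reused. The uniform boundedness in $N$ of $\|g\|_\infty=\|u_x+(I_N u)_x\|_\infty$ is exactly what Proposition \ref{Ch4lemmainterpderiv} delivers (applied with $s=1$), together with the one-dimensional Sobolev embedding $H^{s+2}\hookrightarrow W^{1,\infty}$ for $\|u_x\|_\infty$. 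Collecting the three pieces and taking the supremum over $\tau\in[0,T]$ yields $\max_{0\le\tau\le T}\|F^N(\tau)\|\le MN^{-s}$ with $M$ depending only on the $\tau$-uniform bounds on $\|u\|_{H^{s+2}}$ and $\|u_\tau\|_{H^s}$ (the latter guaranteeing the pointwise evaluation of $u_\tau$ needed to replace the time derivative) and on the constant from Proposition \ref{Ch4lemmainterpderiv}.

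The delicate step is the quadratic nonlinearity: $(u_x)^2$ is not a trigonometric polynomial in $S_N$ even when $u_x$ is, and interpolating it at the $2N$ equispaced nodes introduces genuine aliasing. The factorization $u_x^2-((I_N u)_x)^2=e_x\,((I_N u)_x+u_x)$ is what makes the argument go through cleanly, because it isolates a small factor $e_x$ of size $O(N^{-s})$ multiplied by a factor bounded uniformly in $N$ by Proposition \ref{Ch4lemmainterpderiv}; without that factorization one would have to analyze the aliasing of a quadratic trigonometric polynomial directly, which would in general cost a factor of $N$ and break the optimal rate.
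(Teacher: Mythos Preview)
Your argument is correct and uses the same essential ingredients as the paper: the factorization $u_x^2-((I_Nu)_x)^2=e_x\,g$ for the nonlinear term, the uniform bound on $\|g\|_\infty$ from Proposition~\ref{Ch4lemmainterpderiv}, and the standard aliasing/Bernstein estimates for $I_N(u_x)-(I_Nu)_x$ and $I_N(u_{xx})-(I_Nu)_{xx}$.

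The organization differs from the paper in one respect worth noting. The paper does not work directly with the discrete norm; instead it introduces the intermediate quantity
\[
J^{2N}=\partial_\tau I_N u-A\,\partial_x^2 I_N u-B\,\partial_x I_N u-C\,(\partial_x I_N u)^2\in S_{2N},
\]
observes that $F^N=I_N(J^{2N})$, bounds $\|J^{2N}\|$ in the \emph{continuous} $L^2$ norm by subtracting the equation for $u$, and then controls $\|F^N-J^{2N}\|=\|I_N(J^{2N})-J^{2N}\|$ via the interpolation estimate plus Bernstein on $S_{2N}$. In that setup the time term contributes $\|I_N(u_\tau)-u_\tau\|\le CN^{-s}\|\partial_x^s u_\tau\|$, which is where the $H^s_p$ hypothesis on $u_\tau$ enters quantitatively. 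Your route, by evaluating at the nodes and using $(\partial_\tau I_N u)(x_j)=u_\tau(x_j)$, makes the time term cancel exactly, so the bound you actually produce depends only on $\|u\|_{H^{s+2}}$; the hypothesis on $u_\tau$ is used only qualitatively (continuity for the pointwise substitution). Either bookkeeping is fine; yours is slightly cleaner in that it avoids the extra $J^{2N}\to F^N$ step, while the paper's has the minor advantage of keeping all estimates in the continuous $L^2$ setting.
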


The following result ensures that threshold condition (\ref{Ch4threshcondstab}) holds for $u^N$.

\begin{proposition}\label{threshold}
Fix T>0. Let $u$ be the solution of equation (\ref{ecu1}). Suppose that $\forall \tau\in[0,T]$, functions $u(\tau)$  and $u_{\tau}(\tau)$ are in $H^{s+2}_p$ and $H^{s}_p$ respectively.

Then, it exists a constant $M$ and $N_0\in \mathbb{N}$, such that $\forall N\geq N_0$ it holds:
\begin{equation}\label{Ch4ecucondthres}
\|(u^N)_x(\tau)\|_{\infty} \leq M, \ \ \tau\in[0,T].
\end{equation}
\end{proposition}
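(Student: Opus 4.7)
The plan is to compare the collocation solution $u^{N}$ with the trigonometric interpolant $V^{N}:=I_{N}(u)$ of the exact solution, and to control the $L^{\infty}$--norm of the spatial derivative of their difference via a bootstrap. Setting $e^{N}:=V^{N}-u^{N}\in S_{N}$, note first that $u^{N}$ is an exact solution of the collocation equations \eqref{Ch4colloc2}, so its residual $G^{N}$ in the sense of Theorem \ref{estabilidad} vanishes identically, while the residual of $V^{N}$ is exactly the $F^{N}$ bounded by Proposition \ref{Ch4Consistproposition} as $\max_{0\le\tau\le T}\|F^{N}\|\le C N^{-s}$. Moreover $e^{N}(0)=0$: indeed $V^{N}(\cdot,0)=I_{N}(u_{0})$ and, by the collocation initial condition together with uniqueness of the interpolant, $u^{N}(\cdot,0)=I_{N}(u_{0})$ as well. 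Writing $M_{1}$ for the $N$- and $\tau$-uniform bound on $\|(I_{N}u)_{x}\|_{\infty}$ provided by Proposition \ref{Ch4lemmainterpderiv}, the whole task reduces to showing that $\|(e^{N})_{x}\|_{\infty}$ is small for $N$ large.

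To that end, I would introduce the bootstrap time
\begin{equation*}
\tau_{N}^{\ast}:=\sup\bigl\{\tau\in[0,T]\;:\;\|(u^{N})_{x}(\sigma)\|_{\infty}\le 2M_{1}\ \text{for all}\ \sigma\in[0,\tau]\bigr\}.
\end{equation*}
Because the ODE system \eqref{Ch4expresfour} has smooth nonlinearity, $u^{N}\in C^{1}([0,T];S_{N})$, so $\tau\mapsto\|(u^{N})_{x}(\tau)\|_{\infty}$ is continuous; since it equals $\|(I_{N}u_{0})_{x}\|_{\infty}\le M_{1}<2M_{1}$ at $\tau=0$, we have $\tau_{N}^{\ast}>0$. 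On $[0,\tau_{N}^{\ast}]$ both $V^{N}$ and $u^{N}$ verify the threshold hypothesis \eqref{Ch4threshcondstab} with $M=2M_{1}$, so Theorem \ref{estabilidad} applied on that interval, using $e^{N}(0)=0$ and $J^{N}=F^{N}$, yields
\begin{equation*}
\max_{0\le\tau\le\tau_{N}^{\ast}}\|e^{N}(\tau)\|^{2}\le R(2M_{1})\int_{0}^{T}\|F^{N}(\sigma)\|^{2}\,d\sigma \le C_{1} N^{-2s}.
\end{equation*}

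Closing the bootstrap is then done with the inverse inequality $\|v_{x}\|_{\infty}\le C N^{3/2}\|v\|$ valid for every $v\in S_{N}$ (obtained by Cauchy--Schwarz on $v_{x}=\sum_{-N\le k\le N-1}ik\,\hat v_{k}e^{ikx}$ combined with Parseval). Applied to $e^{N}(\tau)\in S_{N}$, this gives $\|(e^{N})_{x}(\tau)\|_{\infty}\le C_{2}N^{3/2-s}$ uniformly on $[0,\tau_{N}^{\ast}]$, and hence
\begin{equation*}
\|(u^{N})_{x}(\tau)\|_{\infty}\le \|(V^{N})_{x}(\tau)\|_{\infty}+\|(e^{N})_{x}(\tau)\|_{\infty}\le M_{1}+C_{2}N^{3/2-s}.
\end{equation*}
Provided the regularity exponent satisfies $s>3/2$, one can choose $N_{0}$ large enough that $C_{2}N^{3/2-s}\le M_{1}/2$ for all $N\ge N_{0}$, which gives $\|(u^{N})_{x}(\tau)\|_{\infty}\le \tfrac{3}{2}M_{1}<2M_{1}$ strictly on $[0,\tau_{N}^{\ast}]$; continuity then forces $\tau_{N}^{\ast}=T$, and the proposition follows with $M=2M_{1}$.

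The central difficulty, and the step that has to be navigated carefully, is the \emph{circularity} between Theorem \ref{estabilidad} and the statement being proved: the stability estimate presupposes a uniform $L^{\infty}$ bound on $(u^{N})_{x}$, which is precisely what we are trying to establish. The bootstrap structure together with the inverse-inequality factor $N^{3/2}$ is what resolves this, and it is the gap between $N^{3/2}$ and $N^{-s}$ that pins down the regularity requirement on $u$ in the hypothesis of Proposition \ref{Ch4Consistproposition}; the pointwise continuity argument for starting and extending $\tau_{N}^{\ast}$ is the only other subtle ingredient.
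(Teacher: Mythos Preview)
Your proof is correct and follows essentially the same route as the paper: both compare $u^{N}$ with $I_{N}u$, invoke Proposition~\ref{Ch4lemmainterpderiv} for the uniform bound $M_{1}$ on $\|(I_{N}u)_{x}\|_{\infty}$, run a continuation argument with the threshold $2M_{1}$, apply the stability Theorem~\ref{estabilidad} and consistency Proposition~\ref{Ch4Consistproposition} on the bootstrap interval, and close with the $N^{3/2}$ inverse (Bernstein--Nikolskii) inequality against the $N^{-s}$ consistency rate. The only difference is presentational: the paper phrases the continuation as a two--stage proof by contradiction (first ruling out $\epsilon_{N_{n}}\to 0$, then $\epsilon^{\ast}<T$), whereas you give the equivalent direct bootstrap showing the bound self--improves from $2M_{1}$ to $\tfrac{3}{2}M_{1}$, forcing $\tau_{N}^{\ast}=T$.
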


\begin{theorem}\label{convergencia}(Convergence)

Let $u(\tau)$ be the solution of (\ref{ecu1}). Suppose that $u(\tau)$, $u_{\tau}(\tau)$ are respectively functions in $H^{s+2}_p$ and $H^{s}_p$ and continuous with respect $\tau\in[0,T]$.

Then, if $u^{N}(x,\tau)$ is the approximation obtained by the collocation method (\ref{Ch4colloc2}), it exists a constant
\begin{equation*}
M=M\left(\underset{0\leq \tau\leq T}{\max}\left\{ \left\|\frac{\partial^{s+1} u}{\partial x^{s+1}}\right\|, \left\|\frac{\partial^{s+2} u}{\partial x^{s+2}}\right\| , \ \left\|\frac{\partial^{s} u_{\tau}}{\partial x^{s}}\right\| \right\}\right),
\end{equation*}
and $N_0\in\mathbb{N}$ such that $\forall N\geq N_0$ it holds
\begin{equation*}
\underset {0\leq \tau \leq T}{\max} \left\{\|u(\tau)-u^{N}(\tau)\|\right\} \leq MN^{-s}.
\end{equation*}

\end{theorem}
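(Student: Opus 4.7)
The plan is to use the classical Lax-type argument: combine the stability and consistency results established above, together with a standard spectral approximation estimate, via a triangle inequality split of the total error.

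First I would write
\begin{equation*}
u(\tau)-u^{N}(\tau) = \bigl(u(\tau)-I_N u(\tau)\bigr) + \bigl(I_N u(\tau)-u^{N}(\tau)\bigr),
\end{equation*}
so that it suffices to estimate the two pieces separately. For the interpolation error $u-I_N u$, I would invoke the standard spectral estimate for trigonometric interpolation on $H^{s+2}_p$ (for instance \cite[(5.1.10)]{Canuto}), which yields $\|u(\tau)-I_N u(\tau)\|\le C\,N^{-s}\,\|u(\tau)\|_{H^s}$ uniformly in $\tau\in[0,T]$, with $C$ depending only on the regularity of $u$.

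For the second piece, I would apply the stability Theorem \ref{estabilidad} with $V^N=I_N u$ and $W^N=u^N$. Two facts have to be checked. First, the residuals: since $u^N$ satisfies the collocation equation (\ref{Ch4colloc2}) exactly, $G^N\equiv 0$, while by Proposition \ref{Ch4Consistproposition} the residual $F^N$ obtained when plugging $I_N u$ into the scheme satisfies $\max_{\tau\in[0,T]}\|F^N(\tau)\|\le M_1 N^{-s}$. Hence $\int_0^T\|J^N(\tau)\|^2 d\tau \le T M_1^2 N^{-2s}$. Second, the threshold condition (\ref{Ch4threshcondstab}): $\|(I_N u)_x\|_\infty$ is bounded uniformly in $\tau$ and $N$ by Proposition \ref{Ch4lemmainterpderiv} (with $s=1$), and $\|(u^N)_x\|_\infty$ is uniformly bounded for $N\ge N_0$ by Proposition \ref{threshold}. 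Taking the maximum gives a common $M$ that is admissible in Theorem \ref{estabilidad}. The initial error is zero because both $I_N u(\cdot,0)$ and $u^N(\cdot,0)$ interpolate $u_0$ at the collocation nodes $\{x_j\}$ and both lie in $S_N$, so $e^N(0)=I_N u_0 - u^N(\cdot,0)=0$.

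Combining these ingredients, Theorem \ref{estabilidad} yields
\begin{equation*}
\max_{0\le \tau\le T}\|I_N u(\tau)-u^N(\tau)\|^2 \le R\bigl(M\bigr)\,T M_1^2\, N^{-2s},
\end{equation*}
and together with the interpolation estimate and the triangle inequality one obtains the desired bound $\max_{\tau\in[0,T]}\|u(\tau)-u^N(\tau)\|\le M N^{-s}$ for $N\ge N_0$, with the constant $M$ depending on the quantities listed in the statement.

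The main obstacle is really the verification of the threshold hypothesis (\ref{Ch4threshcondstab}) for $u^N$, because the nonlinear term $C(u_x)^2$ in (\ref{ecu1}) forces the stability constant $R$ to depend on an a priori $L^\infty$ bound on $u^N_x$. That obstacle, however, is already removed by Proposition \ref{threshold}, so the remaining work amounts to carefully chaining the stability, consistency and interpolation bounds and checking that the constants can be chosen uniformly in $\tau$ and in $N\ge N_0$.
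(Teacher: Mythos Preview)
Your proposal is correct and follows essentially the same approach as the paper: the triangle-inequality split through $I_N u$, the interpolation estimate for $u-I_N u$, and the application of Theorem \ref{estabilidad} with $V^N=I_N u$, $W^N=u^N$ together with Propositions \ref{Ch4Consistproposition} and \ref{threshold} to control $\|I_N u - u^N\|$, noting $e^N(0)=0$ and $G^N\equiv 0$. The only minor slip is the citation: the interpolation estimate you need is \cite[(5.1.16)]{Canuto} rather than \cite[(5.1.10)]{Canuto}, which concerns the projection $P_N$.
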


\subsubsection{Comments about threshold condition in our financial problem.}

In the previous Subsection we have given general regularity conditions that guarantee the results of stability, consistency and convergence. We study now the regularity of the initial condition.

Note that $u(0)=u_0$ is explicitly given. This is relevant in Proposition \ref{threshold} (Threshold condition)
\begin{equation*}
\|(u^N)_x(0)\|_{\infty}=\left\|(I_N(u_0))_x\right\|_{\infty}\leq M_1
\end{equation*}
where $M_1$ is independent of $N$. In Theorem \ref{convergencia} (Convergence), we have to check:
\begin{equation*}
\underset {0\leq \tau \leq T}{\max} \left\{\|u(\tau)-u^{N}(\tau)\|\right\} \leq MN^{-s}.
\end{equation*}
which implies that we have to study $\|u(0)-u^{N}(0)\|=\|u_0-I_N(u_0)\|$.

In our problem, we invoke the pseudospectral method in different time steps (see Subsection \ref{OPTCEUtPmDPNV}). We solve equation (\ref{ecu1}) with different initial conditions which correspond to a certain function
\begin{equation*}
u_0=u_j(t_m,y_k,x), \ j\in\{1,w\},
\end{equation*}
where $t_m$ and $y_m$ are values from the time and number of shares meshes respectively and functions $u_j(t,y,x), \ j\in\{1,w\}$ were defined in (\ref{Ch4ecunolinealdeftercam}).

Functions $u_j(t,y,x), \ j\in\{1,w\}$ were constructed from $H_j(t,y,\hat{x}), j\in\{1,w\}$ after performing the odd-even extension, imposing periodic boundary conditions and a change of variable to $[0,2\pi]$.

For $t_m=T$, function $H_w(T,y_k,x)$ is continuous but not differentiable and, in general, the odd-even extension procedure does not  give differentiable functions, even when applied to differentiable functions.

\

\noindent\textbf{1. Cases $u_0=u_j(t_m,y_k,x), \ j\in\{1,w\}, \ t_m\neq T$ and $u_0=u_1(T,y_k,x)$:}

For $t_m\in[0,T), \ j\in\{1,w\}$, the conditions
\begin{equation}
\|u_0-I_N(u_0)\|\leq MN^{-2},  \quad \left\|(I_N(u_0))_x\right\|_{\infty}\leq M_1
\end{equation}
have a justification based in the following result.

\begin{proposition} \label{penulprop}
Let $f(x), \ x\in\left(0,\frac{\pi}{2}\right)$ be a twice derivable function such that $f'(0^{+})=0$ and $f'\left(\frac{\pi^{-}}{2}\right)$, $f''\left(0^{+}\right)$, $f''\left(\frac{\pi^{-}}{2}\right)$ exist.

Let $f^e(x)$ be the function which corresponds to the odd-even extension given by (\ref{Ch4extenfor1}).

It holds that:
\begin{equation*}
\|f^e-I_N(f^e)\|\leq K_1N^{-s}, \ s=2.
\end{equation*}
\end{proposition}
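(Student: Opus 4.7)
The plan is to show that $f^e\in H^2_p(0,2\pi)$ and then invoke the standard trigonometric interpolation estimate: for any $v\in H^s_p$ with $s\geq 1$,
\begin{equation*}
\|v - I_N(v)\| \leq C\, N^{-s}\|v\|_{H^s_p},
\end{equation*}
see \cite[Ch.~5]{Canuto}. Applied to $v=f^e$ with $s=2$, this immediately yields the stated bound. Thus the real content of the argument is verifying the $H^2$-regularity of $f^e$.

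With $\hat{x}_{\min}=0$ and $\hat{x}_{\max}=\pi/2$ the period $4\hat{x}_{\max}-4\hat{x}_{\min}$ equals $2\pi$ and formula (\ref{Ch4extenfor1}) reduces to
\begin{equation*}
f^e(x)=\begin{cases} f(x), & x\in(0,\pi/2), \\ 2f(\pi/2)-f(\pi-x), & x\in(\pi/2,\pi), \\ 2f(\pi/2)-f(x-\pi), & x\in(\pi,3\pi/2), \\ f(2\pi-x), & x\in(3\pi/2,2\pi). \end{cases}
\end{equation*}
First I would check that $f^e$ is $C^1$ and $2\pi$-periodic by computing one-sided limits of $f^e$ and $(f^e)'$ at the four gluing points. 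Continuity is immediate. At $x=\pi/2$ and $x=3\pi/2$ both one-sided derivatives equal $\pm f'(\pi/2^-)$ and match automatically. At $x=\pi$ they read $f'(0^+)$ from the left and $-f'(0^+)$ from the right, and the same pair appears at the periodic boundary $x=0\equiv 2\pi$. Both matchings therefore reduce to $f'(0^+)=0$, which is precisely the hypothesis.

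Next I would show that the pointwise second derivative of $f^e$, which equals $\pm f''$ evaluated at the appropriate argument on each piece, extends to a bounded function on $(0,2\pi)$ with at most two jump discontinuities, at $\pi/2$ and $3\pi/2$. Indeed, at $x=\pi$ and $x=0\equiv 2\pi$ the one-sided limits of $(f^e)''$ agree ($-f''(0^+)$ and $f''(0^+)$ respectively), while at $x=\pi/2$ and $x=3\pi/2$ they are $\pm f''(\pi/2^-)$, so in general a bounded jump occurs. Because $(f^e)'$ has already been shown to be continuous, no Dirac contributions appear when differentiating once more in the distributional sense, and the distributional second derivative coincides a.e.\ with the pointwise one. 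Hence $(f^e)''\in L^{\infty}(0,2\pi)\subset L^2(0,2\pi)$, which together with continuity of $f^e$ and $(f^e)'$ gives $f^e\in H^2_p(0,2\pi)$. The interpolation estimate above with $s=2$ then produces $\|f^e-I_N(f^e)\|\leq K_1 N^{-2}$ as required.

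The main obstacle is the careful bookkeeping of one-sided limits at the four gluing points: one must verify that $f'(0^+)=0$ is exactly the condition needed to secure $C^1$-periodicity, and that matching of $(f^e)''$ at the periodic boundary follows from the same condition, while the unavoidable jumps of $(f^e)''$ at $\pi/2$ and $3\pi/2$ are harmless for $H^2$-membership precisely because $(f^e)'$ remains absolutely continuous. This also explains why the order stops at $s=2$: improving to $s\geq 3$ would require additional vanishing of $f''$ at $\pi/2$, which is not assumed.
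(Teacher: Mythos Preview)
Your proposal is correct and follows essentially the same route as the paper: write out $f^e$ piecewise, verify that $f'(0^{+})=0$ is exactly what is needed for $(f^e)'$ to glue continuously at $x=\pi$ and at the periodic boundary, observe that $(f^e)''$ is piecewise bounded with possible jumps only at $\pi/2$ and $3\pi/2$, conclude $f^e\in H^2_p$, and invoke the standard interpolation estimate. Your bookkeeping of the one-sided limits is in fact more detailed than the paper's (which just records the piecewise formulas for $(f^e)'$ and $(f^e)''$ and notes the exceptional points), and your closing remark on why the order cannot exceed $s=2$ is a useful addition not present in the original.
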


Up to the change of variable
\begin{equation*}
x=\hat{x}_{\min}+\frac{4\hat{x}_{\max}-4\hat{x}_{\min}}{2\pi}x,
\end{equation*}
note that for $t_m\neq T$, function $H_j(t_m,y_k,\hat{x}), \ j\in\{1,w\}$ defined in $\hat{x}\in[\hat{x}_{\min},\hat{x}_{\max}]$ plays the role of $f(x)$ and $u_j(t_m,y_k,x)=H^e_j(t_m,y_k,\hat{x})$ plays the role of $f^e(x)$ of the previous Proposition.

The result has to be applied in the limit $\hat{x}_{\min}\rightarrow -\infty$ and for $H_j$ regular enough. For the case when there are no transaction costs, the regularity and that
\begin{equation*}
\underset{\hat{x}\rightarrow -\infty}{\lim} \frac{\partial H_j}{\partial \hat{x}}(t_m,y_k,\hat{x})=0,
\end{equation*}
can be explicitly checked. We conjecture that the conditions hold when transaction costs appear.

For function $u_0=u_1(T,y_k,x)$ the same argument can be applied.

\

\noindent\textbf{2. Case $u_0=u_w(T,y_k,x)$:}

This initial condition has to be studied independently.

\begin{proposition}\label{ultimprop}

For $u_0=u_w(T,y_k,x)$, it holds that
\begin{equation*}
\|(I_N(u_0))_{x}\|_{\infty} \leq KN^{\frac{3}{2}}\|I_N(u_0)-u_0\|+C.
\end{equation*}
\end{proposition}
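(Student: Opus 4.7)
The plan is to apply the inverse (Bernstein--Nikolskii) inequality for trigonometric polynomials to $I_N(u_0) \in S_N$. For any $p \in S_N$,
\begin{equation*}
\|p_x\|_\infty \le C\, N^{1/2}\,\|p_x\|_{L^2} \le C\, N^{3/2}\|p\|_{L^2},
\end{equation*}
which combines the inverse Sobolev embedding $\|q\|_\infty \le C\sqrt{N}\|q\|_{L^2}$ on $S_N$ (applied to $q = p_x \in S_N$) with Bernstein's inequality $\|p_x\|_{L^2} \le N\|p\|_{L^2}$. Since $I_N(u_0) \in S_N$, this yields an upper bound in terms of $\|I_N(u_0)\|_{L^2}$, which I will control via the triangle inequality and the boundedness of $u_0$.

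Taking $p = I_N(u_0)$ and splitting by the triangle inequality,
\begin{equation*}
\|(I_N u_0)_x\|_\infty \le C\,N^{3/2}\|I_N u_0\|_{L^2} \le C\,N^{3/2}\|I_N u_0 - u_0\|_{L^2} + C\,N^{3/2}\|u_0\|_{L^2}.
\end{equation*}
The first term is exactly the desired $K\,N^{3/2}\|I_N u_0 - u_0\|$ contribution. For the second, I observe that $u_0 = u_w(T, y_k, \cdot)$ is bounded on $[0, 2\pi]$: the function $H_w(T, y_k, \hat x)$ in formula (\ref{Ch4pde2camfin2}) is uniformly bounded on $[\hat x_{\min}, \hat x_{\max}]$ for fixed $y_k$ because the liquidation values $c(y, \exp(\hat x))$ and $c(y-1, \exp(\hat x)) + K$ are continuous on the compact interval; the odd-even extension (\ref{Ch4extenfor1}) preserves this uniform bound; and the affine change of variable (\ref{Ch4ecunolinealdeftercam}) transplants it to $[0, 2\pi]$. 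Hence $\|u_0\|_{L^2} \le \sqrt{2\pi}\,\|u_0\|_\infty =: C_0$ is a constant independent of $N$.

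The main obstacle is the interpretation of the constant $C$ in the stated bound: the naive argument above produces an additive term $C\,N^{3/2}\,C_0$ rather than a truly $N$-independent constant. The most natural reading is that $C$ in the proposition absorbs this $N^{3/2}$ factor in the manner useful for the subsequent stability analysis, so that the bound simply encodes the two estimates $\|(I_N u_0)_x\|_\infty \le C'N^{3/2}$ (always valid on $S_N$ for bounded $u_0$) and $\|(I_N u_0)_x\|_\infty \le K\,N^{3/2}\|I_N u_0 - u_0\|$ (which is the informative part when the interpolation error is not too small). A genuinely $N$-uniform constant would require a refined decomposition that splits off a fixed low-degree trigonometric polynomial $v \in S_{N_0}$ approximating the smooth part of $u_0$ (with $\|v_x\|_\infty$ bounded and $I_N v = v$), applying the inverse inequality only to $I_N(u_0) - v \in S_N$, but doing so runs into the fact that $u_0$ has an isolated jump at $\hat x = \log K$ so that $\|u_0 - v\|_{L^2}$ cannot be made arbitrarily small by any fixed $v$.
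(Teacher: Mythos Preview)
Your argument has a genuine gap: the additive term you obtain is $C\,N^{3/2}\|u_0\|_{L^2}$, which is not $N$-independent, and this matters because the whole point of the proposition in context is to feed into the threshold condition of Proposition~\ref{threshold}, where one needs $\|(I_N u_0)_x\|_\infty$ bounded uniformly in $N$ once the interpolation error decays faster than $N^{-3/2}$. Your own suggestion to split off a \emph{fixed} $v\in S_{N_0}$ cannot work, as you note; but the correct decomposition uses the $N$-dependent truncation $P_N(u_0)$ instead.

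The missing idea is twofold. First, write
\[
\|(I_N u_0)_x\|_\infty \le \|(I_N u_0)_x-(P_N u_0)_x\|_\infty + \|(P_N u_0)_x\|_\infty
\]
and apply Bernstein--Nikolskii only to the difference $I_N u_0 - P_N u_0\in S_N$, bounding it by $K_1 N^{3/2}\|I_N u_0 - P_N u_0\|$; since $u_0\in H^1_p$ (note: $u_0$ is continuous with a corner at the strike, not a jump --- it is $u_{0,x}$ that has the jump), the aliasing error satisfies $\|I_N u_0 - P_N u_0\|\le K_2\|I_N u_0 - u_0\|$. Second, and this is the key step you are missing, truncation commutes with differentiation so $(P_N u_0)_x = P_N(u_{0,x})$, and because $u_{0,x}$ is of bounded variation with only jump discontinuities, the Gibbs analysis (see \cite{Canuto}) gives $\|P_N(u_{0,x})\|_\infty\le C$ with $C$ independent of $N$: the partial Fourier sums of a BV function with jumps overshoot by a fixed universal factor, not by something growing with $N$. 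This is precisely what produces the genuinely $N$-free constant $C$ in the statement.
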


Therefore, the only thing that remains to check is the behaviour of $\|I_N(u_0)-u_0\|$. We empirically study the $L^2$  interpolation error.  We compute, for $N=\{128,256,512,1024,2048\}$,
\begin{equation*}
\|u_1(T,y_k,x)-I_N(u_1(T,y_k,x))\|, \quad \|u_w(T,y_k,x)-I_N(u_w(T,y_k,x))\|
\end{equation*}
with the Matlab routine \textit{quad}. The empirical orders of convergence of the error are -2.95 for $u_1$ and $-1.85$ for $u_w$. This implies that the regularity condition for $u_0$ in Proposition \ref{threshold} is fulfilled for $j\in\{1,w\}$ and suggest that the expected convergence rate of the numerical solution $H^{\textbf{N}}_j, j\in\{1,w\}$ of our problem is
\begin{equation*}
\|H_w-H_w^{N}\|\leq CN^{-2}, \quad  \|H_1-H_1^{N}\|\leq CN^{-s}, \ s\geq 2
\end{equation*}
in the spatial variable.

\subsubsection{Localization error}

When we extend the function twice and we impose periodic boundary conditions, we are modifying the real terminal conditions of the partial differential equation associated with the No Transaction region and we are inducing a numerical error, called the \textit{localization error}.

If the spatial variable is not bounded, a way of studying the effect of the \textit{localization error} in a fixed domain $D$ (approximation domain) is given in \cite{Breton}. The procedure would be to check that the difference of the exact solution of the periodic problem and the exact solution of the real problem on $D$ converges to 0 as we increase the limits of the spatial variable before proceeding to the periodic extension.

\begin{remark}
\normalfont{
Note that the convergence and the localization error analysis are totally independent. In the convergence analysis we have proved that the numerical solution converges to the exact solution of the periodic problem.

In the localization error analysis we will prove that the exact solution of the periodic problem converges to the exact solution of the original problem on the approximation domain for increasing size of the computational domain.

The analysis will be performed over the partial differential equation
\begin{equation}\label{Ch4ecunolinealprimercam}
\frac{\partial Q_j}{\partial t}+\left(\alpha-\frac{\sigma^2}{2}\right) \frac{\partial Q_j}{\partial \hat{x}}+\frac{1}{2}\sigma^2 \frac{\partial^2 Q_j}{\partial \hat{x}^2}=0, \quad j\in\{1,w\},
\end{equation}
which corresponds to equation (\ref{Ch4ecunolinealdef}) where $H_j(t,y,\hat{x})=\log \left(Q_j(t,y,\hat{x})\right), \quad j\in\{1,w\}$ or, equivalently, to equation (\ref{Ch4ecdeffNT}) after the change $\hat{x}=\log(S)$.}
\end{remark}

We recall the bankruptcy function introduced in Subsection \ref{Ch4intrbankstate}. For a fixed $E=E_0$, $X=X_0$, we are going to work with functions $Q^{B_{E_0,X_0}}_j, \ \quad j\in\{1,w\}$.

We also recall Proposition \ref{Ch4Qbounded} which stated that it exists $M=M(X_0,E_0)>0$ such that
\begin{equation*}
0\leq Q^{B_{E_0,X_0}}_j \leq M, \quad j\in\{1,w\}.
\end{equation*}

Equation (\ref{Ch4ecunolinealprimercam}) has to be solved for each value of $y$, so let $y=y_0$ and $t=t_0\in[0,T]$. We define $\phi(\hat{x}) = Q^{B_{E_0,X_0}}_j(t_0,y_0,\hat{x}), \quad \text{where} \ j=1 \ \text{or} \ j=w $.

\begin{definition}\label{Ch4definitionlocalerr}
For a fixed $L>0$, we define the approximation domain $[-L,L]$. Let ${\hat{x}^{*}}>0$ be such that $[-L, L]\subset[-\hat{x}^{*},\hat{x}^{*}]$. We define the function
\begin{equation*}
\phi^{\hat{x}^{*}}_p(\hat{x})=
\left\{
\begin{aligned}
& \phi(x) && \text{if} \ \hat{x}\in[-\hat{x}^{*}, \hat{x}^{*}], \\
& 2\phi^{\hat{x}^{*}}_p(\hat{x}_0)-\phi^{\hat{x}^{*}}_p(2{\hat{x}^{*}}-\hat{x}) && \text{if} \ \hat{x}\in[\hat{x}^{*}, 3\hat{x}^{*}],  \\
& \phi^{\hat{x}^{*}}_p(6\hat{x}^{*}-\hat{x}) && \text{if} \ \hat{x}\in[3\hat{x}^{*}, 7\hat{x}^{*}],  \\
& \phi^{\hat{x}^{*}}_p(\hat{x} \ mod \left([-\hat{x}^{*}, 7 \hat{x}^{*}]\right))  && \text{if} \ \hat{x}\notin[3\hat{x}^{*}, 7\hat{x}^{*}] . \end{aligned}
\right.
\end{equation*}
\end{definition}

\begin{theorem} \label{ultimtheor} Let $R^{\hat{x}^{*}}_p(\hat{x},t)$ and $R(\hat{x},t)$ be the solutions of
\begin{equation*}
\frac{\partial Q}{\partial t}+\left(\alpha-\frac{\sigma^2}{2}\right) \frac{\partial Q}{\partial x}+\frac{1}{2}\sigma^2 \frac{\partial^2 Q}{\partial x^2}=0,
\end{equation*}
subject to $R^{\hat{x}^{*}}_p(\hat{x},t_0)=\phi^{\hat{x}^{*}}_p(\hat{x})$ and $R(\hat{x},t_0)=\phi(\hat{x})$.

Let $L>0$ and $t\leq t_0$. Then, for any $\epsilon>0$ it exists $\hat{x}_{\epsilon}>0$ such that $\forall \hat{x}^{*}\geq \hat{x}_{\epsilon}$ it holds that
\begin{equation*}
\left|R^{\hat{x}^{*}}_p(\hat{x},t)-R(\hat{x},t)\right| \leq \epsilon, \quad  \hat{x}\in[-L,L].
\end{equation*}
\end{theorem}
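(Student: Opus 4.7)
The plan is to exploit the constant-coefficient linearity of the governing PDE and represent both $R$ and $R^{\hat{x}^{*}}_p$ as convolutions against the same Gaussian kernel, so that the statement reduces to a Gaussian tail estimate. Setting $\tau = t_0 - t$, $a = \sigma^2/2$ and $b = \alpha - \sigma^2/2$, the PDE becomes the forward convection-diffusion equation $\partial_\tau Q = a\,\partial_{xx} Q + b\,\partial_x Q$, whose fundamental solution on $\mathbb{R}$ is the shifted Gaussian
\begin{equation*}
G(z,\tau) = \frac{1}{\sqrt{4\pi a\tau}}\exp\!\left(-\frac{(z+b\tau)^2}{4a\tau}\right).
\end{equation*}
I would first verify boundedness of both initial data: Proposition \ref{Ch4Qbounded} yields $|\phi(\hat{x})| \le M$ on $\mathbb{R}$, and inspection of Definition \ref{Ch4definitionlocalerr} (a single odd reflection, followed by an even reflection, followed by periodic translation) shows that the procedure preserves boundedness, with $\|\phi^{\hat{x}^{*}}_p\|_\infty \le 3M$. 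Tychonoff's uniqueness theorem for bounded solutions of the Cauchy problem then forces both functions to coincide with the corresponding Gaussian convolutions,
\begin{equation*}
R(\hat{x},t) = \bigl(G(\cdot, t_0-t) * \phi\bigr)(\hat{x}),\qquad R^{\hat{x}^{*}}_p(\hat{x},t) = \bigl(G(\cdot, t_0-t) * \phi^{\hat{x}^{*}}_p\bigr)(\hat{x}).
\end{equation*}

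Subtracting and using that $\phi^{\hat{x}^{*}}_p(\hat{x}') = \phi(\hat{x}')$ for $\hat{x}' \in [-\hat{x}^{*},\hat{x}^{*}]$, the difference collapses to a tail integral,
\begin{equation*}
R^{\hat{x}^{*}}_p(\hat{x},t) - R(\hat{x},t) = \int_{|\hat{x}'| \ge \hat{x}^{*}} G(\hat{x}-\hat{x}', t_0 - t)\bigl(\phi^{\hat{x}^{*}}_p(\hat{x}') - \phi(\hat{x}')\bigr)\,d\hat{x}'.
\end{equation*}
For $\hat{x} \in [-L,L]$ and $\hat{x}^{*} > L$, substituting $z = \hat{x}'-\hat{x}$ confines the domain of integration to $|z| \ge \hat{x}^{*} - L$, so
\begin{equation*}
\bigl|R^{\hat{x}^{*}}_p(\hat{x},t) - R(\hat{x},t)\bigr| \le 4M \int_{|z| \ge \hat{x}^{*}-L} G(z, t_0-t)\,dz.
\end{equation*}
Since $t \le t_0$ is fixed, $\tau = t_0 - t$ is a fixed nonnegative number (the case $\tau = 0$ being trivial, as $R$ and $R^{\hat{x}^{*}}_p$ both coincide with $\phi$ on $[-L,L]$), and the Gaussian tail satisfies $\int_{|z|\ge r} G(z,\tau)\,dz \to 0$ as $r \to \infty$. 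Given $\epsilon > 0$, choosing $\hat{x}_\epsilon$ so that $4M \int_{|z|\ge \hat{x}_\epsilon - L} G(z,t_0-t)\,dz \le \epsilon$ yields the claimed bound, uniformly in $\hat{x} \in [-L,L]$.

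The main obstacle is justifying the full-line convolution representation for $R^{\hat{x}^{*}}_p$. Although $\phi^{\hat{x}^{*}}_p$ was constructed as a periodic function for the pseudospectral scheme, in the statement of the theorem it plays the role of a \emph{bounded} Cauchy datum on all of $\mathbb{R}$, and I need to identify $R^{\hat{x}^{*}}_p$ as the unique bounded solution with that datum (so that it equals the heat-kernel convolution). This is where boundedness of $\phi^{\hat{x}^{*}}_p$ derived from Proposition \ref{Ch4Qbounded}, together with Tychonoff's uniqueness in the class of bounded solutions, does the essential work. Once this identification is in place, the remainder of the proof is the elementary Gaussian tail estimate above.
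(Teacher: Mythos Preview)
Your proof is correct and follows essentially the same route as the paper: write both solutions via the Gaussian fundamental solution, subtract, use that the initial data agree on $[-\hat{x}^{*},\hat{x}^{*}]$ so only the tails survive, bound the integrand by a constant multiple of $M$ using Proposition~\ref{Ch4Qbounded} and the structure of the odd-even extension, and then invoke the decay of Gaussian tails. The only differences are cosmetic: the paper works directly with the kernel $\Theta(\hat{x}',\hat{x},t,t_0)$ and obtains the constant $3M$ (claiming $0\le\phi^{\hat{x}^{*}}_p\le 2M$), whereas you substitute $z=\hat{x}'-\hat{x}$ and get $4M$; and you make explicit the appeal to Tychonoff uniqueness to justify the convolution representation, which the paper takes for granted.
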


A numerical example of this result is presented in Subsection \ref{OPTCEUeclecc}.

\section{Numerical results}\label{OPTCEUmda}

We first note that, when no transaction costs are present ($\lambda=\mu=0$), the problem is explicitly solvable (see \cite{Davis2} and \cite{Karatzas2}). The objective functions and the optimal trading strategies are explicitly computable and $p_w(t,S)$ is indeed the Black-Scholes price of the option.

Several temporal implementations for the Fourier method have been tested: explicit Euler, the implicit midpoint rule with Newton method to solve the nonlinear equation and the linearly implicit midpoint rule.

We have chosen the last one because it gave the best results when we compared the error convergence and computational cost. This implementation is given by:

\begin{equation*}
\frac{\hat{U}^{n+1}-\hat{U}^{n}}{\Delta t}=\text{L}\left(\frac{\hat{U}^{n+1}+\hat{U}^n}{2}\right)+\text{NL}\left(\frac{3}{2}\hat{U}^{n}-\frac{1}{2}\hat{U}^{n-1}\right)
\end{equation*}

Prior to the analysis of the error convergence, we make some remarks.  When there are no transaction costs ($\lambda=\mu=0)$, we can explicitly check \cite{Davis2}, that as $S\rightarrow 0$ it holds that $y^{\mathscr{S}}_j\rightarrow \infty$.

In our numerical method, we need to employ quite small values for the logarithmic stock price. Therefore, up to a certain level, the numerical approximation of the Buying/Selling frontiers may reach the limit of the computational domain of the number of shares and we will have to truncate.

A numerical error is generated in steps 2-4 of the algorithm of Section \ref{OPTCEUtPmDPNV}, where we have to find the optimal trading strategy and recompute $H^{\textbf{N}}_j, \ j\in\{1,w\}$ in the Buying/Selling regions. This error affects the left side of the stock price domain, where the smallest stock values are. This error can be controlled (or even removed) just increasing the domain of the number of shares, something that progressively moves it more to the left of the domain of the stock until it disappears. Numerical experiments show that for $y_{\max}$ covering all the values which correspond to the approximation domain, this error has no perceptible effects in the option price.

We also mention that for $t=T$, function $H_w(T,y,x)$ is continuous but not differentiable. When approximating the function by trigonometric polynomials, this causes some oscillations, known that the Gibbs phenomena. The regularization effect of the partial differential equation smoothes out the possible singularities very fast, so the true value of function $H_w$ (and therefore the corresponding optimal trading strategies) can be rapidly approximated by its truncated Fourier series. This could be expected from the results of \cite{Canuto}. Numerical experiments suggest that the smoothing velocity depends on $\Delta t$ and $\Delta \hat{x}$.

\subsection{Error convergence, localization error and computational cost}\label{OPTCEUeclecc}

When no transaction costs are present ($\lambda=\mu=0$), we have explicit formulas (see \cite{Davis2}) to check the error behaviour of the numerical method.

For studying the error convergence, we fix an approximation domain $[L_{\min},L_{\max}]$ and a computational domain $[\hat{x}_{\min},\hat{x}_{\max}]$. We define a set of test points $\left\{\hat{x}_p\right\}_{p=0}^{N_{p}}$ (of the approximation domain):
\begin{equation*}
\hat{x}_{p}=L_{\min}+p\frac{L_{\max}-L_{\min}}{N_p}, \quad p=0,1,2,...,N_p.
\end{equation*}
and in this set of points we study the time, spatial and number of shares error convergence.

\begin{definition}
Let $f(t,y,\hat{x})$ denote the exact value of a function which can either be $H_j, \ j\in\{1,w\}$, the option price $p_w$ or the optimal trading strategies $y^{\mathscr{B}}_j, y^{\mathscr{S}}_j, \ j\in\{1,w\}$. We recall that $S=\exp(\hat{x})$.

Let $f^{\textbf{N}}$ denote the numerical approximation subject to $\textbf{N}=(N_t,N_y,N_{\hat{x}})$, which were given in Definition \ref{Ch4defindiscretvar}.

We globally define the mean square error of  the numerical approximation of function $f^{\textbf{N}}$ as
\begin{equation}
\text{RMSE}\left(f^{\textbf{N}}\right)=\sqrt{\frac{1}{N_p+1}\sum_{p=0}^{N_p}\left(f(\hat{x}_p)-f^{\textbf{N}}(\hat{x}_p)\right)^2}.
\end{equation}
\end{definition}

We fix the parameter values $\sigma$=0.1, $\alpha=0.1$, $r$=0.085, the strike and maturity ($t=0$ today)
\begin{equation*}
\left\{
\begin{aligned}
 \hat{x}_{K} &= 2 \ \ \text{(K=7.389)}, \\
 T &= 0.5 \ \ \text{(years)}.
\end{aligned}
\right.
\end{equation*}

For clarifying purposes, we point that this corresponds to an option with strike $K=7.389$ and that we compute several functions for stock prices $S_p=e^{\hat{x}_p}$ which vary from  $2.718$ to $20.085$ (i.e. for options At and (very) In/Out the money). For the number of shares, we set $y \in [0 , \ 2]$.

Unless explicitly mentioned (Subsection \ref{OPTCEUecleccLE}), we take $[L_{\min},L_{\max}]=[1,3]$ and $[\hat{x}_{\min},\hat{x}_{\max}]=[-5,5]$. The limits of the computational domain have been taken big enough in order to minimize the effect of the localization error.

\subsubsection{Spatial Error convergence}

We take $\Delta y = 2\ldotp5 \cdot 10^{-3}, \quad  \Delta t = 5 \cdot 10^{-5}$. We compute the RMSE for $N_x=\{50,100,200,400,800,1600\}$ and a fixed $N_p=10$ (all the nodes of the approximation domain for $N_{\hat{x}}=50$).

We restrict to $t=y=0$, i.e., the functions that are employed to compute the option price for a maturity of $0.5$ years.

 In Figure \ref{Ch4conespfunyestH1Hw} we represent the values in logarithmic scale of $\text{RMSE}$ of the numerical approximation to the value function $H^{\textbf{N}}_j, \ j\in\{1,w\}$ (left) and to the value of the buying/selling frontiers ($y^{\mathscr{B}}_j=y^{\mathscr{S}}_j, \ j\in\{1,w\}$) (right). We plot $j=1$ (solid-red) and $j=w$ (solid-blue).

\begin{figure}[h]
\centering
\includegraphics[width=12cm,height=5 cm]{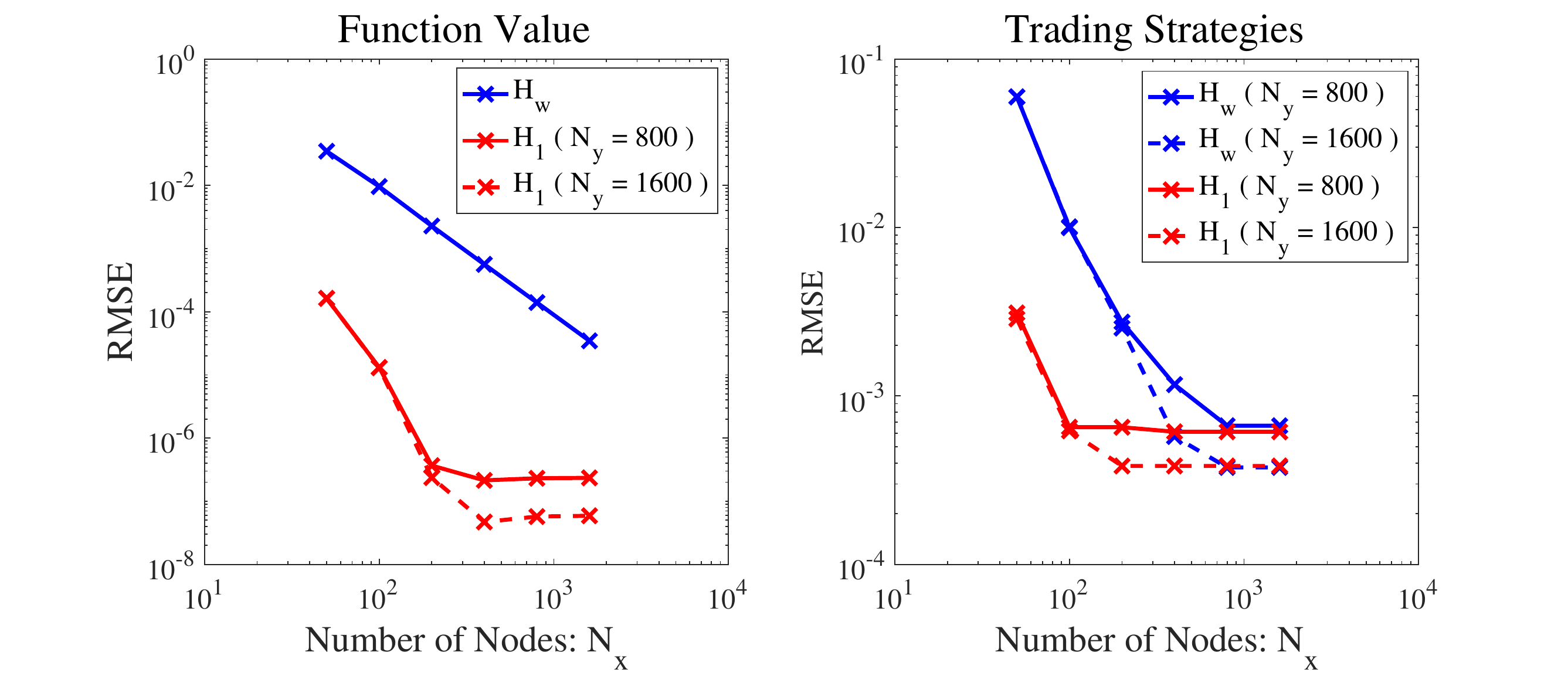}
\caption[Spatial error convergence of functions and trading strategies]{\label{Ch4conespfunyestH1Hw} Spatial error convergence of functions $H^{\textbf{N}}_1$ (left-red), $H^{\textbf{N}}_w$ (left-blue), $y^{\mathscr{B}}_1$ (right-red) and $y^{\mathscr{B}}_w$ (right-blue) in logarithmic scale.}
\end{figure}

In the left side, the slope of the regression lines (solid) are $-2\ldotp004$ for $H_w$ and $-4\ldotp397$ for $H_1$ (3 first points). We realize another experiment with $\Delta y = 1\ldotp25 \cdot 10^{-3}$ (dashed-red) to check that the lowest value of the error reached by $H_1$ (solid-red) was given by the value of $\Delta y$. $H_1$ and $H_w$ seem to present different error behaviour. This results are consistent with the upper bounds of the error convergence rate mentioned just after Theorem \ref{convergencia} (Comments about threshold condition).

In the right side, the slope of the regression line of $y^{\mathscr{B}}_w$ (solid-blue) is $-1.87$. We recall that for $t=T$, there is a jump discontinuity at $x=x_K$. We carry out a second experiment with $\Delta y = 1\ldotp25 \cdot 10^{-3}$ (dashed-blue, dashed-red) to check that the lowest value reached by the error is marked by the size of the mesh of $y$. This lowest value is reached very soon by $y^{\mathscr{B}}_1$ (solid-left).

Concerning the option price, given by (\ref{Ch4numericalopprice}), the error of function value of $H_w$ is much bigger than that of $H_1$, so the error convergence of $\text{RMSE}(p^{\textbf{N}}_w)$ is the same of function value $H_w$ (left-blue) in Figure \ref{Ch4conespfunyestH1Hw}.

The size of the error at point $\hat{x}$ depends mostly in the relative position of $\hat{x}$ with respect to $\hat{x}_K$, where the highest errors occur. For $N_x=1600$ and $S=K=7.389$ dollars, the contract value is $0.3936$ and the absolute error has been $1.09 \cdotp 10^{-4}$. We also mention that for $S\in[0.0067,4.08]$, the real option prices are  $0\sim 10^{-16}$ (dollars) and the numerical method gives $\sim 10^{-11}$. For $S>9.025$ (option prices bigger than $1.9$ dollars), the absolute errors are below $10^{-6}$.

\subsubsection{Temporal Error convergence}

In this experiment, we take the same values as the previous one for the model parameters, the strike and the computational/approximation domains. We fix $\Delta \hat{x} =6\ldotp25 \cdotp 10^{-3}$ ($N_{\hat{x}}=1600$) and $\Delta y= 2\ldotp5 \cdotp 10^{-3}$.

Value $N_t=\{1,2,4,8,16,25,50,100,200,400,800,1600,3200,10000\}$ (big values for $\Delta t$) because the size of the temporal error in this model is very small compared with other errors. For the set of test points, we fix $N_p=320$ (all the nodes of the approximation domain for $N_{\hat{x}}=1600$).

Figure \ref{Ch4contemfunyestH1Hw} shows in logarithmic scale the number of temporal nodes versus the RMSE for functions $H^{\textbf{N}}_j, j\in \{1,w\}$ (left) and the optimal trading strategies (right).

\begin{figure}[h]
\centering
\includegraphics[width=12cm,height=5 cm]{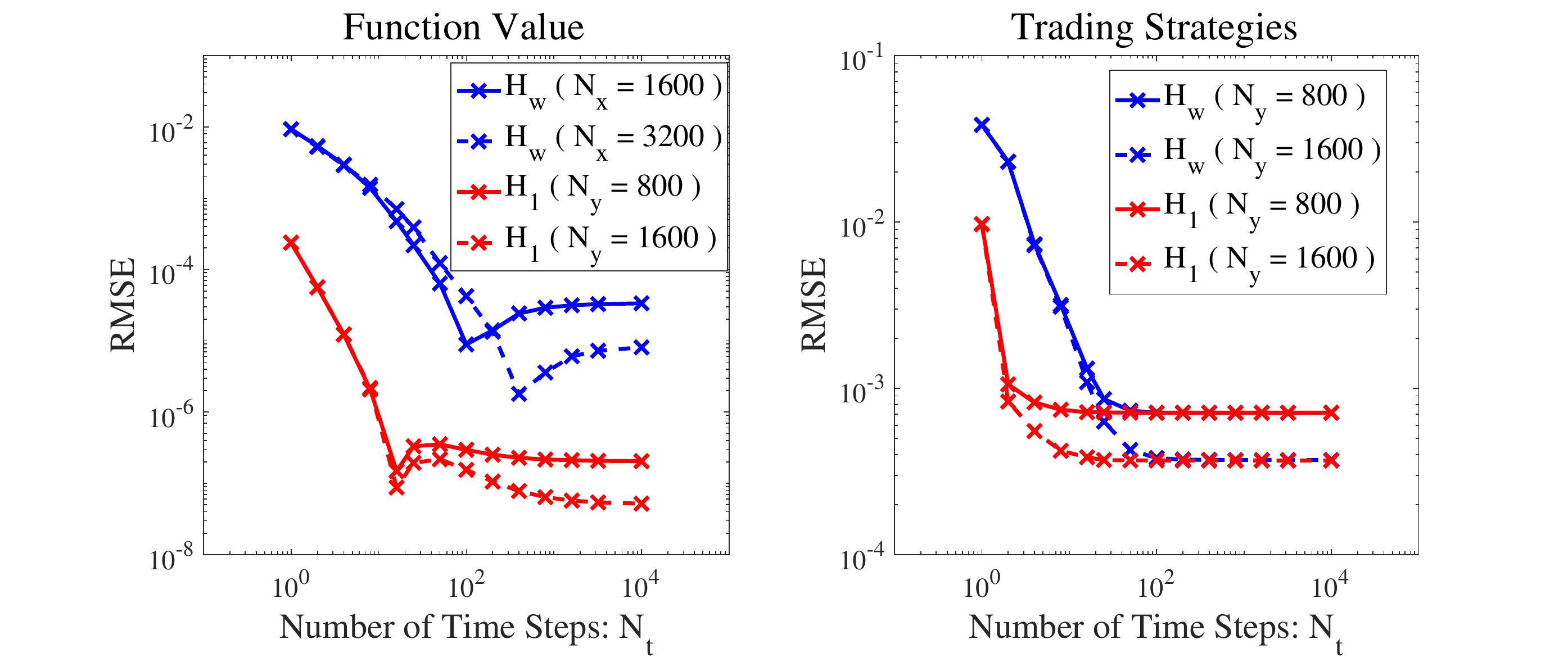}
\caption[Temporal error convergence of functions and trading strategies]{\label{Ch4contemfunyestH1Hw} Temporal error convergence of functions $H^{\textbf{N}}_1$ (left-red), $H^{\textbf{N}}_w$ (left-blue), $y^{\mathscr{B}}_1$ (right-red) and $y^{\mathscr{B}}_w$ (right-blue) in logarithmic scale.}
\end{figure}

In the left side, the slopes of the regression lines are $-2.27$ for $H_1$  (4 first points, solid-red) and $-1.26$ for $H_w$ (7 first points, solid-blue). For function $H_w$ and $N_t$ small, we may not wipe out completely the Gibbs effect and, for bigger values of $N_t$, we reach very soon the error limit marked by $\Delta \hat{x}$. Perhaps this is the reason why we do not observe an order 2 in time for function $H_1$. We carry out a second experiment halving the value of $\Delta y$ for $H_1$ (dashed-red) and the value of  $\Delta \hat{x}$ for $H_w$ (dashed-blue) to check that the lowest value reached by the errors was respectively given by the size of the meshes of the other two variables.

In the right side, the slope of the regression line of the optimal trading strategy of $H_w$ is $-1.23$ (5 first points, solid-blue). The lowest value reached by the error is given by the size of  $\Delta y$ in both cases as it can be checked in the experiment where we halve the value of $\Delta y$ (dashed-blue/red).

\subsubsection{Number of shares Error convergence}

In this experiment, we take the same values as the previous one for the model parameters and the approximation/computational domains. We fix $\Delta \hat{x} =6\ldotp25 \cdotp 10^{-3}$ ($N_{\hat{x}}=1600$), $\Delta t = 5 \cdot 10^{-5}$ and set $N_p=320$.

We are going to compute RMSE for $N_y=\{8,16,32,64,128,256,512\}$. Figure \ref{Figura5} shows the log-log of functions $H_j, \ j\in{1,w}$ (left side) and the optimal trading strategies (right side).

\begin{figure}[h]
\centering
\includegraphics[width=12cm,height=5 cm]{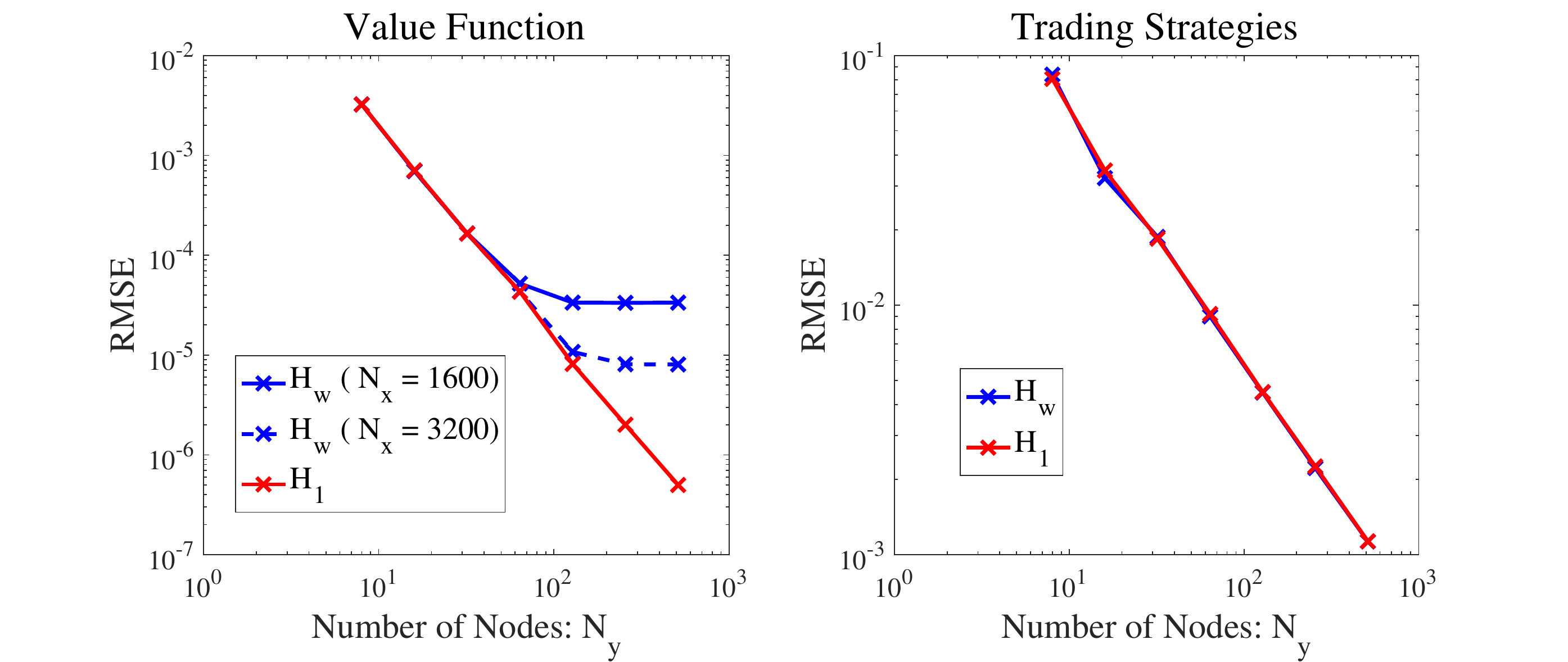}
\caption[Number of shares error convergence]{\label{Figura5} Number of shares error convergence of the value functions (left) and the optimal trading strategies (right) with $j=1$ (red) and $j=w$ (blue) in logarithmic scale.}
\end{figure}

The slope of the regression lines is $-2.11$ in the case of functions $H_j$ (red($j=1$), solid-blue($j=w$)) and $-1.01$ in the case of the optimal trading strategies (right). We carry out another experiment (dashed-blue) where $N_{\hat{x}}=3200$. This experiments shows that the lowest value of the error reached by function $H_w$ (solid-blue) was given by the size of $\Delta \hat{x}$.

Empirically, the behaviour of the computational cost has been checked to be linear in the number of time steps ($N_t$) and in the number of shares ($N_y$) and almost linear in the number of spatial nodes (theoretically $\mathscr{O}(N_{\hat{x}}\log(N_{\hat{x}}))$).

\subsubsection{Localization Error}\label{OPTCEUecleccLE}

We fix the same model parameters of the previous analysis and the same approximation domain $[L_{\min},L_{\max}]=[1,3]$. For studying the convergence of the localization error, we propose the following experiment.

The computational domain is defined by $[\hat{x}_{min},\hat{x}_{\max}]=[L_{\min}-M,L_{\max}+M]$ for $M>0$ and we define the proportion
\begin{equation*}
P(M)=\frac{L_{\max}+M-(L_{\min}-M)}{L_{\max}-L_{\min}}.
\end{equation*}

With the same values for $\Delta \hat{x}, \ \Delta y, \ \Delta t$, we compute the $\text{RMSE}$ for different values of $M$. Figure \ref{Figura6} shows the logarithm of $P(M)$ versus the logarithm of $\text{RMSE}$ for the function values and the optimal trading strategies. As it can be checked, as $M$ grows, the size of the localization error decreases as it could be expected from results of Subsection \ref{OPTCEUCSCE}. The limit error is marked by the size of $\Delta \hat{x}, \ \Delta y$ and $\Delta t$.

\begin{figure}[h]
\centering
\includegraphics[width=12cm,height=5 cm]{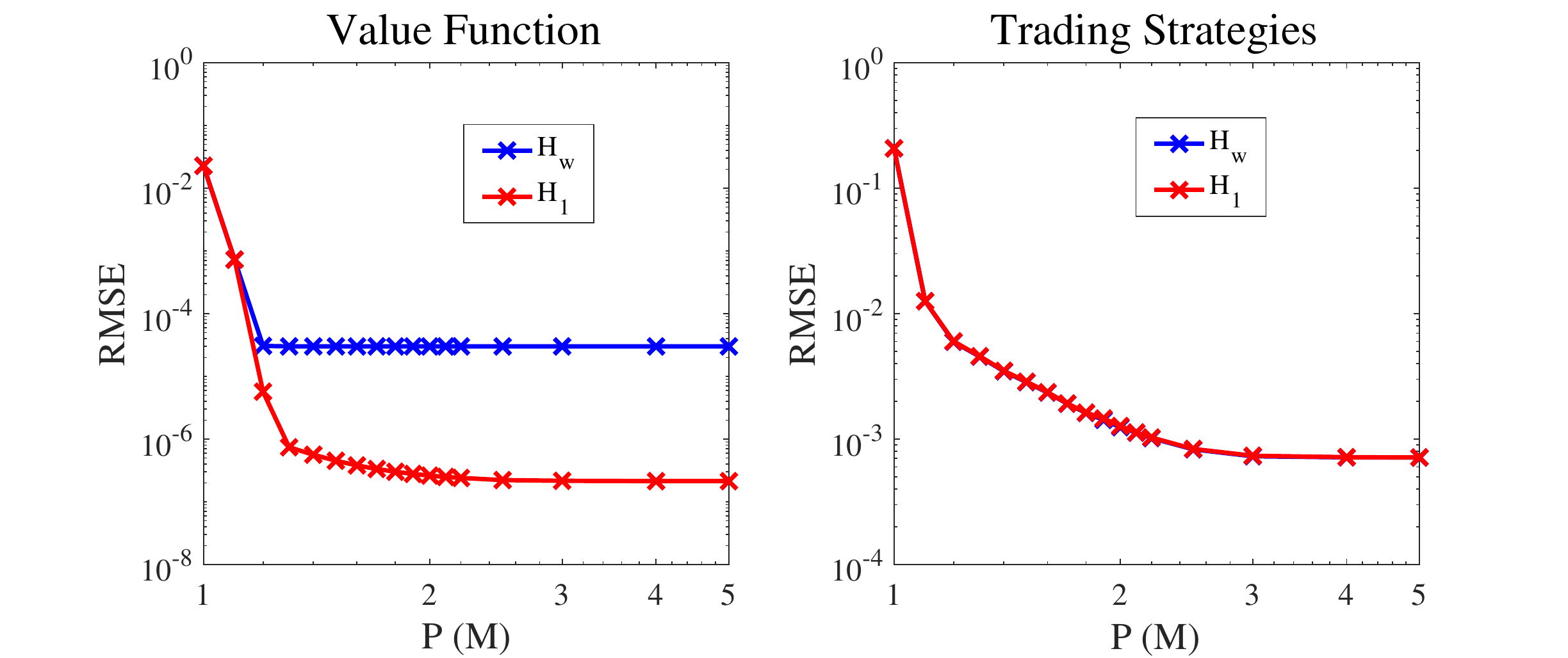}
\caption[Localization Error convergence]{\label{Figura6} Localization Error convergence. Value of $\text{RMSE}$ of the function values (left) and optimal trading strategies (right) for $j=1$ (red) and $j=w$ (blue) for different values of $P(M)$ in logarithmic scale.}
\end{figure}

To finish this Subsection, we recall that we mentioned that there was a scaling problem if we worked with the original variables. This problem has been greatly reduced with our numerical method. With the new variables, we can work with very big values for the stock and the strike ($S,K>10^4$) and also compute options very deep in the money.

\subsection{Numerical examples with transaction costs}\label{OPTCEUmdaItc}

We check now the effects of incorporating transaction costs to the pricing model. We repeat the experiments realized in \cite[Fig. 1]{Davis2}. Figure \ref{Figura7} shows the price difference for all maturities between $T\in[0,3]$, i.e. $\text{Price difference}=p_w-\text{BS},$ where $p_w$ denotes the option price with transaction costs and BS the Black-Scholes price.

\begin{figure}[h]
\centering
\includegraphics[width=7cm,height=4.5cm]{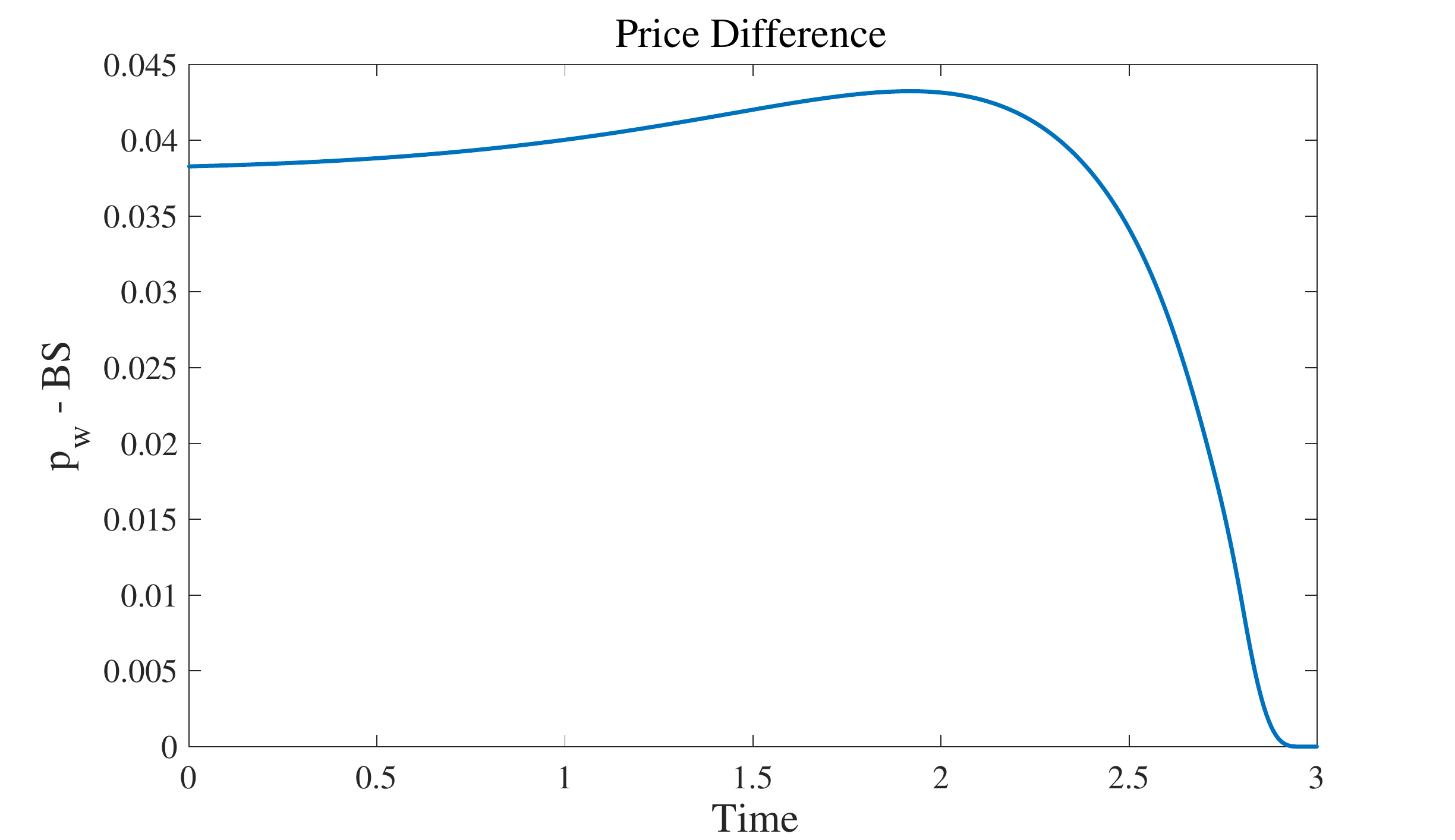}
\caption[Price difference obtained with the Pseudospectral method]{\label{Figura7} Price difference obtained with the Pseudospectral method. The results coincide with the numerical experiment in \cite{Davis2}.}
\end{figure}

We can observe that as $T\rightarrow\infty$, the price difference at $t=0$ approximates to $\lambda S$, the additional amount of money that is needed to purchase one share. This is empirically justified in \cite{Davis2} with a very natural interpretation: if maturity is big enough, it will be more likely that the option finishes In The money and it is exercised, so the seller will need to have one share.

This behaviour should repeat if we fix a maturity and compute option prices for the same strike but bigger stock prices. As we can compute now options as In The money as we want, this can be numerically checked.

\begin{figure}[h]
\centering
\includegraphics[width=7cm,height=4.5cm]{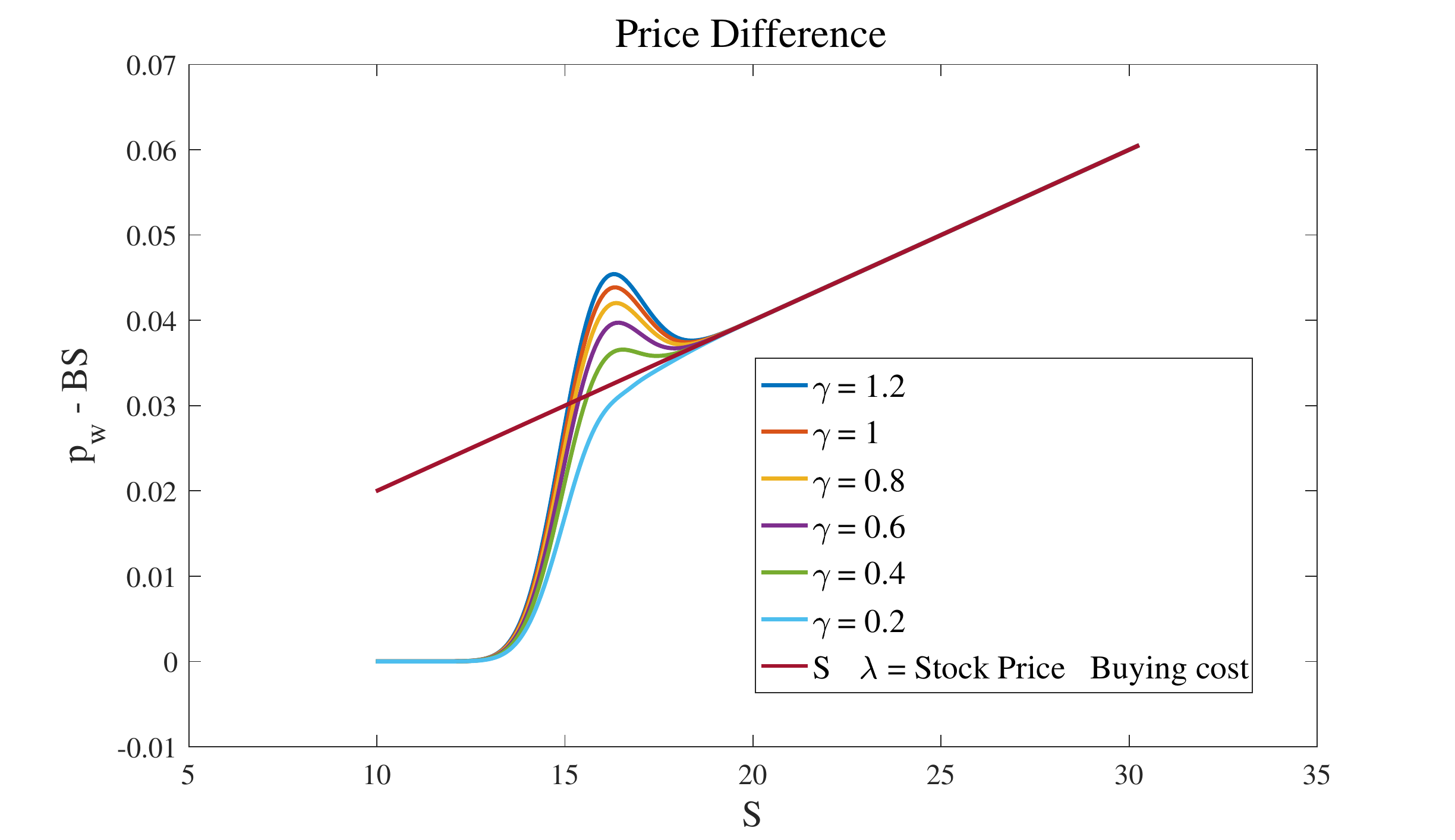}
\caption[Influence of the risk aversion parameter in the price difference]{\label{Ch4influencegamma} Influence of the risk aversion parameter in the price difference vs the stock price. }
\end{figure}

In Figure \ref{Ch4influencegamma} we see that $p_w-\text{BS}$ approximates to $\lambda S$ as $S\rightarrow \infty$. We can also check the influence of parameter $\gamma$, the index of risk aversion. As $\gamma$ grows, the seller of the option is more and more risk averse which means that he will demand more money to cover him from the possible transaction costs. , i.e. $S\rightarrow\infty \Rightarrow p_w-\text{BS}\rightarrow \lambda S.$

Concerning optimal trading strategies, it was conjectured in \cite{Davis2} that there exist two surfaces, which depend on $t$ and $S$, that lay up and below the optimal trading strategy when there were no transaction costs present. Numerical experiments seem to support this conjecture.

Other experiments were realized in \cite{Davis2} related with the ``overshoot'' ratio (OR), which is given by
\begin{equation}\label{Ch4overshootratio}
\text{OR}=\frac{(p_w-BS)-\lambda S}{\lambda S}.
\end{equation}

The evolution of OR in function of different parameter values can be studied. Some (empirical) properties of the overshoot ratio can be obtained \cite{Davis2}, as for example that the OR is linear increasing in function of $\log(\gamma)$. Figure \ref{Ch4statements1and2} represents the OR in function of $log(\gamma)$ and $S$ (left) and just in function of $S$ (right).

\begin{figure}[h]
\centering
\includegraphics[width=11cm,height=4.5cm]{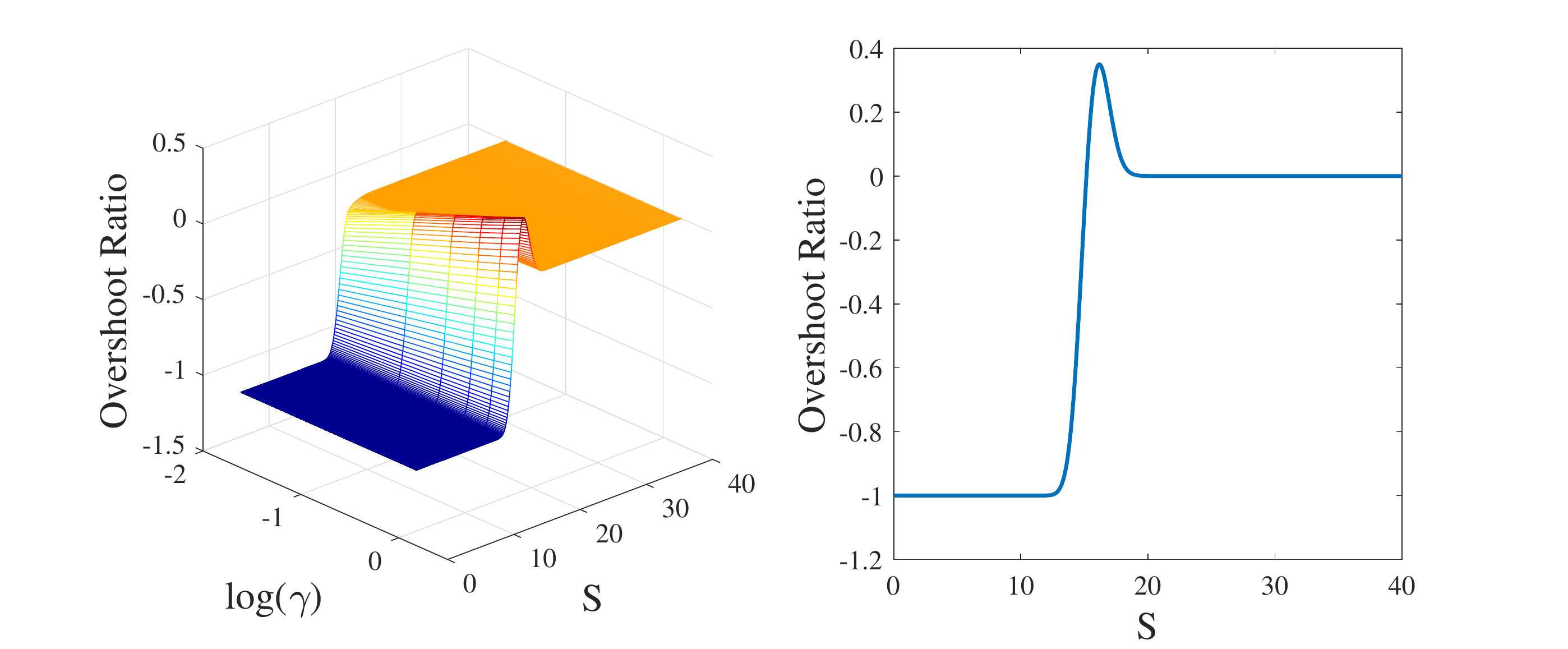}
\caption[Overshoot ratio dependance of $\log(\gamma)$ and $S$]{\label{Ch4statements1and2} Overshoot ratio dependance of $\log(\gamma)$ (left) and $S$ (right).}
\end{figure}

\newpage

\appendix
\appendixpage

\begin{proof}[Proof of Proposition \ref{Ch4equivalencia}] \

By the definition of $\mathscr{E}_{E}$, we know that the trading strategies
\begin{equation*}
\left\{
\begin{aligned}
& y^{\pi^{s}}\equiv 0, \quad j=1, \\
& y^{\pi^{s}}\equiv 1, \quad j=w,
\end{aligned}
\right.
\end{equation*}
are admissible in $\tau_{E}(X,y)$, so they are in  $\tau(X,y)$. Under these strategies, the final wealth satisfies
\begin{equation*}
W_j((T,X^{\pi^{s}}(T),y^{\pi^{s}}(T),\bar{S}(T))> - E, \quad j\in\{1,w\}.
\end{equation*}

Any trading strategy $\pi\in\tau$ that lies outside $\mathscr{E}_{E}$ for any $t\in[0,T]$, leads automatically to the residual utility $1-\exp(\gamma E)$, which is always suboptimal.

Therefore, the optimal trading strategy must belong to  $\tau_{E}(X,y)$. Consequently,
\begin{equation*}
V^{B_E}_j(t,X,y,S)=V^{\mathscr{E}_{E}}_j(t,X,y,S).
\end{equation*}
\end{proof}

\begin{proof}[Proof of Proposition \ref{Ch4Qbounded}] \

It is easy to check that functions $V^{B_E}_j, \ j\in\{1,w\}$ satisfy for $t\in[0,T]$ that:
\begin{equation*}
\begin{aligned}
V^{B_E}_j(t,X,y,S) & \geq 1-\exp(\gamma E), \quad (X,y,S)\in \mathscr{E}_{E}, \\
V^{B_E}_j(t,X,y,S) & = 1-\exp(\gamma E), \quad (X,y,S)\notin \mathscr{E}_{E},
\end{aligned}
\end{equation*}
where the first inequality is obtained by a suboptimality argument employing strategy $\pi^s$ of the previous proof and the second one comes from the definition of function $V^{B_E}_j, \ j\in\{1,w\}$.

The upper bound of Proposition \ref{Ch4Qbounded} is a consequence of formula (\ref{Ch4obtencQ}) and the previous inequalities.

By construction (see \cite[(4.22) and (4.25)]{Davis2}), function $Q^{B_{E_0,X_0}}_j, \quad j\in\{1,w\}$ is strictly positive.

\end{proof}

\begin{proof}[Proof of Theorem \ref{estabilidad}] \

By definition, we have for $j=0,1,...,2N-1$,
\small
\begin{equation*}
\begin{aligned}
\frac{\partial V^N}{\partial \tau}{(x_j,\tau)} &= A\frac{\partial^2 V^N}{\partial {x}^2}{(x_j,\tau)}+B\frac{\partial V^N}{\partial x}{(x_j,\tau)}+C\left(\frac{\partial V^N}{\partial {x}}{(x_j,\tau)}\right)^2+F^N(x_j,\tau),  \\
\frac{\partial W^N}{\partial \tau}{(x_j,\tau)} &= A\frac{\partial^2 W^N}{\partial {x}^2}{(x_j,\tau)}+B\frac{\partial W^N}{\partial x}{(x_j,\tau)}+C\left(\frac{\partial W^N}{\partial {x}}{(x_j,\tau)}\right)^2+G^N(x_j,\tau).
\end{aligned}
\end{equation*}
\normalsize

Subtracting both expressions, we obtain for $j=0,... \ ,2N-1$ and $\forall \tau\in[0,T]$:
\small
\begin{equation*}
\begin{aligned}
\frac{\partial e^N}{\partial \tau}{(x_j)}= & A\frac{\partial^2 e^N}{\partial {x}^2}{(x_j)}+B\frac{\partial e^N}{\partial x}{(x_j)}+C\left[\left(\frac{\partial V^N}{\partial {x}}{(x_j)}\right)^2-\left(\frac{\partial W^N}{\partial {x}}{(x_j)}\right)^2\right] \\ & +\left(F^N(x_j)-G^N(x_j)\right),
\end{aligned}
\end{equation*}
\normalsize
where $e^{N}=V^{N}-W^{N}$. Equivalently,
\begin{equation*}
\frac{\partial e^N}{\partial \tau}=A\frac{\partial^2 e^N}{\partial {x}^2}+B\frac{\partial e^N}{\partial x}+C\left(I_N\left[\left(\frac{\partial V^N}{\partial {x}}\right)^2-\left(\frac{\partial W^N}{\partial {x}}\right)^2\right]\right)+\left(F^N-G^N\right),
\end{equation*}
since, by definition, $e^N, \ F^N, \ G^N \in S_N$.

\

For $\phi \in S_N$, taking the scalar product of the previous expression with respect to $\phi$, we obtain:
\begin{equation*}\small
\begin{aligned}
\left(\frac{\partial e^N}{\partial \tau}, \ \phi \right) = & A\left(\frac{\partial^2 e^N}{\partial {x}^2}, \ \phi \right)+B\left(\frac{\partial e^N}{\partial x}, \ \phi \right)\\
& +C\left(I_N\left[\left(\frac{\partial V^N}{\partial {x}}{(x_j)}\right)^2-\left(\frac{\partial W^N}{\partial {x}}{(x_j)}\right)^2\right], \ \phi  \right) + \left(F^N-G^N,\phi\right).
\end{aligned}\normalsize
\end{equation*}

Taking $\phi=e^N$ and noting that the periodic boundary conditions imply that $\left(\frac{\partial e^N}{\partial x},e^N\right)=0$ and $\left(\frac{\partial^2 e^N}{\partial x^2},e^N\right) =-\left(\frac{\partial e^N}{\partial x},\frac{\partial e^N}{\partial x}\right)$ we get:
\begin{equation*}
\begin{aligned}
\frac{1}{2}\frac{d }{d\tau}||e^N(\tau)||^2 +A ||(e^N)_x(\tau)||^2 =& C\left(I_N\left[\left(\frac{\partial V^N}{\partial {x}}\right)^2-\left(\frac{\partial W^N}{\partial {x}}\right)^2\right], \ e^N  \right) \\ & + \left(F^N-G^N,e^N\right).
\end{aligned}
\end{equation*}

Since for any pair of functions $u,v\in S_{N}$, it holds $(u,v)_N=(u,v)$, where $(u,v)_N$ denotes the usual discrete scalar product (see \cite[(2.1.33)]{Canuto}), we have  \small
\begin{equation*}
\begin{aligned}
& \left|\left(I_N\left[\left(\frac{\partial V^N}{\partial {x}}\right)^2-\left(\frac{\partial W^N}{\partial {x}}\right)^2\right], \ e^N  \right)\right| \\
& \ \ \ = \left|\sum_{j=0}^{2N-1}\left[\left(\frac{\partial V^N}{\partial {x}}(x_j)\right)^2-\left(\frac{\partial W^N}{\partial {x}}(x_j)\right)^2\right] e^N(x_j) \right|  \\
& \ \ \ \leq \sum_{j=0}^{2N-1} \left|\left[\left(\frac{\partial V^N}{\partial {x}}(x_j)\right)^2-\left(\frac{\partial W^N}{\partial {x}}(x_j)\right)^2\right]\right|\left|e^N(x_j)\right| \\
& \ \ \ \leq \sum_{j=0}^{2N-1} \left|\left(\frac{\partial V^N}{\partial {x}}(x_j)\right)+\left(\frac{\partial W^N}{\partial {x}}(x_j)\right)\right| \left|\left(\frac{\partial V^N}{\partial {x}}(x_j)\right)-\left(\frac{\partial W^N}{\partial {x}}(x_j)\right)\right|\left|e^N(x_j)\right| \\
& \ \ \ \leq 2M \sum_{j=0}^{2N-1} \left|(e^N)_x(x_j)\right|\left|e^N(x_j)\right| \\
& \ \ \ \leq 2M \|(e^N)_x\|_N\|e^N\|_N=2M\|(e^N)_x\|\|e^N\|,
\end{aligned}
\end{equation*} \normalsize
where, from the hypothesis of the theorem, we have employed:
\begin{equation*}
\left\|\frac{\partial V^N}{\partial x}+\frac{\partial W^N}{\partial x}\right\|_{\infty} \leq  \left\|\frac{\partial V^N}{\partial x}\right\|_{\infty}+\left\|\frac{\partial W^N}{\partial x}\right\|_{\infty} \leq 2M,
\end{equation*}

Therefore, we can bound
\begin{equation*}
\begin{aligned}
&\left|C\left(I_N\left[\left(\frac{\partial V^N}{\partial {x}}\right)^2-\left(\frac{\partial W^N}{\partial {x}}\right)^2\right], \ e^N  \right)\right|\leq 2M|C|\|(e^N)_x\|\|e^N\|.
\end{aligned}
\end{equation*}

Using Cauchy Schwartz's inequality to bound $\left(F^N-G^N,e^N\right)$, we get
\begin{equation*}
\begin{aligned}
\frac{1}{2}\frac{d }{d\tau}||e^N(\tau)||^2 +A ||(e^N)_x(\tau)||^2 \leq & 2M|C|\|(e^N)_x(\tau)\|\|e^N(\tau)\| \\ & +\|F^N(\tau)-G^N(\tau)\|\|e^N(\tau)\|.
\end{aligned}
\end{equation*}

We apply inequality $ab\leq \left(\epsilon a^2+\frac{1}{4\epsilon}b^2\right), \ \ a,b>0$, to both terms on the right side, using respectively $\epsilon=\frac{A}{4M|C|}$ and $\epsilon=1$.
\begin{equation*}
\begin{aligned}
\frac{1}{2}\frac{d }{d\tau}||e^N(\tau)||^2 +A\|(e^N)_x(\tau)||^2 \leq & \frac{A}{2}\|(e^N)_x(\tau)\|^2+ \frac{2M^2C^2}{A}\|e^N(\tau)\|^2 \\
& +\|F^N(\tau)-G^N(\tau)\|^2+\frac{1}{4}\|e^N(\tau)\|^2,
\end{aligned}
\end{equation*}
so that
\begin{equation*}
\frac{1}{2}\frac{d }{d\tau}||e^N(\tau)||^2 +\frac{A}{2}\|(e^N)_x(\tau)\|^2\leq K \|e^N(\tau)\|^2+\|F^N(\tau)-G^N(\tau)\|^2,
\end{equation*}
where $K=\frac{2M^2C^2}{A}+\frac{1}{4}$.

\

Using Gronwall's lemma (see \cite[A.15]{Canuto}), \small
\begin{equation*}
\underset{0\leq \tau\leq T}{\max}\|e^N(\tau)\|^2+\frac{A}{2}\int_0^T\|(e^N)_x(\tau)\|^2d\tau \leq R\left(\|e^N(0)\|^2+\int_0^T\|F^N(\tau)-G^N(\tau)\|^2d\tau\right),
\end{equation*}\normalsize
with $R=\exp(KT)$.

\end{proof}

\begin{proof}[Proof of Proposition \ref{Ch4lemmainterpderiv}]

\

For any $\tau\in[0, T]$, we decompose:
\begin{equation}\label{Ch4emmainterpderbisss}
\begin{aligned}
\left\|\frac{\partial^s P_N(u)}{\partial x^s}\right\|_{\infty} &\leq \left\|\frac{\partial^s P_N(u)}{\partial x^s}-\frac{\partial^s u}{\partial x^s}\right\|_{\infty}+\left\|\frac{\partial^s u}{\partial x^s}\right\|_{\infty}, \\
\left\|\frac{\partial^s I_N(u)}{\partial x^s}\right\|_{\infty} &\leq \left\|\frac{\partial^s I_N(u)}{\partial x^s}-\frac{\partial^s u}{\partial x^s}\right\|_{\infty}+\left\|\frac{\partial^s u}{\partial x^s}\right\|_{\infty}.
\end{aligned}
\end{equation}

Inequality \cite[(A.12)]{Canuto} implies that
\begin{equation*}
\left\|\frac{\partial^s u}{\partial x^s}\right\|_{\infty} \leq C_1 \left\|\frac{\partial^{s+1} u}{\partial x^{s+1}}\right\|_{L^2},
\end{equation*}
and
\begin{equation*}
\begin{aligned}
\left\|\frac{\partial^s P_N(u)}{\partial x^s}-\frac{\partial^s u}{\partial x^s}\right\|_{\infty} & \leq C_1 \left\|\frac{\partial^s P_N(u)}{\partial x^s}-\frac{\partial^s u}{\partial x^s}\right\|_{H^1}, \\
\left\|\frac{\partial^s I_N(u)}{\partial x^s}-\frac{\partial^s u}{\partial x^s}\right\|_{\infty} &\leq C_1 \left\|\frac{\partial^s I_N(u)}{\partial x^s}-\frac{\partial^s u}{\partial x^s}\right\|_{H^1},
\end{aligned}
\end{equation*}

\

Applying \cite[(5.1.5)]{Canuto} (Bernstein's inequality), standard approximation results of projection \cite[(5.1.10)]{Canuto} and aliasing error $\left(\|I_N(u)-P_N(u)\|_{L^2}\right)$ result \cite[(5.1.18)]{Canuto}, we can bound
\begin{equation*}
\begin{aligned}
\left\|\frac{\partial^s P_N(u)}{\partial x^s}-\frac{\partial^s u}{\partial x^s}\right\|_{H^1} &\leq K_1 N^{1-r}\left\|\frac{\partial^{s+r} u}{\partial x^{s+r}}\right\|_{L^2}, \\
\left\|\frac{\partial^s I_N(u)}{\partial x^s}-\frac{\partial^s u}{\partial x^s}\right\|_{H^1} & \leq \left\|\frac{\partial^s I_N(u)}{\partial x^s}-\frac{\partial^s P_N(u)}{\partial x^s}\right\|_{H^1}+\left\|\frac{\partial^s P_N(u)}{\partial x^s}-\frac{\partial^s u}{\partial x^s}\right\|_{H^1}  \\
&\leq N^s\|I_N(u)-P_N(u)\|_{H^1}+K_1 N^{1-r}\left\|\frac{\partial^{s+r} u}{\partial x^{s+r}}\right\|_{L^2}  \\
&\leq N^{s+1}\|I_N(u)-P_N(u)\|_{L^2}+K_1 N^{1-r}\left\|\frac{\partial^{s+r} u}{\partial x^{s+r}}\right\|_{L^2}  \\
& \leq K_1N^{1-r}\left\|\frac{\partial^{s+r} u}{\partial x^{s+r}}\right\|_{L^2}+K_1 N^{1-r}\left\|\frac{\partial^{s+r} u}{\partial x^{s+r}}\right\|_{L^2} \\
&=2K_1N^{1-r}\left\|\frac{\partial^{s+r} u}{\partial x^{s+r}}\right\|_{L^2},
\end{aligned}
\end{equation*}
if $u\in H^{s+r}_p, \ r\geq 1$.

The choice of
\begin{equation*}
M=(2K_1+C_1)\underset{0\leq\tau\leq T}{\max}\left\|\frac{\partial^{s+1} u}{\partial x^{s+1}}\right\|,
\end{equation*}
completes the proof.
\end{proof}

\begin{proof}[Proof of Proposition \ref{Ch4Consistproposition}]

\

Let us define the function:
\begin{equation}\label{Ch4consisteneq1}
J^{2N}=\frac{\partial I_N(u(x,\tau))}{\partial \tau}-A\frac{\partial^2 I_N(u(x,\tau))}{\partial {x}^2}-B\frac{\partial I_N(u(x,\tau))}{\partial x}-C\left[\left(\frac{\partial I_N(u(x,\tau))}{\partial {x}}\right)^2\right],
\end{equation}
and note that:
\begin{equation*}
\begin{aligned}
J^{2N} &\in S_{2N}, \\
F^N &= I_N\left(J^{2N}\right).
\end{aligned}
\end{equation*}

The function $u(x,\tau)$ satisfies:
\begin{equation}\label{Ch4consisteneq2}
0=\frac{\partial u(x,\tau)}{\partial \tau} -A\frac{\partial^2 u(x,\tau)}{\partial {x}^2}-B\frac{\partial u(x,\tau)}{\partial x}-C\left[\left(\frac{\partial u(x,\tau)}{\partial {x}}\right)^2\right].
\end{equation}

Subtracting (\ref{Ch4consisteneq1}) and (\ref{Ch4consisteneq2}):
\begin{equation*}
J^{2N}=J^{2N}_1-AJ^{2N}_2-BJ^{2N}_3-CJ^{2N}_4,
\end{equation*}
with
\begin{equation*}
\begin{aligned}
& J^{2N}_1=\frac{\partial I_N(u(x,\tau))}{\partial \tau}-\frac{\partial u(x,\tau)}{\partial \tau}, \\
& J^{2N}_2=\frac{\partial^2 I_N(u(x,\tau))}{\partial {x}^2}-\frac{\partial^2 u(x,\tau)}{\partial {x}^2}, \\
& J^{2N}_3=\frac{\partial I_N(u(x,\tau))}{\partial x}-\frac{\partial u(x,\tau)}{\partial x}, \\
& J^{2N}_4=\left(\frac{\partial I_N(u(x,\tau))}{\partial {x}}\right)^2-\left(\frac{\partial u(x,\tau)}{\partial {x}}\right)^2,
\end{aligned}
\end{equation*}
for all $\tau \in [0,T]$.

\

The no linear term $J^{2N}_4$ is bounded by:
\begin{equation*}
\left\|\left(\frac{\partial I_N(u)}{\partial {x}}\right)^2-\left(\frac{\partial u}{\partial {x}}\right)^2\right\|  \leq \left\|\frac{\partial I_N(u)}{\partial {x}}+\frac{\partial u}{\partial {x}}\right\|_{\infty}\left\|\frac{\partial I_N(u)}{\partial {x}}-\frac{\partial u}{\partial {x}}\right\|.
\end{equation*}

From Proposition \ref{Ch4lemmainterpderiv}, it exists a constant $M_1=M_1\left(\underset{0\leq\tau\leq T}{\max}\left\|\frac{\partial^{2} u}{\partial x^{2}}\right\|\right)$ such that
\begin{equation*}
\underset{0\leq \tau \leq T}{\max}\left\{\left\|\frac{\partial I_N(u)}{\partial {x}}{(\tau)}\right\|_{\infty},\left\|\frac{\partial u}{\partial {x}}{(\tau)}\right\|_{\infty}\right\}\leq M_1,
\end{equation*}

The second term is bounded by
\begin{equation*}
\left\|u_x-\left(I_Nu\right)_x\right\|_{L^2}\leq K_1N^{-s}\left\|\frac{\partial^{s+1} u(x,\tau)}{\partial {x}^{s+1}}\right\|_{L^2},
\end{equation*}
due to the approximation result \cite[(5.1.20)]{Canuto}.

\

Therefore, the no linear term $J^{2N}_4$ is bounded by
\begin{equation*}
\left\|\left(\frac{\partial I_N(u)}{\partial {x}}\right)^2-\left(\frac{\partial u}{\partial {x}}\right)^2\right\|  \leq K_1M_1N^{-s}\left\|\frac{\partial^{s+1} u}{\partial x^{s+1}}\right\|.
\end{equation*}

Obviously, term $J^{2N}_3$ can also be bounded by \cite[(5.1.20)]{Canuto}:
\begin{equation*}
\left\|J^{2N}_3\right\|\leq K_1N^{-s}\left\|\frac{\partial^{s+1} u}{\partial {x}^{s+1}}\right\|.
\end{equation*}

Using again approximation result \cite[(5.1.9)]{Canuto}, Bernstein's inequality \cite[(5.1.5)]{Canuto} and aliasing error result \cite[(5.1.18)]{Canuto}, term $J^{2N}_2$ can be bounded by
\begin{equation*}
\begin{aligned}
\|J^{2N}_2\| & \leq  \left\|\frac{\partial^2 I_N(u)}{\partial {x}^2}-\frac{\partial^2 P_N(u)}{\partial {x}^2} \right\| + \left\|\frac{\partial^2 P_N(u(x,t))}{\partial {x}^2}-\frac{\partial^2 u(x,t)}{\partial {x}^2} \right\| \\
& \leq  N^2\left\|I_N(u)-P_N(u)\right\|+\left\|P_N(u_{xx})-u_{xx}\right\|  \\
& \leq  K_1N^{-s}\left\|\frac{\partial^{s} u_{xx}}{\partial {x}^{s}}\right\|+K_1N^{-s}\left\|\frac{\partial^{s} u_{xx}}{\partial {x}^{s}}\right\| = 2K_1N^{-s}\left\|\frac{\partial^{s} u_{xx}}{\partial {x}^{s}}\right\|.
\end{aligned}
\end{equation*}

For the last term, using \cite[(5.1.16)]{Canuto}:
\begin{equation*}
\|J^{2N}_1\| = \left\|I_N\left(\frac{\partial u(x,\tau)}{\partial \tau}\right)-\frac{\partial u(x,\tau)}{\partial \tau}\right\| \leq K_1N^{-s}\left\|\frac{\partial^{s} u_\tau}{\partial {x}^{s}}\right\|,
\end{equation*}
since interpolation does commute with derivation with respect the temporal variable.

\

Thus, depending on $\underset{0\leq \tau\leq T}{\max}\left\{ \left\|\frac{\partial^{s+1} u}{\partial x^{s+1}}\right\|, \left\|\frac{\partial^{s+2} u}{\partial x^{s+2}}\right\| , \ \left\|\frac{\partial^{s} u_{\tau}}{\partial x^{s}}\right\| \right\}$, there exists a constant $M_2\geq 0$, such that:
\begin{equation*}
\|J^{2N}\|\leq M_2N^{-s}.
\end{equation*}

Finally,  using  again Bernstein's inequality \cite[(5.1.5)]{Canuto} and the approximation result \cite[(5.1.16)]{Canuto}, since $J^{2N}\in S_{2N}$ we can bound
\begin{equation*}
\begin{aligned}
\left\|F^N\right\| &\leq \left\|F^N-J^{2N}\right\| + \left\|J^{2N}\right\| = \left\|I_N\left(J^{2N}\right)-J^{2N}\right\| + \left\|J^{2N}\right\| \\ & \leq K_1N^{-s} \left\|\frac{\partial^s J^{2N}}{\partial x^s}\right\| + \left\|J^{2N}\right\|
 \leq K_1 2^s \left\|J^{2N}\right\| + \left\|J^{2N}\right\| \\ & \leq (K_1 \cdot 2^s+1)\left\|J^{2N}\right\| \leq MN^{-s},
\end{aligned}
\end{equation*}
where $M=\left(K_1 2^{s}+1\right)M_2$.

\end{proof}

\begin{proof}[Proof of Proposition \ref{threshold}] \

Let
\begin{equation*}
M_1=\underset{0\leq\tau\leq T}{\max}\left\|(I_Nu(\tau))_x\right\|_{\infty},
\end{equation*}
which exists from Proposition \ref{Ch4lemmainterpderiv} under the regularity hypothesis of $u$.

\

For $\tau=0$ we have that $\|(u^N)_x(0)\|_{\infty}=\left\|(I_Nu(0))_x\right\|_{\infty}\leq M_1$.

\

By a continuity argument,  it must exist $\epsilon>0$  and $N_1$ big enough such that $\forall N\geq N_1$ it holds that
\begin{equation}\label{Ch4thrcondt}
\|u^N_x(\tau)\|_{\infty} \leq 2M_1, \ \ t\in[0,\epsilon].
\end{equation}

We argue by contradiction. For any $N\in\mathbb{N}$, we define:
\begin{equation*}
\epsilon_N=\underset{\tau}{\sup} \left\{0<\tau\leq T : \|u^{N}_x(s)\|_{\infty}< 2M_1, \ s\in[0,\tau]\right\}.
\end{equation*}
where it holds that $\epsilon_{N}>0$ because $u^N$ is the solution of an ODE system.

\

If (\ref{Ch4thrcondt}) does not hold, we can find a strictly increasing sequence $N_{1}, N_{2},... \rightarrow \infty$ and a strictly decreasing sequence $\epsilon_{N_1}, \epsilon_{N_2}... \rightarrow 0$ such that
\begin{equation}\label{Ch4contradiction1}
\underset{n\rightarrow\infty}{\lim} \ \underset{0\leq\tau \leq \epsilon_{N_n}}{\max} \|u^{N_n}_{x}(\tau)\|_{\infty}=2M_1.
\end{equation}

Applying Nicholsky and Bernstein inequalities,
\begin{equation*}
\begin{aligned}
\|u^{N_n}_{x}(\tau)\|_{\infty}& \leq \|u^{N_n}_{x}(\tau)-\left(I_N(u(\tau))\right)_x\|_{\infty}+\|\left(I_{N_n}u\right)_x(\tau)\|_{\infty} \\
& \leq K_1N_n^{\frac{3}{2}}\left\|u^{N_n}(\tau)-I_{N_n}u(\tau)\right\|+\|\left(I_{N_n}u\right)_x(\tau)\|_{\infty} \\
& \leq K_1N_n^{\frac{3}{2}}\left\|u^{N_n}(\tau)-I_{N_n}u(\tau)\right\|+M_1.
\end{aligned}
\end{equation*}

\

By construction, $\|u^{N_n}_x(\tau)\|\leq 2M_1, \ \tau\in[0,\epsilon_{N_n}]$,  therefore, employing the arguments used in the proof of the stability Theorem \ref{estabilidad} with $V^{N_n}=I_{N_n}u$ and $W^{N_n}=u^{N_n}$, it holds: \small
\begin{equation*}
\underset{0\leq \tau \leq \epsilon_{N_n}}{\max}\left\|u^{N_n}(\tau)-I_{N_n}u(\tau)\right\|^2\leq R\left(\|I_{N_n}(u(0))-u^{N_n}(0)\|^2+ \int_0^{\epsilon_{N_n}} \|F^{N_n}(\tau)\|^2 d\tau \right),
\end{equation*} \normalsize
where $\|I_{N_n}(u(0))-u^{N_n}(0)\|^2=0$ by definition of the collocation method and term $\|F^{N_n}(\tau)\|$ is given by (\ref{Ch4exprthconsistencia}).

\

Term $\|F^{N_n}(\tau)\|$ can be globally bounded in $[0,T]$. Therefore in $[0,\epsilon_{N_n}]$, by Proposition \ref{Ch4Consistproposition} and the regularity hypothesis over $u$
\begin{equation*}
\|F^{N_n}(\tau)\|\leq M_2N_n^{-s} \leq M_2N_n^{-2},
\end{equation*}

This implies, rearranging terms, that for any $\tau\in[0,\epsilon_{N_n}]$
\begin{equation}\label{Ch4contradiction2}
\underset{0\leq\tau \leq \epsilon_{N_n}}{\max} \|u^{N_n}_{x}(\tau)\|_{\infty} \leq K N_n^{-\frac{1}{2}}+M_1,
\end{equation}
where $K$ is a constant that depends on $M_2$ and $R$. This is a contradiction with (\ref{Ch4contradiction1}) since
\begin{equation}
\underset{n\rightarrow\infty}{\lim} \ \underset{0\leq\tau \leq \epsilon_{N_n}}{\max} \|u^{N_n}_{x}(\tau)\|_{\infty}\leq M_1<2M_1.
\end{equation}

\

Now, let $\epsilon^{*}>0$ be the maximum value for which it exists a value $N_0$ big enough such that $\forall N\geq N_0$ it holds that
\begin{equation*}
\|u^{N}_{x}(\tau)\|_{\infty} < 2M_1, \quad \tau\in[0,\epsilon^{*}].
\end{equation*}

Suppose $\epsilon^{*}<T$. We argue as before, so that
\begin{equation*}
\|u^{N}_{x}(\tau)\|_{\infty} \leq K_1N_n^{\frac{3}{2}}\left\|u^{N_n}(\tau)-I_{N_n}u(\tau)\right\|+M_1.
\end{equation*}
and noting that $\forall N\geq N_0$, by Stability Theorem \ref{estabilidad} on $[0, \epsilon^{*}]$
\begin{equation}\label{Ch4contradiction3}
\underset{0\leq\tau \leq \epsilon^{*}}{\max} \|u^{N_n}_{x}(\tau)\|_{\infty} \leq K N^{-\frac{1}{2}}+M_1,
\end{equation}

Again, by a continuity argument, there must exist $\epsilon^{*}_1>\epsilon^{*}$ and a value $N^{\epsilon^{*}}_0>$ big enough, such that $\forall N \geq N^{\epsilon^{*}}_0$ it holds that $\|(u^N)_x\|_{\infty} <  2M_1, \quad  \tau\in[0,\epsilon^{*}_1]$. Otherwise we could find a strictly increasing sequence $N^{\epsilon^{*}}_{1}, N^{\epsilon^{*}}_{2},... \rightarrow \infty$ and a strictly decreasing sequence $\epsilon_{N^{\epsilon^{*}}_1}, \epsilon_{N^{\epsilon^{*}}_2}... \rightarrow \epsilon^{*}$ such that
\begin{equation*}
\underset{n\rightarrow\infty}{\lim} \ \underset{0\leq\tau \leq \epsilon_{N^{\epsilon^{*}}_n}}{\max} \|u^{N^{\epsilon^{*}}_n}_{x}(\tau)\|_{\infty}=2M_1,
\end{equation*}
which would lead to a contradiction with (\ref{Ch4contradiction3}) exactly with same arguments as before.

\end{proof}

\begin{proof}[Proof of Theorem \ref{convergencia}]

\

We decompose:
\begin{equation*}
\|u(\tau)-u^{N}(\tau)\| \leq \|u(\tau)-I_N(u(\tau)\|+\|I_N(u(\tau))-u^{N}(\tau)\|.
\end{equation*}

The term $\|u(\tau)-I_N(u(\tau)\|$ is bounded by the estimate \cite[(5.1.16)]{Canuto}
\begin{equation*}
\underset{0\leq \tau \leq T}{\max}\|u(\tau)-I_N(u(\tau))\|\leq K_1N^{-s}\underset{0\leq \tau \leq T}{\max}\left\|\frac{\partial^s u}{\partial x^s}(\tau)\right\|.
\end{equation*}

We apply Theorem \ref{estabilidad} to the second term $\|I_N(u(\tau))-u^{N}(\tau)\|$, taking $V^N=I_N(u)$ and $W^N=u^{N}$. Note that the definition of the collocation method (\ref{Ch4fourcolmeth}) implies that $G^N\equiv 0$ and that the threshold condition (\ref{Ch4threshcondstab}) holds for $N\geq N_0$ big enough from Proposition \ref{threshold}. Therefore,
\begin{equation*}
\underset{0\leq \tau \leq T}{\max}\|I_N(u(\tau))-u^N(\tau)\|^2 \leq R\left(\|I_N(u(0))-u^N(0)\|^2+ \int_0^T \|F^N(\tau)\|^2 d\tau \right).
\end{equation*}

We apply Proposition \ref{Ch4Consistproposition} to bound
\begin{equation*}
\|F^N(\tau)\|\leq M_1N^{-s}.
\end{equation*}

For completing the proof, note that in the collocation method $u^N(0)=I_N(u_0)$. Therefore, we can bound
\begin{equation*}
\begin{aligned}
\underset {0\leq \tau \leq T}{\max} \left\{\|u(\tau)-u^{N}(\tau)\|\right\} \leq K_1N^{-s}\underset{0\leq \tau \leq T}{\max}\left\|\frac{\partial^s u}{\partial x^s}(\tau)\right\|+\sqrt{R}M_1N^{-s},
\end{aligned}
\end{equation*}
by the regularity hypothesis over $u$.

\end{proof}

\begin{proof}[Proof of Proposition \ref{penulprop}]  \

Function $f^e$ admits a classical derivative in $(0,2\pi)$
\begin{equation*}
(f^e(x))'=
\left\{
\begin{aligned}
&f'(x), && x\in \left(0,\frac{\pi}{2}\right], \\
&f'\left(\pi-x\right), && x\in \left(\frac{\pi}{2}, \pi \right], \\
&-f'\left(x-\pi\right), && x\in \left(\pi,\frac{3\pi}{2} \right], \\
&-f'\left(2\pi-x\right), && x\in \left(\frac{3\pi}{2},2\pi \right), \\
\end{aligned}
\right.
\end{equation*}
because $f'(0^{-})=0 \Rightarrow (f^e)'(\pi-\pi^{-})=-(f^e)'(\pi^{+}-\pi)=0$. It also admits a second derivative (in distributional sense)
\begin{equation*}
(f^e(x))''=
\left\{
\begin{aligned}
&f''(x), && x\in \left(0,\frac{\pi}{2}\right), \\
&-f''\left(\pi-x\right), && x\in \left(\frac{\pi}{2}, \pi \right), \\
&-f''\left(x-\pi\right), && x\in \left(\pi,\frac{3\pi}{2} \right), \\
&f''\left(2\pi-x\right), && x\in \left(\frac{3\pi}{2},2\pi \right), \\
\end{aligned}
\right.
\end{equation*}
defined everywhere but for $x=\left\{\frac{\pi}{2},\pi,\frac{3\pi}{2}\right\}$.

\

Therefore, $f^e\in H^{2}_p$ and the standard approximation result for interpolation and Proposition \ref{Ch4lemmainterpderiv} can be applied.
\end{proof}

\begin{proof}[Proof of Proposition \ref{ultimprop}] \

For $u_0=u_w(T,y_k,x)$ it is easy to check that $u_0\in H^1_p$, so let us study $u_{0,x}$. Function $u_{0,x}$ is of finite variation, derivable everywhere except at two points where it presents two jump discontinuities and which correspond to the strike value up to the odd-even extension and the change of variable.

\

In this case, we know that the truncated Fourier series $P_N(u_{0,x})$ converges pointwise to $\frac{u_{0,x}(x^{-})+u_{0,x}(x^{+})}{2}$. Therefore, it exists $C$, independent of $N$, such that $\|P_N(u_{0,x})\|_{\infty}\leq C$ (see analysis of the Gibbs effect in \cite{Canuto}).

\

We perform the decomposition
\begin{equation*}
\begin{aligned}
\|(I_N(u_0))_{x}\|_{\infty} & \leq \|(I_N(u_0))_{x}-(P_N(u_0))_x\|_{\infty} +\|(P_N(u_0))_{x}\|_{\infty} \\
& \leq K_1N^{\frac{3}{2}}\|I_N(u_0)-P_N(u_0)\|+\|P_N(u_{0,x})\|_{\infty},
\end{aligned}
\end{equation*}
where we have used Bernstein and Nicholsky inequalities and the fact that truncation does permute with differentiation.

\

Now, since $u_0\in H^1_p$ it holds that $\|I_N(u_0)-P_N(u_0)\|\leq K_2 \|I_N(u_0)-u_0\|$. Therefore, we can bound
\begin{equation*}
\|(I_N(u_0))_{x}\|_{\infty} \leq KN^{\frac{3}{2}}\|I_N(u_0)-u_0\|+C.
\end{equation*}
\end{proof}

\begin{proof}[Proof of Theorem \ref{ultimtheor}] \

 Note that it holds $\forall \hat{x}\in[-L,L]$
\begin{equation*}
R^{\hat{x}^{*}}_p(\hat{x},t)-R(\hat{x},t)=\int_{-\infty}^{\infty} \left(\phi^{\hat{x}^{*}}_p(\hat{x}')-\phi(\hat{x}')\right)\Theta(\hat{x}',\hat{x},t,t_0) d\hat{x}',
\end{equation*}
where $\Theta(\hat{x}',\hat{x},t,t_0)=\frac{1}{\sigma\sqrt{2\pi(t_0-t)}}\exp\left(\frac{-\left[\hat{x}'-(\hat{x}+\left(\alpha-\frac{\sigma^2}{2}\right)(t_0-t))\right]^2}{2\sigma^2(t_0-t)}\right)$.

\

This function can be split in :
\begin{equation*}
R^{\hat{x}^{*}}_p(\hat{x},t)-R(\hat{x},t)=\int_{-\infty}^{-\hat{x}^{*}} \left(\phi^{\hat{x}^{*}}_p(\hat{x}')-\phi(\hat{x}')\right)\Theta d\hat{x}'+\int_{\hat{x}^{*}}^{\infty} \left(\phi^{\hat{x}^{*}}_p(\hat{x}')-\phi(\hat{x}')\right) \Theta d\hat{x}',
\end{equation*}
because, by construction, $\phi^{\hat{x}^{*}}_p(\hat{x}')=\phi(\hat{x}'), \ \hat{x}'\in[-\hat{x}^{*},\hat{x}^{*}]$.

\

By Lemma \ref{Ch4Qbounded}, $0\leq\phi(\hat{x}')\leq M$, and this implies, by construction, $0\leq \phi^{\hat{x}^{*}}_p(\hat{x}')\leq 2M$. Therefore, we can bound
\begin{equation*}
\left|R^{\hat{x}^{*}}_p(\hat{x},t)-R(\hat{x},t)\right|=3M\int_{-\infty}^{-\hat{x}^{*}}\Theta d\hat{x}'+3M\int_{\hat{x}^{*}}^{\infty} \Theta d\hat{x}'.
\end{equation*}

The result of the theorem is now straightforward since it is well known (see \cite{Frutos}) that
\begin{equation*}
\begin{aligned}
\int_{-\infty}^{-\hat{x}^{*}}\Theta d\hat{x}' & \underset{-\hat{x}^{*}\rightarrow -\infty}{\longrightarrow} 0,  \\
\int_{\hat{x}^{*}}^{\infty}\Theta d\hat{x}' & \underset{\hat{x}^{*}\rightarrow \infty}{\longrightarrow} 0.
\end{aligned}
\end{equation*}

\end{proof}


\begin{thebibliography}{99}

\bibitem{Black} Black F., Scholes M., {\em The Pricing of Options and Corporate Liabilities\/}, The Journal of Political Economy, 81 (1973), 637-654.

\bibitem{BME} BME Clearing House {\em Central counterparty entity Regulations. \/}

\bibitem{Breton} Breton M. and de Frutos J., {\em Option Pricing under GARCH Processes by PDE Methods\/},  Operations Research, 58 (2010), 1148-1157.

\bibitem{Breton2} Breton M. and de Frutos J., {\em Approximation of Dynamic Programs},  in Handbook of Computational Finance, 633-649, Jin-Chuan Duan, James E. Gentle, and Wolfgang H\"{a}rdle(eds), Springer, 2012.

\bibitem{Canuto} Canuto C., Hussaini  M.Y., Quarteroni A. and Zang T.A., {\em Spectral Methods. Fundamentals in single domains\/}, Springer, Berlin, 2006.

\bibitem{Carmona} Carmona R., {\em Indifference Pricing\/}, Princeton University Press, Princeton, 2009.


\bibitem{Chiarella} Chiarella C., El-Hassan N., and Kucera A., {\em Evaluation of American option prices in a path integral
framework using Fourier-Hermite series expansion\/}, Journal of Economic Dynamics and Control, 23 (1999), 1387-1424.

\bibitem{Davis2} Davis M.H.A., Panas V.G., Zariphopoulou T., {\em European Option Pricing with transaction costs\/}, SIAM Journal of Control and Optimization, 31 (1993), 470-493.

\bibitem{Frutos} de Frutos J., {\em A Spectral Method for bonds\/}, Computers and Operations Research, 35 (2008), 64-75.

\bibitem{Frutos2} de Frutos J., Garc\'ia-Archilla B., Novo J., {\em A postprocessed Galerkin method with Chebyshev or Legendre polynomials\/}, Numerische Mathematik, 86 (2000), 419-442.

\bibitem{FrutosGaton1} de Frutos J., Gat\'{o}n V., {\em A spectral method for an Optimal Investment problem with transaction costs under Potential Utility\/}, Journal of Computational and Applied Mathematics, 319 (2017), 262-276.

\bibitem{FrutosGaton2} de Frutos J., Gat\'{o}n V., {\em Chebyshev reduced basis function applied to option valuation\/}, Computational Management Science, 14 (2017), 465-491.


\bibitem{Karatzas2} Karatzas I. {\em Optimisation problems in the theory of continuous trading\/}, SIAM J. Control Optim., 27 (1989), pp. 1221-1259.

\bibitem{Magill} Magill M.J.P., Constatinides G.M., {\em Portfolio selection with transaction costs\/}, Journal of Economic Theory, 13 (1976), 245-263.

\bibitem{Soner} Soner H.M., Shreve S.E., Cvitani\'{c} J. {\em There is no nontrivial hedging portfolio for option pricing with transaction costs\/}, The Annals of Applied Probability 5 (1995), 2, 327-355.

\bibitem{Oosterlee} Zhang, B. and Oosterlee, C. W.{\em Pricing of early-exercise Asian options under L\'{e}vy processes based on Fourier cosine expansions\/}, Appl. Numer. Math., 78 (2014), 14-30.

\bibitem{Zhu} Zhu H. {\em Characacterization of variational inequalities in singular control\/}, Ph.D. thesis (1991), Brown University, Providence, RI.


\end{thebibliography}
\end{document}